\numberwithin{equation}{section}
\newtheorem{thm}{Theorem}[section]
\newtheorem{cor}[thm]{Corollary}
\newtheorem{prop}[thm]{Proposition}
\newtheorem{lem}[thm]{Lemma}
\theoremstyle{definition}
\newtheorem{defn}[thm]{Definition}
\theoremstyle{remark}
\newtheorem{rmk}[thm]{Remark}
\newtheorem{exam}[thm]{Example}
\newtheorem{axiom}[thm]{Axiom}
\def\co{\colon\thinspace}
\newcommand{\mb}[1]{\mathbb{#1}}
\newcommand{\mf}[1]{\mathfrak{#1}}
\newcommand{\Hom}{\ensuremath{{\rm Hom}}}
\newcommand{\colim}{\ensuremath{\mathop{\rm colim}}}
\newcommand{\hocolim}{\ensuremath{\mathop{\rm hocolim}}}
\newcommand{\holim}{\ensuremath{\mathop{\rm holim}}}
\newcommand{\overto}{\mathop\rightarrow}
\newcommand{\overfrom}{\mathop\leftarrow}
\newcommand{\Map}{\ensuremath{{\rm Map}}}
\newcommand{\Sp}{{\cal S}p}
\newcommand{\End}{{\rm End}}
\newcommand{\Aut}{{\rm Aut}}
\newcommand{\proP}{{\rm pro}\mbox{-}}
\newcommand{\fibr}{\twoheadrightarrow}
\newcommand{\cofb}{\mathop\rightarrowtail}
\newcommand{\VMap}{{\cal V}\mbox{-}{\rm Map}}
\newcommand{\VEnd}{{\cal V}\mbox{-}{\rm End}}
\newcommand{\VMon}{{\cal V}\mbox{-}{\rm Mon}}
\newcommand{\smsh}[1]{\ensuremath{\mathop{\wedge}_{#1}}}
\newcommand{\comp}[1]{\ensuremath{#1^\wedge}}
\title{Commutative ring objects in pro-categories and generalized Moore
  spectra}
\author{Daniel G. Davis, Tyler Lawson\thanks{Partially supported by NSF
    grant 0805833 and a fellowship from the Sloan Foundation.}}
\begin{document}
\maketitle

\begin{abstract}
  We develop a rigidity criterion to show that in simplicial model
  categories with a compatible symmetric monoidal structure, operad
  structures can be automatically lifted along certain maps.  This is
  applied to obtain an unpublished result of M. J. Hopkins that
  certain towers of generalized Moore spectra, closely related to the
  $K(n)$-local sphere, are $E_\infty$-algebras in the category of
  pro-spectra.  In addition, we show that Adams resolutions
  automatically satisfy the above rigidity criterion.  In order to
  carry this out we develop the concept of an operadic model category,
  whose objects have homotopically tractable endomorphism operads.
\end{abstract}

\section{Introduction}
\label{sec:intro}

One of the canonical facts that distinguishes stable homotopy theory
from algebra is the fact that the mod $2$ Moore spectrum does not
admit a multiplication.  There are numerous consequences and
generalizations of this fact: there is no Smith-Toda complex $V(1)$ at
the prime $2$; the Smith-Toda complex $V(1)$ does not admit a
multiplication at the prime $3$; the mod $4$ Moore spectrum admits no
multiplication which is either associative or commutative; the mod $p$
Moore spectrum admits the structure of an $A(p-1)$-algebra but not an
$A(p)$-algebra; and so on.  (A discussion of the literature on
multiplicative properties of Moore spectra can be found in
\cite[A.6]{thomason-ktheory-etale}, while multiplicative properties of
$V(1)$ can be found in \cite{oka-ringcells}.  The higher structure on
Moore spectra plays an important role in \cite{schwede-rigid}.)

These facts and others form a perpetual sequence of obstructions to the
existence of strict multiplications on generalized Moore spectra, and
it appears to be the case that essentially no generalized Moore
spectrum admits the structure of an $E_\infty$-algebra.

Despite this, the goal of the current paper is to show the following:

{\it For any prime $p$ and any $n \geq 1$, let $\{M_I\}_I$ be a tower
  of generalized Moore spectra of type $n$, with homotopy limit the
  $p$-complete sphere} ({\it as in}
\cite[4.22]{hovey-strickland-moravaktheory}). {\it Then $\{M_I\}_I$
  admits the structure of an $E_\infty$-algebra in the category of
  pro-spectra.}

Roughly, the multiplicative obstructions vanish when taking the
inverse system as a whole (by analogy with the inverse system of
neighborhoods of the identity in a topological group).

This statement is due to Mike Hopkins, and it is referenced in
\cite[5.4.2]{rognes-galois}.  Mark Behrens gave a proof that the tower
admits an $H_\infty$ structure, based on Hopkins' unpublished
argument, in \cite{mark-hinfty}.  As discussed in
\cite[2.7]{ausoni-rognes-fractionfield}, Ausoni, Richter, and Rognes
worked out a version of Hopkins' statement for the pro-spectrum
$\{ku/p^\nu\}_{\nu \geq 1}$ for any prime $p$ as an object in the
category of pro-$ku$-modules. (Here, $ku$ is the connective complex
$K$-theory spectrum.)

It has been understood for some time that the $K(n)$-local category
should, in some sense, be a category with some pro-structure.  For
example, by \cite[\S 2]{hovey-splittingconjecture}, if $X$ is any
spectrum, then
\[
L_{K(n)}(X) \simeq \holim_I \,(L_nX \wedge M_I)\co
\]
the $K(n)$-localization of $X$ is the homotopy limit of the levelwise
smash product in pro-spectra of $L_nX$ with the tower $\{M_I\}_I$.  In
applications the Morava $E$-theory homology theory $E(k,
\Gamma)_\ast(-)$, as defined below, is often replaced by the more
tractable completed theory which again involves smashing with the
pro-spectrum $\{M_I\}_I$:
\[
E(k, \Gamma)^\vee_\ast(X) = \pi_\ast\bigl(L_{K(n)}(E(k, \Gamma) \wedge X)\bigr) 
\cong \pi_\ast\bigl(\holim_I\,(E(k, \Gamma) \wedge X \wedge M_I)\bigr).
\]
(For example, see \cite[\S 2]{goerss-henn-mahowald-rezk},
\cite{hovey-etheory}, and \cite{rezk-logarithmic}.)  Thus, in some
sense our goal is to establish appropriate multiplicative properties
of this procedure.

We give several applications of our results.  Let $n \geq 1$ and let
$p$ be a fixed prime.  As in \cite{rezk-notes-on-hopkins-miller}, let
${\cal{FG}}$ be the category that consists of pairs $(k, \Gamma)$,
where $k$ is any perfect field of characteristic $p$ and $\Gamma$ is a
height $n$ formal group law over $k$.  The morphisms are pairs $(r, f)
\co (k, \Gamma) \to (k', \Gamma')$, where $r \co k' \to k$ is a ring
homomorphism and $f \co \Gamma \to r^\ast(\Gamma')$ is an isomorphism
of formal group laws.

By \cite{goerss-hopkins-summary} (see also
\cite[2.7]{goerss-presheaves}), the Goerss-Hopkins-Miller Theorem
says that there is a presheaf
\[
E \co {\cal{FG}}^{op} \to \Sp_{\scriptscriptstyle{E_\infty}}, \ \ \ (k, \Gamma) \mapsto E(k, \Gamma),
\]
where $\Sp_{\scriptscriptstyle{E_\infty}}$ is the category of
commutative symmetric ring spectra and
\[
E(k, \Gamma)_\ast \cong W(k)\llbracket u_1, ..., u_{n-1} \rrbracket
[u^{\pm 1}].
\] 
Here $W(k)$ is the ring of Witt vectors of the field $k$, each $u_i$
has degree zero, and the degree of $u$ is $-2$. The $E_\infty$-algebra
$E(k, \Gamma)$ is a {\em Morava $E$-theory}, whose formal group law is
a universal deformation of $\Gamma$.  In Section \ref{sec:mainthm} we
show that each $E(k, \Gamma)$ lifts to the $E_\infty$-algebra $\{E(k,
\Gamma) \wedge M_I\}_I$ in the category of pro-spectra.

Also, in Section~\ref{sec:completions} we show
that various completions that are commonly employed in homotopy theory
also have highly multiplicative structures.  In particular, these
include classical Adams resolutions.  This has the amusing consequence
that, in homotopy theory, the completion of a commutative ring object
with respect to a very weak notion of an ideal (whose quotient is only
assumed to have a left-unital binary operation) automatically inherits
a commutative ring structure.

It should be noted that some care is required in the definition of an
$E_\infty$-algebra structure when working with pro-objects.  In this
paper, we use a definition in terms of endomorphism operads in
simplicial sets; an $E_\infty$-algebra structure is a map from an
$E_\infty$-operad to the endomorphism operad of the pro-object.  If $X
= \{x_\alpha\}_\alpha$ is a pro-object, note that this does {\em not}
define maps of pro-objects
\[
\{(E\Sigma_n)_+ \mathop\wedge_{\Sigma_n} (x_\alpha)^{\wedge
  n}\}_\alpha \to X.
\]
Roughly, the issue is that the levelwise smash product only
commutes with finite colimits in the pro-category.  In particular, it
does not represent the tensor of pro-objects with spaces
\cite[\S 4.1]{isaksen-strict}.

The starting point for the proof that Moore towers admit $E_\infty$
structures is the following algebraic observation.
\begin{prop}
  Suppose that $R$ is a commutative ring, $S$ is an $R$-module, and $e
  \in S$ is an element such that the evaluation map $\Hom_R(S,S) \to
  S$ is an isomorphism.  Then $S$ admits a unique binary
  multiplication such that $e^2 = e$, and under this multiplication
  $S$ becomes a commutative $R$-algebra with unit $e$.
\end{prop}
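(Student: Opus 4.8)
The plan is to convert elements of $S$ into $R$-linear endomorphisms using the evaluation isomorphism and then let those endomorphisms act. Write $\phi \co \Hom_R(S,S) \to S$ for the given map $f \mapsto f(e)$; it is automatically $R$-linear (as $R$ is commutative), so $\phi^{-1}$ is $R$-linear as well. For $a \in S$ set $f_a := \phi^{-1}(a)$, the unique $R$-linear endomorphism of $S$ with $f_a(e) = a$, and define
\[
a \cdot b := f_a(b).
\]
Because $a \mapsto f_a$ is $R$-linear and every $f_a$ is $R$-linear, this product is $R$-bilinear and descends to a map $S \otimes_R S \to S$.

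Next I would verify the algebra axioms, each of which reduces to the injectivity of $\phi$ (the uniqueness clause in the definition of $f_a$). For the unit: $\mathrm{id}_S$ is $R$-linear and fixes $e$, so $f_e = \mathrm{id}_S$ and $e \cdot b = b$; and $a \cdot e = f_a(e) = a$ by construction, so $e$ is a two-sided unit, in particular $e^2 = e$. For associativity: $f_a \circ f_b$ is $R$-linear and sends $e$ to $f_a(b) = a\cdot b$, hence $f_{a\cdot b} = f_a \circ f_b$, so $(a\cdot b)\cdot c = (f_a\circ f_b)(c) = f_a(b\cdot c) = a\cdot(b\cdot c)$. For uniqueness: if $\ast$ is any $R$-bilinear product on $S$ with two-sided unit $e$, then $x \mapsto a\ast x$ is an $R$-linear endomorphism sending $e$ to $a$, hence equals $f_a$, so $a \ast b = f_a(b) = a\cdot b$; thus $\cdot$ is the only possibility.

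The one step that is not purely formal --- and the one I expect to be the crux --- is commutativity. The trick is to observe that, for fixed $b$, right multiplication $a \mapsto a\cdot b = f_a(b)$ is itself an $R$-linear endomorphism of $S$, and it sends $e$ to $e\cdot b = b$; by the same uniqueness it must therefore coincide with $f_b$, i.e.\ with left multiplication by $b$, so $a\cdot b = f_b(a) = b\cdot a$. (Restated, the argument forces the $R$-algebra $\Hom_R(S,S)$ to be commutative, with $\phi$ a ring isomorphism onto $(S,\cdot)$.) Beyond this observation every assertion is a one-line consequence of ``$\phi$ is an $R$-linear bijection,'' so I do not anticipate any real difficulty.
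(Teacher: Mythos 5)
Your argument is correct and is essentially the paper's own proof in curried form: the paper iterates the adjunction $\Hom_R(S^{\otimes_R n},S)\cong\Hom_R(S^{\otimes_R (n-1)},\Hom_R(S,S))$ to see that an $R$-multilinear map into $S$ is determined by its value on $e^{\otimes n}$, and your bookkeeping with $f_a=\phi^{-1}(a)$, together with the identities $f_e=\mathrm{id}_S$ and $f_{a\cdot b}=f_a\circ f_b$, is exactly the element-level version of this for $n\le 3$. The one shortfall is in the uniqueness clause: the proposition asserts uniqueness among $R$-bilinear multiplications satisfying only $e^2=e$, whereas your uniqueness argument uses $a\ast e=a$, i.e.\ presupposes that $e$ is a (right) unit for the competing product $\ast$. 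This is repaired in one line by your own device: if $\ast$ is bilinear with $e\ast e=e$, then $x\mapsto e\ast x$ and $x\mapsto x\ast e$ are $R$-linear endomorphisms of $S$ fixing $e$, hence both equal $\mathrm{id}_S$ by the injectivity of $\phi$; thus $e$ is automatically a two-sided unit for $\ast$, and your argument then yields $\ast=\cdot$.
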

The proof consists of iteratively applying the adjunction
\[
\Hom_R(S^{\otimes_R n}, S) \cong \Hom_R(S^{\otimes_R (n-1)}, \Hom_R(S,S))
\]
to show that a map $S^{\otimes_R n} \to S$ is equivalent to a choice of
image of $e^{\otimes n}$; existence shows that $e\otimes e \mapsto e$
determines a binary multiplication, and uniqueness forces the
commutativity, associativity, and unitality properties.  In
particular, this applies whenever $S$ is the localization of a
quotient of $R$.  One notes that, while this proof only requires
studying maps $S^{\otimes_R n} \to S$ for $n \leq 3$, it is implicitly
an operadic proof.

This proof almost carries through when $S$ is the completion of $R$
with respect to an ideal $\mf m$.  However, in this case the topology
on $\comp{R\mspace{1mu}}_{\mf m}$ needs to be taken into account.  The
tensor product over $R$ needs to be replaced by a completed tensor
product of inverse limits of modules, which does not have a right
adjoint in general.  However, when restricted to objects which are
inverse limits of finitely presented modules, smallness gives the
completed tensor product a right adjoint (cf. \cite[B.3]{bauer-formal}). 

The paper follows roughly this line of argument, mixed with the
homotopy theory of pro-objects developed by Isaksen and Fausk
\cite{isaksen-strict,fausk-isaksen-tmodel}.

Unfortunately, the ``levelwise'' tensor product for pro-objects does
not usually have a right adjoint.  This means that the constructions of
model categories of rings and modules, from \cite{shipley-schwede} and
\cite[\S 4]{hovey-modelcategories}, do not apply in this circumstance.
Understanding these homotopical categories should be a topic worth
further investigation.

\subsection*{Outline}

We summarize the portions of this paper not previously described.  Our
work begins in Section~\ref{sec:operads} by collecting definitions and
results on the homotopy theory of operads and spaces of operad
structures on objects.  A more detailed outline is at the beginning of
that section.

In Section~\ref{sec:rigid} we flesh out the proof outlined in the
introduction.  In model categories with amenable symmetric monoidal
structure, as well as a weak variant of internal function objects,
certain ``rigid'' maps automatically allow one to lift algebra
structures uniquely from the domain to the target.

Section~\ref{sec:pro-objects} assembles together enough of the
homotopy theory of pro-objects to show that pro-dualizable objects
behave well with respect to a weak function object, allowing the
results of Section~\ref{sec:rigid} to be applied.  To obtain the main
results of this section, we place several strong assumptions on the
behavior of filtered colimits with respect to the homotopy theory.  In
particular, we require that filtered colimits represent homotopy
colimits and preserve both fibrations and finite limits.  The
main reason for restricting to this circumstance is that we need to
gain homotopical control over function spaces of the form $\lim_\beta
\colim_\alpha \Map(x_\alpha, y_\beta)$, as well as other function
objects.  (Functors such as $\Map(-,y)$ are rarely assumed to have good
behavior on towers of fibrations.)

Section~\ref{sec:modelcats} verifies all these necessary assumptions
in the case of modules over a commutative symmetric ring spectrum.
(The category of modules over a commutative differential graded
algebra is Quillen equivalent to such a category.)

The main result of the paper appears in Section~\ref{sec:mainthm},
which shows (Theorem \ref{thm:mainthm}) 
that a tower of generalized Moore spectra (constructed by
Hovey and Strickland based on previous work of Devinatz, Hopkins, and
Smith) automatically obtains an $E_\infty$-algebra structure from the
sphere.  This is then applied to show that certain chromatic
localizations of the sphere, as well as all the Morava $E$-theories 
$E(k, \Gamma)$, 
are naturally inverse limits of highly multiplicative pro-objects.

Section~\ref{sec:completions} carries out the aforementioned study of
multiplicative structure on completions.

\subsection*{Notation and assumptions}

As various model categories of pro-objects are very large and do not
come equipped with functorial factorization, there are set-theoretic
technicalities.  These include being able to define either derived
functors or a homotopy category with the same underlying object set.
We refer the reader to \cite{dwyer-hirschhorn-kan-smith-modelcats}
(e.g. \S 8) for one solution, which involves employing a larger
universe in which one constructs equivalence relations and produces
canonical definitions which can be made naturally equivalent to
constructions in the smaller universe.

For a functor $F$ with source a model category, the symbol $\mb LF$
(resp. $\mb RF$) will be used to denote the derived functor, with
domain the homotopy category of cofibrant-fibrant objects, when $F$
takes acyclic cofibrations (resp. acyclic fibrations) to weak
equivalences.  For inline operators such as $\otimes$, this will be replaced
by a superscript.  We use $[-,-]$ to denote the set of maps in the
homotopy category.

The generic symbol $\owedge$ denotes a monoidal product, while
$\otimes$ is reserved for actual tensor products and categories
tensored over simplicial sets.

For a pro-object $X$, we will often write an isomorphic diagram
using lowercase symbols $\{x_\alpha\}_\alpha$ without comment.

\subsection*{Acknowledgements}

The authors are indebted to Mark Behrens, for sharing his preprint
\cite{mark-hinfty} and for discussions related to the commutativity of
the Moore tower, and to Mike Hopkins for the ubiquitous influence of
his ideas.  The authors would also like to thank Bill Dwyer, Mark
Hovey, Birgit Richter, Marcy Robertson, John Rognes, and Jeff Smith
for discussions related to this paper.

\section{Operads}
\label{sec:operads}

In this section, we will discuss some background relating to operads
and their actions.  In essence, we would like to establish situations
where we have a model category ${\cal D}$ supporting enough structure
so that objects of ${\cal D}$ have endomorphism operads, and we would
like to ensure that these endomorphism operads are invariant under
both weak equivalences and appropriate Quillen equivalences.

This requires us to dig our way through several layers of terminology.

Endomorphism objects are functorial under isomorphisms.  Our goal is
to produce ``derived'' endomorphism objects which are functorial under
weak equivalences.  While our attention is turned towards endomorphism
operads, the methods apply when we have a very general enriching
category ${\cal V}$.  We give a functorial construction of derived
endomorphism objects in ${\cal V}$-monoids, which mostly relies on an
SM7 axiom, in Section~\ref{sec:enrich-endom}.  As a side benefit, we
obtain a definition of endomorphism objects for diagrams which will
prove necessary later.

We then turn our attention to the construction of endomorphism
operads.  By its very nature, this requires our category to carry a
symmetric monoidal structure, a model structure, and an enrichment in
spaces, and all of these must obey compatibility rules.  This presents
us with a significant number of adjectives to juggle.  We study this
compatibility in Section~\ref{sec:tens-model-categ}, finally encoding
it in the notion of an {\em operadic model category}.

The motivation for operadic model categories is the ability to extend
our enrichment, from simplicial sets under cartesian product to
symmetric sequences under the composition product.  The work of
Section~\ref{sec:enrich-endom} then produces derived endomorphism
operads.  To ensure that these constructions make sense in homotopy
theory, we show that they are invariant under an appropriate notion
of operadic Quillen equivalence.

Once this is in place, in Section~\ref{sec:spac-algebra-struct} we are
able to study a space parametrizing ${\cal O}$-algebra structures on a
fixed object, and be assured that if ${\cal O}$ is cofibrant it is an
invariant under equivalences of the homotopy type and equivalences
of the model category.

In this paper, operads are assumed to have symmetric group actions,
and no assumptions are placed on degrees $0$ or $1$.  We will write
${\rm Com}$ for the commutative operad, which is terminal among
simplicial operads and consists of a single point in each degree.

Both the definitions and the philosophy here draw heavily from
\cite{rezk-thesis}.

\subsection{Enriched endomorphisms}
\label{sec:enrich-endom}

In this section we assume that ${\cal V}$ is a monoidal category with
a model structure, and that ${\cal D}$ is a model category with a
${\cal V}$-enriched structure.  For $a, b \in {\cal D}$ we write
$\VMap_{\cal D}(a,b)$ for the enriched mapping object.

We assume that the following standard axiom holds.
\begin{axiom}[SM7]
Given a cofibration $i\co a \cofb b$ and a fibration $p\co x \fibr y$ in ${\cal
D}$, the map
\[
\VMap_{\cal D}(b,x) \to \VMap_{\cal D}(a,x) \mathop\times_{\VMap_{\cal
    D}(a,y)} \VMap_{\cal D}(b,y)
\]
is a fibration in ${\cal V}$, which is acyclic if either $i$ or $p$ is.
\end{axiom}

Write $\VMon$ for the category of monoids in ${\cal V}$.  For an
object $c \in {\cal D}$, we have a ${\cal V}$-endomorphism object
$\VEnd_{\cal D}(c) \in \VMon$.

\begin{defn}
A map in $\VMon$ is a {\em fibration} or a {\em weak
  equivalence} if the underlying map is a fibration or weak
equivalence in the category ${\cal V}$.
\end{defn}

This definition may or may not come from a model structure on the
category of ${\cal V}$-monoids.  However, under amenable circumstances
it makes sense to form the localization of $\VMon$ with respect to the
weak equivalences.


Our goal is to prove that endomorphism objects are functorial in weak
equivalences, at least on the level of homotopy categories
(Theorem~\ref{thm:homotopy-end}).  To construct this functor, it is
useful to first note that the subcategory of isomorphisms in the
homotopy category of ${\cal D}$ is naturally equivalent to a category
formed by a restricted localization.
\begin{lem}
\label{lem:invert-cofibrations}
  Let ${\cal M}$ be a model category and ${\cal A} \subset {\cal M}$
  be the subcategory of acyclic fibrations between cofibrant-fibrant
  objects.  Then the natural functor
\[
  {\cal A}^{-1} {\cal A} \to ho({\cal M})^w,
\]
from the groupoid completion to the subcategory of isomorphisms in the
homotopy category of ${\cal M}$, is fully faithful and essentially
surjective.
\end{lem}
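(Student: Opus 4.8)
The plan is to exhibit a functor $\Phi \co ho({\cal M})^w \to {\cal A}^{-1}{\cal A}$ and check it is inverse (up to natural isomorphism) to the natural functor $\Psi \co {\cal A}^{-1}{\cal A} \to ho({\cal M})^w$. First I would record the essential surjectivity: every object of $ho({\cal M})^w$ is isomorphic to a cofibrant-fibrant one, and such an object is visibly in the image of $\Psi$, so essential surjectivity is immediate. The real content is full faithfulness.

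For the key construction, recall the standard fact that between cofibrant-fibrant objects a weak equivalence $f \co X \to Y$ factors, after choosing a functorial-enough path object or by the usual mapping-cylinder argument, as an acyclic cofibration followed by an acyclic fibration; moreover by Brown's lemma any weak equivalence between cofibrant-fibrant objects can be written as a zig-zag using only acyclic fibrations. Concretely, given a weak equivalence $X \to Y$, choose a factorization $X \cofb Z \fibr Y$ with $Z$ cofibrant-fibrant; the acyclic cofibration $X \cofb Z$ admits a retraction $Z \fibr X$ which is an acyclic fibration (using that $X$ is fibrant and $Z$ cofibrant, lifting in the square with $X \cofb Z$ against $X \fibr *$). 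This presents every isomorphism in $ho({\cal M})^w$ as the image under $\Psi$ of a length-two zig-zag $X \overfrom Z \overto Y$ of morphisms in ${\cal A}$, i.e.\ of a morphism in ${\cal A}^{-1}{\cal A}$. That handles surjectivity on Hom-sets (fullness).

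For injectivity on Hom-sets (faithfulness), I would argue that any two zig-zags in ${\cal A}$ with the same image in $ho({\cal M})$ are already identified in the groupoid completion ${\cal A}^{-1}{\cal A}$. The mechanism is a calculus-of-fractions style argument: two parallel morphisms in ${\cal A}^{-1}{\cal A}$ agree iff their representing zig-zags can be connected by a finite chain of elementary moves (inserting/deleting a morphism of ${\cal A}$ and its formal inverse, or replacing a composable pair by its composite), and two zig-zags of acyclic fibrations between cofibrant-fibrant objects that become equal in $ho({\cal M})$ are so connected --- one builds the connecting data by lifting against the acyclic fibrations, exactly as in the proof that the homotopy category of a model category is computed by the localization at weak equivalences restricted to cofibrant-fibrant objects. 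The point is that on cofibrant-fibrant objects ``weak equivalence'' and ``simplicial/left-and-right homotopy equivalence'' coincide, so the homotopy relation used to define $ho({\cal M})$ is detected by maps one can compare against the acyclic fibrations in the zig-zag.

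The main obstacle I expect is the faithfulness step: proving that two zig-zags with equal image in $ho({\cal M})$ are connected \emph{within} ${\cal A}^{-1}{\cal A}$, rather than in the larger localization allowing all weak equivalences. This requires knowing that the subcategory ${\cal A}$ of acyclic fibrations admits a (right) calculus of fractions up to homotopy, or equivalently that every homotopy between maps of cofibrant-fibrant objects can be realized compatibly with a choice of acyclic-fibration resolutions; the cleanest route is to fix, once and for all, a functorial path object on cofibrant-fibrant objects and push all homotopies through it, then observe that the resulting cylinder/path objects sit in ${\cal A}$ over the endpoints. I would structure the write-up around that fixed choice of path object and reduce everything else to formal manipulation of fractions.
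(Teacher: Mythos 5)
Your zig-zag for fullness is not actually a morphism of ${\cal A}^{-1}{\cal A}$: when you factor a weak equivalence $x \to y$ as $x \cofb z \fibr y$, the forward leg $z \fibr y$ is indeed an acyclic fibration between cofibrant-fibrant objects, but the retraction $r \co z \to x$ you produce by lifting $x \cofb z$ against $x \fibr \ast$ is only a weak equivalence with $r i = \mathrm{id}$; nothing in that construction makes $r$ a fibration, so $r$ need not lie in ${\cal A}$ and cannot be formally inverted there. The standard repair --- and what the paper does --- is to factor a graph map instead: factor $x \to x \times y$ (in the paper, $x \to x \times \prod_f y$ over all weak equivalences $f$ at once) as an acyclic cofibration $x \cofb \tilde x$ followed by a fibration; then both composites $\tilde x \fibr x$ and $\tilde x \fibr y$ are acyclic fibrations between cofibrant-fibrant objects, and $f$ is realized by the inverse of $\tilde x \fibr x$ followed by $\tilde x \fibr y$, with the backward leg independent of $f$ (a fact the paper then exploits).

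Faithfulness, which you correctly flag as the crux, is not proved: you reduce it to the assertion that ${\cal A}$ admits a calculus of fractions up to homotopy, i.e.\ that zig-zags with equal image in $ho({\cal M})$ can be connected by elementary moves inside ${\cal A}^{-1}{\cal A}$, but you give no mechanism for this, and it is exactly the point at issue. In particular your proposed route of ``pushing all homotopies through a fixed path object'' does not stay inside ${\cal A}$: a right homotopy $H \co \tilde x \to z$ is not a fibration, so it is not a map of ${\cal A}$ and cannot be manipulated in the groupoid completion. The paper's mechanism is different and concrete: having put every map in the canonical form above, it suffices to show that right homotopic acyclic fibrations $\tilde x \to y$ are identified in ${\cal A}^{-1}{\cal A}$, and this is done with the double path object $h = z \times_y z$ (fiber product over $p_0$), whose two projections satisfy $p_0 j_0 = p_0 j_1$ on the nose; since $p_0$ is an acyclic fibration, inverting it gives $j_0 = j_1$, hence $p_1 j_0 = p_1 j_1$, an identification achieved entirely by composing and inverting maps that already lie in ${\cal A}$. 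Without some such device your outline stalls at precisely the step you identified as the main obstacle.
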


\begin{rmk}
The dual result clearly holds for inverting acyclic cofibrations.
\end{rmk}

\begin{proof}
Any object in ${\cal M}$ is equivalent to a cofibrant-fibrant one, so
the functor is obviously essentially surjective.

Let $x, y$ be cofibrant-fibrant objects in ${\cal M}$, and consider
the obvious map $x \to x \times (\prod_f y)$, where the product is indexed
by weak equivalences $f\co x \to y$.  We can factor this map into an
acyclic cofibration $x \to \tilde x$ followed by a fibration, and for
any such weak equivalence $f$ this yields a diagram in ${\cal M}$ of
the form
\[
\xymatrix{
x \ar@{ >->}[r]^-\sim \ar[d]_=&
\tilde x \ar@{->>}[dl]_\sim \ar@{->>}[d] \ar@{->>}[dr]^\sim\\
x &
x \times (\prod_f y) \ar@{->>}[l] \ar@{->>}[r] &
y.
}
\]
This shows that ${\cal A}^{-1} {\cal A} \to ho({\cal M})^w$ is
full.  Moreover, all maps in the homotopy category are realized
in ${\cal A}^{-1} {\cal A}$ by the inverse of the map $\tilde x \fibr
x$ followed by a map $\tilde x \fibr y$.  Therefore, to complete the
proof it suffices to show that right homotopic acyclic fibrations 
$\tilde x \to y$ become equal in ${\cal A}^{-1} {\cal A}$.

Let $y \cofb z \fibr y \times y$ be a path object for $y$, with $p_0,
p_1\co z \fibr y$ the component projections (which are acyclic
fibrations).  Let $h = z \times_y z$, with the product taken over
$p_0$ on both factors, and $j_0, j_1\co h \fibr z$ the component
projections.  The maps $p_1 j_0$ and $p_1 j_1$ make the object $h$
into another path object for $y$.  However, we have an identity of
acyclic fibrations $p_0 j_0 = p_0 j_1$, and so in the category ${\cal
  A}^{-1} {\cal A}$ we have $p_1 j_0 = p_1 j_1$.
\end{proof}


\begin{defn}
For a small category $I$, the functor category ${\cal D}^I$ is
a ${\cal V}$-enriched category, with $\VMap_{{\cal D}^I}(F,G)$
described by the equalizer diagram
\[
\xymatrix@C=2.5ex{\VMap_{{\cal D}^I}(F,G) \ar[r] & \displaystyle{\prod_{i} \VMap_{\cal D}(F(i),G(i))} 
\ar@<.5ex>[r] \ar@<-.5ex>[r] & \displaystyle{\prod_{i \to j} \VMap_{\cal D}(F(i), G(j))}.
}\]
In the particular case where $I$ is the poset $\{0 < 1\}$ and ${\cal
  D}^I$ is the category of arrows $Ar({\cal D})$, we will abuse
notation by writing $\VMap_{\cal D}(f,g)$ as an enriched mapping
object between two morphisms $f$ and $g$ of ${\cal D}$.  Similarly, in
the case where $J$ is the poset $\{0 < 1 < 2\}$ and ${\cal D}^J$ is
the category of composable pairs of arrows of ${\cal D}$, for
$J$-diagrams
\[
(\cdot \overto^f \cdot \overto^{f'} \cdot) \text{ and } (\cdot \overto^g
\cdot \overto^{g'} \cdot)
\]
we will similarly write $\VMap_{\cal D}((f',f),(g',g))$ as an
enriched mapping object.
\end{defn}

\begin{rmk}
\label{rmk:diagram-sm7}
When $\VMap_{\cal D}\co {\cal D}^{op} \times {\cal D} \to {\cal V}$ preserves
limits, we can say more.  For Reedy categories $I$, the category
${\cal D}^I$ then inherits a ${\cal V}$-enriched Reedy model structure
that satisfies an SM7 axiom.  (Compare \cite{angeltveit-reedy}, which
assumes a symmetric monoidal closed structure on ${\cal V}$.)
\end{rmk}

For maps $f\co a \to b$ and $p\co x \to y$ in ${\cal D}$, the
categorical equalizer $\VMap_{\cal D}(f,p)$ can be alternatively
described in ${\cal V}$ as a fiber product
\[
\VMap_{\cal D}(a,x) \mathop\times_{\VMap_{\cal D}(a,y)} \VMap_{\cal D}(b,y).
\]
This makes the following proposition a straightforward consequence of
the SM7 axiom.

\begin{prop}[{cf. \cite[6.6]{dwyer-hess-longknots}}]
\label{prop:enriched-homotopical-map}
Suppose that in ${\cal D}$, $f\co a \to b$ is a map with cofibrant
domain and $p\co x \fibr y$ is a fibration between fibrant objects.
Then
\begin{itemize}
\item the map $\VMap_{\cal D}(f,p) \to \VMap_{\cal D}(b,y)$ is a
  fibration;
\item if $p$ is an acyclic fibration, then the map $\VMap_{\cal
  D}(f,p) \to \VMap_{\cal D}(b,y)$ is an acyclic fibration; and
\item if $p$ is an acyclic fibration and $f$ is a weak equivalence
  between cofibrant objects, then the map $\VMap_{\cal  D}(f,p) \to
  \VMap_{\cal D}(a,x)$ is a weak equivalence.
\end{itemize}
\end{prop}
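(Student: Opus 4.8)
The plan is to use the description of $\VMap_{\cal D}(f,p)$ as the fiber product $\VMap_{\cal D}(a,x)\times_{\VMap_{\cal D}(a,y)}\VMap_{\cal D}(b,y)$ given just above the statement, so that all three maps in question arise from the single map $\VMap_{\cal D}(a,p)\co\VMap_{\cal D}(a,x)\to\VMap_{\cal D}(a,y)$ by base change or by a two-out-of-three argument, with SM7 supplying the needed input.

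First I would collect two consequences of SM7. Since $a$ is cofibrant, applying the axiom to the cofibration $\emptyset\cofb a$ and the fibration $p$ --- and using that $\VMap_{\cal D}(\emptyset,-)$ is the terminal object of ${\cal V}$ --- shows that $\VMap_{\cal D}(a,p)$ is a fibration in ${\cal V}$, acyclic whenever $p$ is. Next, applying SM7 to an acyclic cofibration between cofibrant objects and to the fibration $y\fibr\ast$ shows that $\VMap_{\cal D}(-,y)$ carries acyclic cofibrations between cofibrant objects to weak equivalences, so by Ken Brown's lemma it carries all weak equivalences between cofibrant objects to weak equivalences; this is where fibrancy of $y$ enters. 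With these two facts the first two bullet points are immediate: the projection $\VMap_{\cal D}(f,p)\to\VMap_{\cal D}(b,y)$ is the base change of $\VMap_{\cal D}(a,p)$ along $\VMap_{\cal D}(f,y)$, hence a fibration, and an acyclic fibration once $p$ is one, since (acyclic) fibrations are stable under pullback.

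For the third bullet point I would look at the defining pullback square, with horizontal arrows $\VMap_{\cal D}(a,p)$ and $\VMap_{\cal D}(f,y)$ and vertical arrows the two projections out of $\VMap_{\cal D}(f,p)$. When $p$ is an acyclic fibration, $\VMap_{\cal D}(a,p)$ is an acyclic fibration and the projection $\VMap_{\cal D}(f,p)\to\VMap_{\cal D}(b,y)$ is a weak equivalence by the second bullet point; composing that projection with the weak equivalence $\VMap_{\cal D}(f,y)$ exhibits $\VMap_{\cal D}(a,p)$ precomposed with $\VMap_{\cal D}(f,p)\to\VMap_{\cal D}(a,x)$ as a weak equivalence, and since $\VMap_{\cal D}(a,p)$ is itself a weak equivalence, two-out-of-three gives the claim. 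The step I expect to need the most care is the Ken Brown argument, together with checking that the cited instances of SM7 really do degenerate as asserted --- in particular that $\VMap_{\cal D}$ with initial domain or terminal codomain is terminal in ${\cal V}$, which is where any implicit tensoring hypotheses are used; everything else is formal stability of (acyclic) fibrations under pullback and the two-out-of-three property.
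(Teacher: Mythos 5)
Your strategy is exactly the paper's: the paper gives no written proof beyond the remark, just before the statement, that the fiber-product description of $\VMap_{\cal D}(f,p)$ makes the proposition a straightforward consequence of SM7, and your bookkeeping carries this out correctly --- the first two bullets by base change of $\VMap_{\cal D}(a,p)$ along $\VMap_{\cal D}(f,y)$, and the third by two-out-of-three in the square $\VMap_{\cal D}(a,p)\circ r=\VMap_{\cal D}(f,y)\circ q$, using that $q$ is an acyclic fibration by the second bullet and that $\VMap_{\cal D}(f,y)$ is a weak equivalence since $f$ is a weak equivalence between cofibrant objects and $y$ is fibrant.

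The one justification to repair is the degeneration you invoke to get that $\VMap_{\cal D}(a,p)$ is an (acyclic) fibration. In the enrichment this paper actually uses --- symmetric sequences, with $\VMap_{\cal D}(x,y)=\{\Map_{\cal D}(x^{\owedge n},y)\}_n$ --- the object $\VMap_{\cal D}(\emptyset,y)$ is \emph{not} terminal: its degree-zero piece is $\Map_{\cal D}(\mb I,y)$, because the empty tensor power is the unit. (The terminal-codomain degeneration $\VMap_{\cal D}(c,\ast)\cong\ast$ that you use for the Ken Brown step does hold there.) So SM7 applied to $\emptyset\cofb a$ only produces a fibration onto $\VMap_{\cal D}(\emptyset,x)\times_{\VMap_{\cal D}(\emptyset,y)}\VMap_{\cal D}(a,y)$, and you would still need $\VMap_{\cal D}(\emptyset,p)$ to be an (acyclic) fibration to project further. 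The statement you actually need --- for cofibrant $a$, $\VMap_{\cal D}(a,-)$ takes (acyclic) fibrations between fibrant objects to (acyclic) fibrations --- is true in the paper's operadic setting, but by a degreewise appeal to the simplicial SM7 axiom: $\VMap_{\cal D}(a,p)$ in degree $n$ is $\Map_{\cal D}(a^{\owedge n},p)$ with $a^{\owedge n}$ cofibrant, the case $n=0$ being exactly where Remarks~\ref{rmk:empty-tensor} and~\ref{rmk:cofibrant-unit} (cofibrant unit) are needed. In other words, this is a mild extra input beyond the bare SM7 axiom of Section~\ref{sec:enrich-endom} --- one the paper's terse treatment also leaves implicit --- rather than a formal consequence of the degeneration you cite; with that substitution your argument is complete.
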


Restricting to the case where $f$ and $p$ coincide, we deduce the
following consequences for endomorphisms.

\begin{cor}
\label{cor:enriched-homotopical-end}
Suppose that in ${\cal D}$, $f\co x \fibr y$ is an acyclic fibration
between fibrant objects.  If $x$ is cofibrant, then the map
$\VEnd_{\cal D}(f) \to \VEnd_{\cal D}(y)$ is an acyclic fibration in
$\VMon$, and if both $x$ and $y$ are cofibrant, then the map $\VEnd_{\cal
D}(f) \to \VEnd_{\cal D}(x)$ is a weak equivalence in $\VMon$.  
\end{cor}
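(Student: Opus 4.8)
The plan is to deduce this corollary directly from Proposition \ref{prop:enriched-homotopical-map} by specializing to the case where the map $f \co a \to b$ and the fibration $p \co x \fibr y$ are taken to be one and the same acyclic fibration. The key observation is that $\VEnd_{\cal D}(f)$, as defined via the arrow category ${\cal D}^{\{0<1\}}$, is precisely $\VMap_{\cal D}(f,f)$ in the notation of the preceding definition, so the hypotheses of the proposition apply with $a = x$, $b = y$, and $p = f$. Since $f$ is a fibration between fibrant objects and $x$ is cofibrant, the second bullet of Proposition \ref{prop:enriched-homotopical-map} gives that $\VEnd_{\cal D}(f) = \VMap_{\cal D}(f,f) \to \VMap_{\cal D}(y,y) = \VEnd_{\cal D}(y)$ is an acyclic fibration in ${\cal V}$; and if moreover $y$ is cofibrant (so that $f$ is a weak equivalence between cofibrant objects), the third bullet gives that $\VEnd_{\cal D}(f) \to \VMap_{\cal D}(x,x) = \VEnd_{\cal D}(x)$ is a weak equivalence in ${\cal V}$.

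The only remaining point is to upgrade these statements from maps in ${\cal V}$ to maps in $\VMon$. This is immediate from the definitions: by the definition preceding the corollary, a map in $\VMon$ is a fibration (resp. weak equivalence) exactly when its underlying map in ${\cal V}$ is one. One should check that the evaluation maps $\VEnd_{\cal D}(f) \to \VEnd_{\cal D}(x)$ and $\VEnd_{\cal D}(f) \to \VEnd_{\cal D}(y)$, obtained by restricting along the inclusions of the initial and terminal objects of $\{0 < 1\}$, are indeed maps of monoids — this follows because composition in the endomorphism object of a diagram is computed componentwise, so restriction to a component is multiplicative and unital.

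I do not anticipate any genuine obstacle here: this corollary is a formal bookkeeping consequence of the proposition together with the definitions of $\VEnd$ and of fibrations/weak equivalences in $\VMon$. The one piece of care needed is confirming that $\VMap_{\cal D}(f,f)$ really is the enriched endomorphism monoid $\VEnd_{\cal D}(f)$ of the object $f \in Ar({\cal D})$ — that is, that the fiber-product description of $\VMap_{\cal D}(f,p)$ given just before the proposition agrees, when $p = f$, with the equalizer description of $\VMap_{{\cal D}^{\{0<1\}}}(f,f)$ from the definition of enriched diagram categories. This compatibility is exactly the content of the parenthetical remark that $\VMap_{\cal D}(f,p)$ "can be alternatively described as a fiber product," so it may be invoked without further comment.
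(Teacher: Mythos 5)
Your proposal is correct and matches the paper's argument: the corollary is obtained exactly by specializing Proposition~\ref{prop:enriched-homotopical-map} to the case $f = p$ (an acyclic fibration with cofibrant domain between fibrant objects), with the passage to $\VMon$ being immediate since fibrations and weak equivalences of ${\cal V}$-monoids are defined on underlying objects. Your extra checks that $\VEnd_{\cal D}(f) = \VMap_{\cal D}(f,f)$ and that the restriction maps are monoid maps are sound, and are exactly what the paper leaves implicit.
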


Similar analysis yields the following.

\begin{prop}
\label{prop:enriched-composition}
Suppose that in ${\cal D}$, $a \xrightarrow{f} b \xrightarrow{g} c$ are
maps between cofibrant objects and $x \overset{p}{\fibr} y
\overset{q}{\fibr} z$ are fibrations with $z$ fibrant.  Then
\begin{itemize}
\item the map $\VMap_{\cal D}((g,f),(q,p)) \to
  \VMap_{\cal D}(g,q)$ is a fibration;
\item if $p$ is an acyclic fibration, then the map $\VMap_{\cal
      D}((g,f),(q,p)) \to \VMap_{\cal D}(g,q)$ is an
  acyclic fibration; and
\item if $p$ and $q$ are acyclic fibrations and $g$ is a weak equivalence, 
  then both the maps
  \begin{align*}
\VMap_{\cal D}((g,f),(q,p)) &\to \VMap_{\cal D}(f,p)\text{ and}\\
\VMap_{\cal D}((g,f),(q,p)) &\to \VMap_{\cal D}(gf,qp)
  \end{align*}
  are weak equivalences.
\end{itemize}
\end{prop}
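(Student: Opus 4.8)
The plan is to reduce everything to Proposition~\ref{prop:enriched-homotopical-map}, using only that (acyclic) fibrations are stable under base change together with the two-out-of-three axiom; this parallels the deduction of that proposition from SM7. Write the two $J$-diagrams as $a \xrightarrow{f} b \xrightarrow{g} c$ and $x \xrightarrow{p} y \xrightarrow{q} z$. Unwinding the equalizer description of $\VMap_{{\cal D}^J}(-,-)$, the compatibility condition indexed by the morphism $0\to 2$ of $J$ is a consequence of those indexed by $0\to1$ and $1\to 2$, so one obtains a natural identification
\[
\VMap_{\cal D}((g,f),(q,p)) \;\cong\; \VMap_{\cal D}(f,p)\mathop\times_{\VMap_{\cal D}(b,y)}\VMap_{\cal D}(g,q),
\]
in which the two maps to $\VMap_{\cal D}(b,y)$ are, respectively, the codomain projection of $\VMap_{\cal D}(f,p)$ and the domain projection of $\VMap_{\cal D}(g,q)$, and under which the map $\VMap_{\cal D}((g,f),(q,p))\to\VMap_{\cal D}(g,q)$ is the projection off this pullback. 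I would also note at the outset that, since $z$ is fibrant and $p,q$ are fibrations, $y$ and $x$ are fibrant as well; consequently Proposition~\ref{prop:enriched-homotopical-map} applies to each of the pairs $(f,p)$, $(g,q)$, and $(gf,qp)$, the last because $qp$ is again a fibration between fibrant objects (and an acyclic one once both $p$ and $q$ are acyclic).

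Given this, the first two bullets are immediate: $\VMap_{\cal D}((g,f),(q,p))\to\VMap_{\cal D}(g,q)$ is the base change of $\VMap_{\cal D}(f,p)\to\VMap_{\cal D}(b,y)$ along $\VMap_{\cal D}(g,q)\to\VMap_{\cal D}(b,y)$, and the first two bullets of Proposition~\ref{prop:enriched-homotopical-map} say that $\VMap_{\cal D}(f,p)\to\VMap_{\cal D}(b,y)$ is a fibration, acyclic when $p$ is; base change preserves both properties.

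For the third bullet, suppose $p$ and $q$ are acyclic fibrations and $g$ is a weak equivalence. By the second bullet just proved, $\VMap_{\cal D}((g,f),(q,p))\to\VMap_{\cal D}(g,q)$ is an acyclic fibration. Applying Proposition~\ref{prop:enriched-homotopical-map} to $(g,q)$ shows that $\VMap_{\cal D}(g,q)\to\VMap_{\cal D}(c,z)$ is an acyclic fibration and, since $g$ is a weak equivalence between cofibrant objects, that $\VMap_{\cal D}(g,q)\to\VMap_{\cal D}(b,y)$ is a weak equivalence; likewise $\VMap_{\cal D}(f,p)\to\VMap_{\cal D}(b,y)$ and $\VMap_{\cal D}(gf,qp)\to\VMap_{\cal D}(c,z)$ are acyclic fibrations. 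Now the composite $\VMap_{\cal D}((g,f),(q,p))\to\VMap_{\cal D}(f,p)\to\VMap_{\cal D}(b,y)$ agrees with $\VMap_{\cal D}((g,f),(q,p))\to\VMap_{\cal D}(g,q)\to\VMap_{\cal D}(b,y)$, hence is a weak equivalence, and two-out-of-three with the weak equivalence $\VMap_{\cal D}(f,p)\to\VMap_{\cal D}(b,y)$ gives the first claimed weak equivalence. Similarly the composite $\VMap_{\cal D}((g,f),(q,p))\to\VMap_{\cal D}(gf,qp)\to\VMap_{\cal D}(c,z)$ agrees with $\VMap_{\cal D}((g,f),(q,p))\to\VMap_{\cal D}(g,q)\to\VMap_{\cal D}(c,z)$, a composite of acyclic fibrations, so two-out-of-three with $\VMap_{\cal D}(gf,qp)\to\VMap_{\cal D}(c,z)$ yields the second.

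I expect the only delicate point to be the first step: carefully identifying the enriched mapping object of composable pairs with the displayed pullback — in particular that the relation indexed by $0\to 2$ really is redundant at the level of the enriched limit in $\cal V$, not merely on points — and verifying that the fibrancy hypotheses propagate so that Proposition~\ref{prop:enriched-homotopical-map} is applicable to all three pairs. Once the pullback picture and these applicability conditions are in place, the rest is a formal diagram chase with base changes and two-out-of-three.
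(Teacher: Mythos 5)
Your proof is correct, and it fills in exactly what the paper leaves implicit (the paper only says ``similar analysis yields the following,'' meaning the same SM7-based analysis used for Proposition~\ref{prop:enriched-homotopical-map}): the identification $\VMap_{\cal D}((g,f),(q,p)) \cong \VMap_{\cal D}(f,p)\times_{\VMap_{\cal D}(b,y)}\VMap_{\cal D}(g,q)$ is valid since the $0\to 2$ compatibility follows from the other two by bifunctoriality of the enrichment, and your base-change plus two-out-of-three deductions, with the observation that $x$ and $y$ are fibrant and $qp$ is an (acyclic) fibration, are exactly the intended argument. No gaps.
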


\begin{cor}
\label{cor:enriched-end-composition}
Suppose that in ${\cal D}$, $f\co x \fibr y$ and $g\co y \fibr z$ are
acyclic fibrations between cofibrant-fibrant objects.  Then the map
$\VEnd_{\cal D}((g,f)) \to \VEnd_{\cal D}(g)$ is an acyclic
fibration in $\VMon$, and the maps
\begin{align*}
\VEnd_{\cal D}((g,f)) &\to \VEnd_{\cal D}(f) \text{ and}\\
\VEnd_{\cal D}((g,f)) &\to \VEnd_{\cal D}(gf)
\end{align*}
are weak equivalences in $\VMon$.
\end{cor}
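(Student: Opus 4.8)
The plan is to deduce this corollary from Proposition 3.10 (enriched-composition) in exactly the same way that Corollary 3.7 was deduced from Proposition 3.5 — by specializing to the diagonal case where the pair of fibrations $(g,f)$ coincides with the pair of maps whose endomorphisms we wish to compare. Concretely, I would apply Proposition 3.10 with $a \to b \to c$ taken to be $x \xrightarrow{f} y \xrightarrow{g} z$ itself, and with $x \xrightarrow{p} y \xrightarrow{q} z$ taken to be the same pair $f, g$. Since $x, y, z$ are cofibrant-fibrant, all the hypotheses of Proposition 3.10 are met: the maps in the first chain are maps between cofibrant objects, the maps in the second chain are (acyclic) fibrations, $z$ is fibrant, and in the third bullet's hypothesis $p = f$ and $q = g$ are acyclic fibrations while $g$ (playing the role of the ``$g$'' in the proposition) is a weak equivalence.

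With this substitution, $\VMap_{\cal D}((g,f),(q,p))$ becomes $\VEnd_{\cal D}((g,f))$, the object $\VMap_{\cal D}(g,q)$ becomes $\VEnd_{\cal D}(g)$, the object $\VMap_{\cal D}(f,p)$ becomes $\VEnd_{\cal D}(f)$, and $\VMap_{\cal D}(gf,qp)$ becomes $\VEnd_{\cal D}(gf)$. The three bullets of Proposition 3.10 then read: the map $\VEnd_{\cal D}((g,f)) \to \VEnd_{\cal D}(g)$ is a fibration; since $p = f$ is an acyclic fibration, this map is in fact an acyclic fibration; and since $p = f$ and $q = g$ are both acyclic fibrations and $g$ is a weak equivalence, the maps $\VEnd_{\cal D}((g,f)) \to \VEnd_{\cal D}(f)$ and $\VEnd_{\cal D}((g,f)) \to \VEnd_{\cal D}(gf)$ are weak equivalences. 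This is precisely the content of the corollary.

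The one point requiring a word of care is that the corollary asserts these are maps \emph{in $\VMon$} with the fibration/weak-equivalence notions of Definition~3.3, whereas Proposition 3.10 produces maps in ${\cal V}$. But this is immediate: the forgetful functor $\VMon \to {\cal V}$ detects fibrations and weak equivalences essentially by definition, and the structure maps relating endomorphism objects of composable pairs to endomorphism objects of the individual arrows and their composite are maps of monoids in ${\cal V}$ (they are induced by the evident functors between the indexing posets $\{0<1<2\}$ and its sub/quotient posets, hence are compatible with the composition-product structure). So it suffices to check the underlying statements in ${\cal V}$, which is what Proposition 3.10 gives.

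I do not anticipate a genuine obstacle here; this is a formal corollary obtained by the same diagonal specialization used for Corollary~3.7, and the proof is a single sentence invoking Proposition~\ref{prop:enriched-composition}. The only thing to be careful about is making the identification of the four enriched mapping objects under the substitution fully explicit, and noting — as above — that monoid structure is automatic so that no extra verification is needed to land in $\VMon$.
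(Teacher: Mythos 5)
Your proposal is correct and matches the paper's (implicit) argument: the corollary is obtained exactly by the diagonal specialization of Proposition~\ref{prop:enriched-composition} with both chains taken to be $x \xrightarrow{f} y \xrightarrow{g} z$, just as Corollary~\ref{cor:enriched-homotopical-end} specializes Proposition~\ref{prop:enriched-homotopical-map}. Your added remark that fibrations and weak equivalences in $\VMon$ are detected in ${\cal V}$ (by the paper's definition) and that the comparison maps are monoid maps is exactly the routine point the paper leaves unstated.
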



\begin{thm}
\label{thm:homotopy-end}
Suppose that the category of ${\cal V}$-monoids has a homotopy
category $ho(\VMon)$.  Then there is a derived functor
\[
\mb R\VEnd_{\cal D}\co ho({\cal D})^w \to ho(\VMon)^w,
\]
from isomorphisms in the homotopy category of ${\cal D}$ to
isomorphisms in the homotopy category of ${\cal V}$-monoids.

This lifts the composite of the antidiagonal
\[
ho({\cal D})^w \to ho({\cal D})^{op} \times ho({\cal D})
\]
with the functor
\[
\mb R\VMap_{\cal D}\co ho({\cal D})^{op} \times ho({\cal D}) \to ho({\cal V}).
\]
The monoid $\VEnd_{\cal D}(c)$ represents the derived homotopy type in
$ho(\VMon)$ on cofibrant-fibrant objects.
\end{thm}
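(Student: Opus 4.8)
The plan is to construct the functor $\mb R\VEnd_{\cal D}$ by transporting the endomorphism construction along the equivalence furnished by Lemma~\ref{lem:invert-cofibrations}. First I would restrict attention to the full subcategory ${\cal D}_{cf} \subset {\cal D}$ of cofibrant-fibrant objects, and let ${\cal A}$ be its subcategory of acyclic fibrations. Corollary~\ref{cor:enriched-homotopical-end} shows that an acyclic fibration $f\co x \fibr y$ induces a span $\VEnd_{\cal D}(x) \overset{\sim}{\leftarrow} \VEnd_{\cal D}(f) \overset{\sim}{\to} \VEnd_{\cal D}(y)$ of weak equivalences in $\VMon$, hence an isomorphism in $ho(\VMon)^w$; this defines the value of the would-be functor on morphisms of ${\cal A}$. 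The content of Corollary~\ref{cor:enriched-end-composition} is precisely what is needed to check functoriality: for a composable pair of acyclic fibrations the two composites agree because $\VEnd_{\cal D}((g,f))$ maps by weak equivalences to both $\VEnd_{\cal D}(f)$ (covering $\VEnd_{\cal D}(x)\leftarrow\VEnd_{\cal D}(g)$, essentially) and $\VEnd_{\cal D}(gf)$, so the two ways of composing the spans become equal arrows in $ho(\VMon)^w$. This produces a well-defined functor ${\cal A} \to ho(\VMon)^w$, which by the universal property of groupoid completion (since every arrow in the target is invertible) extends uniquely to ${\cal A}^{-1}{\cal A} \to ho(\VMon)^w$.

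Next I would invoke Lemma~\ref{lem:invert-cofibrations}, which says the natural functor ${\cal A}^{-1}{\cal A} \to ho({\cal D})^w$ is fully faithful and essentially surjective, hence an equivalence of categories. Composing a chosen inverse equivalence $ho({\cal D})^w \xrightarrow{\sim} {\cal A}^{-1}{\cal A}$ with the functor just built yields the desired $\mb R\VEnd_{\cal D}\co ho({\cal D})^w \to ho(\VMon)^w$. By construction its value on a cofibrant-fibrant object $c$ is (isomorphic in $ho(\VMon)$ to) $\VEnd_{\cal D}(c)$ itself, which gives the last sentence of the statement.

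For the compatibility assertion, I would check that $\mb R\VEnd_{\cal D}$, postcomposed with the forgetful functor $ho(\VMon)^w \to ho({\cal V})$, agrees with the composite of the antidiagonal $ho({\cal D})^w \to ho({\cal D})^{op}\times ho({\cal D})$ and $\mb R\VMap_{\cal D}$. This reduces to the corresponding compatibility at the level of ${\cal A}$: for an acyclic fibration $f\co x\fibr y$ the span $\VEnd_{\cal D}(x)\leftarrow\VEnd_{\cal D}(f)\to\VEnd_{\cal D}(y)$ has underlying span in ${\cal V}$ exactly $\VMap_{\cal D}(x,x)\leftarrow\VMap_{\cal D}(f,f)\to\VMap_{\cal D}(y,y)$, which is precisely what Proposition~\ref{prop:enriched-homotopical-map} (applied with $f=p$) describes as computing $\mb R\VMap_{\cal D}$ along the antidiagonal; since $\mb R\VMap_{\cal D}$ is already known to be a functor on $ho({\cal D})^{op}\times ho({\cal D})$, the two extensions to $ho({\cal D})^w$ must coincide.

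The main obstacle I anticipate is the bookkeeping for functoriality and well-definedness under the groupoid-completion step: one must verify that the spans of weak equivalences in $\VMon$ compose correctly not just up to some coherent system but strictly as morphisms in the ordinary category $ho(\VMon)^w$, and that this does not secretly depend on choices (e.g.\ of the factorizations used in Lemma~\ref{lem:invert-cofibrations} or of the inverse equivalence). The key technical input making this go through is that $ho(\VMon)$ is assumed to exist as an honest category in which weak equivalences are inverted, so that once Corollaries~\ref{cor:enriched-homotopical-end} and~\ref{cor:enriched-end-composition} identify the relevant zig-zags as isomorphisms and verify the cocycle condition for composable pairs, the extension along ${\cal A} \to {\cal A}^{-1}{\cal A}$ is forced and unique. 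A secondary point to be careful about is that ${\cal A}$ must be taken between cofibrant-fibrant objects throughout, so that all objects in sight are simultaneously cofibrant and fibrant and the hypotheses of the corollaries are met without further replacement.
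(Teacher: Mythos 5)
Your proposal is correct and follows essentially the same route as the paper's proof: reduce via Lemma~\ref{lem:invert-cofibrations} to defining the functor on the category ${\cal A}$ of acyclic fibrations between cofibrant-fibrant objects, use Corollary~\ref{cor:enriched-homotopical-end} to produce the span of weak equivalences $\VEnd_{\cal D}(x) \leftarrow \VEnd_{\cal D}(f) \to \VEnd_{\cal D}(y)$, and use Corollary~\ref{cor:enriched-end-composition} to verify compatibility with composition. The additional check of compatibility with $\mb R\VMap_{\cal D}$ along the antidiagonal is a reasonable elaboration of what the paper leaves implicit.
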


\begin{rmk}
  In the case of the mapping space between two objects in a model
  category, this is most easily accomplished using the Dwyer-Kan
  simplicial localization \cite{dwyer-kan-simpliciallocalization}
  (generalized in \cite{dundas-localization}).  This constructs a
  simplicially enriched category, with the correct mapping spaces,
  where the weak equivalences have become isomorphisms.

  However, as natural transformations can only be recovered in the
  simplicial localization using simplicial homotopies, study of the
  interaction between the symmetric monoidal structure and simplicial
  localization would require extra work.  The shortest path is likely
  through $\infty$-category theory, which would take us too far
  afield.
\end{rmk}

\begin{proof}[Proof of \ref{thm:homotopy-end}]
  Let ${\cal A} \subset {\cal D}$ be the category of acyclic
  fibrations between cofibrant-fibrant objects of ${\cal D}$.  By
  Lemma~\ref{lem:invert-cofibrations}, it suffices to define the
  functor ${\cal A} \to ho(\VMon)^w$.

For an acyclic fibration $f\co x \to y$ in ${\cal A}$,
Corollary~\ref{cor:enriched-homotopical-end} gives a diagram of weak
equivalences
\[
\VEnd_{\cal D}(x) \overfrom^\sim \VEnd_{\cal D}(f) \overto^\sim
\VEnd_{\cal D}(y)
\]
in $\VMon$, representing a composite map in $ho(\VMon)^w$.  
For a composition $g \circ f$ we apply
Corollary~\ref{cor:enriched-end-composition} to obtain a commutative
diagram 
\[
\xymatrix{
&\VEnd_{\cal D}((g,f)) \ar[dl] \ar[d] \ar[dr]\\
\VEnd_{\cal D}(f) \ar[d] \ar[dr] & 
\VEnd_{\cal D}(gf) \ar[dl] \ar[dr] &
\VEnd_{\cal D}(g) \ar[dl] \ar[d]\\
\VEnd_{\cal D}(x) & \VEnd_{\cal D}(y) & \VEnd_{\cal D}(z)
}
\]
of weak equivalences in $\VMon$, which shows that the resulting
assignment respects composition.
\end{proof}

We also record that by replacing ${\cal D}$ with the category
$Ar({\cal D})$ of arrows in ${\cal D}$, equipped with the projective
model structure, we obtain the following consequence of
Theorem~\ref{thm:homotopy-end}.
\begin{prop}
\label{prop:arrow-homotopy-end}
Suppose that the category of ${\cal V}$-monoids has a homotopy
category $ho(\VMon)$.  Then there is a derived functor
\[
\mb R\VEnd_{\cal D}\co ho(Ar({\cal D}))^w \to ho(\VMon)^w,
\]
from isomorphisms in the homotopy category of $Ar({\cal D})$ to
isomorphisms in the homotopy category of ${\cal V}$-monoids, together
with natural transformations
\[
\mb R\VEnd_{\cal D}(x) \overfrom \mb R\VEnd_{\cal D}(f)
\overto \mb R\VEnd_{\cal D}(y)
\]
for $f\co x \to y$.

The monoid $\VEnd_{\cal D}(f)$ represents the derived homotopy type in
$ho(\VMon)$ on fibrations between cofibrant-fibrant objects.
\end{prop}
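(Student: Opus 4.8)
The plan is to obtain everything by applying Theorem~\ref{thm:homotopy-end} to the category $Ar({\cal D})$, equipped with the projective model structure (levelwise weak equivalences and fibrations). First I would verify that this is a legitimate input for that theorem. The category $Ar({\cal D}) = {\cal D}^{\{0<1\}}$ carries the ${\cal V}$-enrichment of the Definition preceding Remark~\ref{rmk:diagram-sm7}, under which $\VMap_{Ar({\cal D})}(f,g)$ is the pullback of $\VMap_{\cal D}(f_0,g_0) \to \VMap_{\cal D}(f_0,g_1) \leftarrow \VMap_{\cal D}(f_1,g_1)$, a finite limit. Since $\{0<1\}$ is a Reedy category and the projective structure is one of its two Reedy structures, Remark~\ref{rmk:diagram-sm7} shows that $Ar({\cal D})$ is a ${\cal V}$-enriched model category satisfying SM7 (for this Reedy structure the extra limit-preservation hypothesis there is automatic, no nontrivial matching objects being involved); alternatively, SM7 for $Ar({\cal D})$ follows directly from SM7 in ${\cal D}$ by a pushout--product manipulation, which is the computation underlying Propositions~\ref{prop:enriched-homotopical-map} and~\ref{prop:enriched-composition}. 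As the category of ${\cal V}$-monoids is unchanged and has a homotopy category by hypothesis, Theorem~\ref{thm:homotopy-end} then applies and yields the derived functor $\mb R\VEnd_{\cal D}\co ho(Ar({\cal D}))^w \to ho(\VMon)^w$, where I continue to write $\VEnd_{\cal D}(-)$ for $\VEnd_{Ar({\cal D})}(-)$.

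Next I would produce the natural transformations. For an arrow $f = (x \to y)$, the two projections out of the pullback square above give maps of ${\cal V}$-monoids $\VEnd_{\cal D}(x) \leftarrow \VEnd_{\cal D}(f) \to \VEnd_{\cal D}(y)$, natural in $f$ and induced by evaluation at $0$ and at $1$; restricting these to cofibrant-fibrant objects of $Ar({\cal D})$ and running the construction of $\mb R\VEnd$ from the proof of Theorem~\ref{thm:homotopy-end}, they descend to natural transformations $\mb R\VEnd_{\cal D}(x) \leftarrow \mb R\VEnd_{\cal D}(f) \to \mb R\VEnd_{\cal D}(y)$ on $ho(Ar({\cal D}))^w$.

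The final assertion — that $\VEnd_{\cal D}(f)$ itself represents the derived homotopy type when $f\co x \fibr y$ is a fibration between cofibrant-fibrant objects of ${\cal D}$ — needs a small separate argument, since the cofibrant-fibrant objects of $Ar({\cal D})$ for the projective structure are the cofibrations, not the fibrations, between cofibrant-fibrant objects of ${\cal D}$. The key point is that $\VEnd_{\cal D}(f)$ is a homotopy pullback: in the cospan $\VMap_{\cal D}(x,x) \to \VMap_{\cal D}(x,y) \leftarrow \VMap_{\cal D}(y,y)$, the SM7 axiom in ${\cal D}$ (applied to the cofibration $\emptyset \cofb x$ and the fibration $f$) makes postcomposition with $f$ a fibration, and the three corners already have the correct homotopy type since $x$ is cofibrant and $x,y$ are fibrant. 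To make this precise I would factor $f$ as $x \xrightarrow{j} w \xrightarrow{q} y$ with $j$ a cofibration and $q$ an acyclic fibration; then $\hat f = (x \xrightarrow{j} w)$ is projectively cofibrant-fibrant, so $\VEnd_{\cal D}(\hat f)$ computes $\mb R\VEnd_{\cal D}(f)$, and $\VEnd_{\cal D}(\hat f)$ is likewise a homotopy pullback, this time because SM7 makes precomposition with $j$ a fibration $\VMap_{\cal D}(w,w) \fibr \VMap_{\cal D}(x,w)$. The levelwise weak equivalence $(\mathrm{id}_x, q)\co \hat f \to f$ then induces a map of these two cospans that is a weak equivalence on each corner (using that $q$ is an acyclic fibration, $x$ is cofibrant, and $w$ and $y$ are cofibrant-fibrant), whence $\mb R\VEnd_{\cal D}(f) = \VEnd_{\cal D}(\hat f) \simeq \VEnd_{\cal D}(f)$ in $\VMon$.

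The hard part is precisely this last step: the projective structure is chosen so that Theorem~\ref{thm:homotopy-end} applies verbatim, but its cofibrant-fibrant objects are cofibrations rather than fibrations, so pinning down the derived value on fibrations between cofibrant-fibrant objects is not purely formal and genuinely requires the homotopy-pullback comparison above — equivalently, one could set up a dual of Theorem~\ref{thm:homotopy-end}, built by inverting acyclic cofibrations, and apply it to the injective model structure on $Ar({\cal D})$. The SM7 verification for $Ar({\cal D})$ and the naturality of the two projections are routine given the machinery of Section~\ref{sec:enrich-endom}.
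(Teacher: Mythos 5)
Your overall route is the paper's own: the paper proves this proposition with a single sentence, replacing ${\cal D}$ by $Ar({\cal D})$ with the projective model structure and invoking Theorem~\ref{thm:homotopy-end}, and your first two paragraphs reproduce exactly that, including the natural transformations coming from evaluation at the two ends of an arrow. You are also right to flag the elided point: the projectively cofibrant-fibrant arrows are cofibrations with cofibrant source between fibrant objects, not fibrations, so the final clause of the statement is not verbatim the final clause of Theorem~\ref{thm:homotopy-end} for that structure.

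However, the bridge you build for that final clause has a genuine gap. After factoring $f$ as $x \overset{j}{\cofb} w \overset{q}{\fibr} y$ and setting $\hat f = (x \to w)$, you assert that $(\mathrm{id}_x, q)\co \hat f \to f$ ``induces a map of these two cospans.'' It does not: on the third corner you would need a map $\VMap_{\cal D}(w,w) \to \VMap_{\cal D}(y,y)$ (in level $n$, a map $\Map_{\cal D}(w^{\owedge n},w) \to \Map_{\cal D}(y^{\owedge n},y)$), and $q$ only gives postcomposition into $\VMap_{\cal D}(w,y)$; mapping into $\VMap_{\cal D}(y,y)$ would require precomposing along a section of $q^{\owedge n}$, which you do not have. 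This is exactly the non-functoriality of endomorphism objects that Section~\ref{sec:enrich-endom} exists to circumvent, and the section's tools do not repair it instantly: applying Corollary~\ref{cor:enriched-homotopical-end} inside $Ar({\cal D})$ to the projective acyclic fibration $\phi = (\mathrm{id}_x,q)\co \hat f \fibr f$ gives a weak equivalence $\VEnd_{Ar({\cal D})}(\phi) \to \VEnd_{\cal D}(f)$, but the comparison $\VEnd_{Ar({\cal D})}(\phi) \to \VEnd_{\cal D}(\hat f)$ needs the target $f$ to be cofibrant in $Ar({\cal D})$, which it is not. The clean fix is the alternative you relegate to a parenthetical: use the injective (Reedy, with $\{0<1\}$ inverse) structure on $Ar({\cal D})$, whose cofibrant-fibrant objects are precisely the fibrations between cofibrant-fibrant objects, so that the last sentence of the proposition is literally the last sentence of Theorem~\ref{thm:homotopy-end}; that should be the primary argument (and it is what the final clause is tracking, the word ``projective'' notwithstanding). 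A direct comparison of $\VEnd_{\cal D}(\hat f)$ with $\VEnd_{\cal D}(f)$ is also possible, but it requires an honest homotopy-limit argument using that precomposition along $q^{\owedge n}$ and postcomposition along $q$ are separately equivalences, not a single induced map of cospans.
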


\subsection{Tensor model categories}
\label{sec:tens-model-categ}

First, we recall interaction between a monoidal structure and a model
category structure. 

Recall that the {\em pushout product axiom} for cofibrations in a
model category with monoidal product $\owedge$ says that if $f\co x
\to y$ and $f'\co x' \to y'$ are cofibrations, then the pushout map
\[
(y \owedge x') \amalg_{x \owedge x'} (x \owedge y') \to y \owedge y'
\]
is a cofibration, and is a weak equivalence if either $f$ or $f'$ is.

\begin{defn}
\label{def:tensormodel}
  A {\em tensor model category} is a model category ${\cal D}$
  equipped with a monoidal product that
  \begin{itemize}
  \item satisfies the pushout product axiom for cofibrations,
  \item takes the product with an initial object in either variable to
    an initial object, and
  \item preserves weak equivalences when either of the inputs is
    cofibrant.
  \end{itemize}
  If ${\cal D}$ is further equipped with a symmetric tensor structure,
  ${\cal D}$ is a {\em symmetric tensor model category}.
\end{defn}

\begin{rmk}
  \label{rmk:empty-tensor}
  Note that the second component makes this more restrictive than
  \cite[12.1, 12.2]{fausk-isaksen-tmodel}.  Because the product with
  an initial object is always initial, the pushout product axiom
  implies that the monoidal product preserves cofibrant objects.
\end{rmk}

By analogy with the definition of a (lax) monoidal Quillen adjunction
\cite[3.6]{schwede-shipley-monoidalequivalences}, we have the
following.

\begin{defn}
\label{def:tensorquillen}
  Suppose that ${\cal D}$ and ${\cal D}'$ are tensor model categories.
  A {\em tensor Quillen adjunction} is a Quillen adjoint pair of functors
\[
L \colon {\cal D} \rightleftarrows {\cal D}' \colon R,
\]
  together with a lax monoidal structure on $R$, such that
  \begin{itemize}
  \item for any cofibrant objects $x,y \in {\cal D}$, the induced natural
    transformation $L(x \owedge y) \to L(x) \owedge' L(y)$ is a weak
    equivalence, and
  \item for some cofibrant replacement $\mb I_c$ of the unit $\mb I$
    of ${\cal D}$, the induced map $L(\mb I_c) \to \mb I'$ is a weak
    equivalence.
  \end{itemize}
  We refer to this as a {\em symmetric tensor Quillen adjunction} if the
  functor $R$ is lax symmetric monoidal, and a {\em tensor Quillen
  equivalence} if the underlying adjunction is a Quillen equivalence.
\end{defn}

\begin{defn}
  A {\em simplicial tensor model category} is a simplicial model
  category ${\cal D}$ equipped with a monoidal product such that
  \begin{itemize}
  \item this structure makes ${\cal D}$ into a tensor model category, 
  \item there are choices of natural isomorphisms
    \begin{align*}
K \otimes x &\cong x \owedge (K \otimes \mb I)\text{ and}\\
K \otimes x &\cong (K \otimes \mb I) \owedge x
    \end{align*}
    for $x \in {\cal D}$ and $K$ a finite simplicial set which are
    compatible with the unit isomorphism, and
  \item the functor $\Map_{\cal D}(\mb I,-)$ is a right Quillen
    functor.
  \end{itemize}
  (Here $\otimes$ denotes the tensor of objects of ${\cal D}$ with
  simplicial sets from the simplicial model structure, and $\Map_{\cal
    D}$ denotes the simplicial mapping object.)  In this case,
  we say that the monoidal structure on ${\cal D}$ is {\em compatible}
  with the simplicial model structure.

  A {\em simplicial symmetric tensor model category} is a simplicial
  tensor model category such that the composite natural isomorphism
\[
x \owedge (K \otimes \mb I) \cong K \otimes x  \cong (K \otimes
\mb I) \owedge x
\]
  is the natural symmetry isomorphism.
\end{defn}

We will freely make use of phrases such as ``simplicial (symmetric)
tensor Quillen adjunction/equivalence'' to indicate tensor Quillen
adjunctions with an appropriate lift to a lax monoidal simplicial
Quillen adjunction.

\begin{rmk}
\label{rmk:easytensoradjunction}
We note that several situations occur where the Quillen functors in
question are each the identity functor, viewed as a Quillen functor between
two distinct tensor model structures on the same monoidal category.
In this circumstance, the extra axioms for a tensor Quillen
equivalence or a simplicial symmetric Quillen equivalence are
trivially satisfied.
\end{rmk}

\begin{rmk}
  The Yoneda embedding ensures that, for any finite $K$ and $L$ and
  any $x$, $(K \times L) \otimes x \cong K \otimes (L \otimes x)$.
  Compatibility then implies that this is isomorphic to $(K \otimes
  \mb I) \owedge (L \otimes \mb I) \owedge x$.  These can be used to
  obtain well-behaved maps
\[
\Map_{\cal D}(x,y) \times \Map_{\cal D}(x',y') \to \Map_{\cal D}(x
\owedge x', y \owedge y').
\]
  If the tensor structure is symmetric, this map is equivariant with
  respect to the symmetry isomorphisms.
\end{rmk}

\begin{rmk}
  \label{rmk:cofibrant-unit}
  The following are equivalent.
  \begin{enumerate}
  \item The unit object $\mb I$ is cofibrant in ${\cal D}$.
  \item The functor $\Map_{\cal D}(\mb I, -)$, from ${\cal D}$ to
    simplicial sets, is a right Quillen functor.
  \item The functor $(-) \otimes  \mb I$, from simplicial sets to
    ${\cal D}$, is a left Quillen functor.
  \item The functor $(-) \otimes \mb I$, from simplicial sets to
    ${\cal D}$, preserves cofibrations.
  \end{enumerate}
  Evidently each implies the next.  To complete the equivalence, we
  take the cofibration $\emptyset \cofb *$ and tensor with $\mb I$,
  which (again, checking the Yoneda embedding) is naturally isomorphic
  to the map from an initial object of ${\cal D}$ to $\mb I$.

  It is unsatisfying to make the assumption that the unit is
  cofibrant, but it will ensure homotopical control on endomorphism
  operads.  It may be dropped if we are willing to define operads
  without an object parametrizing $0$-ary operations, but this
  significantly complicates the proof of
  Proposition~\ref{prop:einfty-unit}.
\end{rmk}

The hypotheses of a simplicial tensor model category are designed to
ensure that the monoidal structure can produce a reasonably-behaved
multicategorical enrichment, and hence reasonably-behaved
endomorphism operads.

For the sake of brevity in this paper we employ the following
shorthand, with the implicit understanding that it demonstrates a
prejudice towards simplicial sets.

\begin{defn}
An {\em operadic model category} is a simplicial symmetric tensor
model category.  An {\em operadic Quillen adjunction} is a simplicial
symmetric tensor Quillen adjunction, and if the underlying adjunction
is a Quillen equivalence we refer to it as an {\em operadic Quillen
equivalence}.
\end{defn}

We now relate these to operads in the ordinary sense.

Recall that a {\em symmetric sequence} is a collection of simplicial
sets $\{X(n)\}_{n \geq 0}$ equipped with actions of the symmetric
groups $\Sigma_n$.  There is a model structure on symmetric sequences
whose fibrations and weak equivalences are collections of equivariant
maps $X(n) \to Y(n)$ which satisfy these properties levelwise
(ignoring the action of the symmetric group).  The category of
symmetric sequences has a (non-symmetric) monoidal structure $\circ$,
the {\em composition product}, whose algebras are operads
\cite{markl-shnider-stasheff-operads}.

The main reason for introducing the concept of an operadic model
category is the following proposition.

\begin{prop}
Let ${\cal V}$ be the category of symmetric sequences of simplicial
sets.  Then for an operadic model category ${\cal D}$, the definition
\[
\VMap_{\cal D}(x,y) = \{\Map_{\cal D}(x^{\owedge n}, y)\}_n
\]
makes ${\cal D}$ into a ${\cal V}$-enriched category satisfying the
SM7 axiom.
\end{prop}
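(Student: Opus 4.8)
The plan is to prove the two halves of the statement separately: that $\VMap_{\cal D}$ defines a ${\cal V}$-enrichment, and that this enrichment satisfies SM7. For the enrichment, the idea is to promote the given simplicial enrichment of ${\cal D}$ to a ``multicategorical'' one and restrict to a single object. The unit $\mathbb 1 \to \VEnd_{\cal D}(x)$ is to pick out $\mathrm{id}_x \in \Map_{\cal D}(x,x) = \Map_{\cal D}(x^{\owedge 1},x)$, and the composition
\[
\VMap_{\cal D}(y,z) \circ \VMap_{\cal D}(x,y) \to \VMap_{\cal D}(x,z)
\]
is to be built summand by summand from the well-behaved maps $\Map_{\cal D}(x,y) \times \Map_{\cal D}(x',y') \to \Map_{\cal D}(x \owedge x', y \owedge y')$ of the preceding remark: on the piece of $\bigl(\VMap_{\cal D}(y,z) \circ \VMap_{\cal D}(x,y)\bigr)(m)$ indexed by a point $f$ of $\Map_{\cal D}(y^{\owedge k},z)$, points $g_i$ of $\Map_{\cal D}(x^{\owedge m_i},y)$ with $m_1 + \cdots + m_k = m$, and an element $\sigma \in \Sigma_m$, send $(f;g_1,\dots,g_k;\sigma)$ to
\[
x^{\owedge m} \xrightarrow{\sigma} x^{\owedge m} \cong x^{\owedge m_1} \owedge \cdots \owedge x^{\owedge m_k} \xrightarrow{g_1 \owedge \cdots \owedge g_k} y^{\owedge k} \xrightarrow{f} z.
\]
That this descends to the quotient defining $\circ$ and is $\Sigma_m$-equivariant follows from the equivariance clause of that remark together with the symmetry isomorphisms of $\owedge$; associativity and unitality of the composition are then a formal consequence of the coherence theorem for symmetric monoidal categories and the compatibility axioms packaged into the definition of a simplicial symmetric tensor model category. (Equivalently: the assignment $(x_1,\dots,x_n;y) \mapsto \Map_{\cal D}(x_1 \owedge \cdots \owedge x_n,y)$ is a well-defined simplicially enriched multicategory with object set that of ${\cal D}$, and its endomorphism symmetric sequences are the $\VEnd_{\cal D}(x)$.)

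For SM7, the key observation is that a fibration (resp.\ acyclic fibration) in ${\cal V}$ is precisely a levelwise Kan fibration (resp.\ levelwise acyclic Kan fibration). So, given a cofibration $i\co a \cofb b$ and a fibration $p\co x \fibr y$ in ${\cal D}$, it suffices to check, for each $n$, that
\[
\Map_{\cal D}(b^{\owedge n},x) \to \Map_{\cal D}(a^{\owedge n},x) \mathop\times_{\Map_{\cal D}(a^{\owedge n},y)} \Map_{\cal D}(b^{\owedge n},y)
\]
is a Kan fibration, acyclic if $i$ or $p$ is. But this is exactly the SM7 map of the simplicial model category ${\cal D}$ applied to the pair $(i^{\owedge n},p)$, so by the ordinary SM7 axiom it is enough to know that $i^{\owedge n}\co a^{\owedge n} \to b^{\owedge n}$ is a cofibration, and an acyclic cofibration whenever $i$ is. This I would establish by the standard filtration of an $n$-fold monoidal power: write $a^{\owedge n} = F_0 \to F_1 \to \cdots \to F_n = b^{\owedge n}$, where a cell of $F_k$ records a choice of $k$ of the $n$ tensor factors to be $b$ rather than $a$, so that $F_{k-1} \to F_k$ is a pushout of the map obtained from the $k$-fold iterated pushout product $i^{\square k}$ (whose domain is the sub-object of $b^{\owedge k}$ of tensors with at least one factor from $a$) by smashing with $a^{\owedge(n-k)}$ in the remaining coordinates and taking the coproduct over the $\binom{n}{k}$ subsets of coordinates. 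Iterating the pushout product axiom makes $i^{\square k}$ a cofibration, and an acyclic cofibration when $i$ is; since smashing with a cofibrant object preserves cofibrations (Remark~\ref{rmk:empty-tensor}) and coproducts of cofibrations are cofibrations, each $F_{k-1} \to F_k$, and hence the composite $i^{\owedge n}$, has the same property. The case where $p$ is an acyclic fibration is immediate from the ordinary SM7 axiom.

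I expect the filtration argument to be the main obstacle: describing the relative cell structure on $b^{\owedge n}$ over $a^{\owedge n}$ precisely enough that each attaching map is manifestly built from $i^{\square k}$, and keeping track of the symmetric-group combinatorics, is the only step that is not pure formalism — the enrichment axioms and the reduction of SM7 to a statement about $i^{\owedge n}$ are bookkeeping with the coherence and compatibility isomorphisms. A minor further point is the degree-$0$ part $\Map_{\cal D}(\mathbb I,y)$ of $\VMap_{\cal D}$, whose good behaviour is exactly what cofibrancy of the unit $\mathbb I$ is assumed to provide.
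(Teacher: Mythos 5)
Your proof is correct and is essentially the argument the paper has in mind: the paper's own proof is the single assertion that the result is ``a straightforward consequence of the structure on ${\cal D}$,'' citing exactly the two inputs your write-up makes explicit (Remark~\ref{rmk:empty-tensor} for the filtration/iterated pushout-product step, and Remark~\ref{rmk:cofibrant-unit} for the degree-$0$ level, i.e.\ $\Map_{\cal D}(\mb I,-)$). One caveat: your filtration yields $i^{\owedge n}$ as a(n acyclic) cofibration only when the domain $a$ is cofibrant, since you smash with $a^{\owedge(n-k)}$ and tacitly use its cofibrancy; this restriction is genuine (for non-cofibrant $a$ the conclusion can fail, e.g.\ in simplicial modules over a ring), so it is really a caveat on the SM7 statement in this generality rather than a defect peculiar to your argument, and the paper only ever applies the axiom with cofibrant domains.
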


\begin{proof}
This is a straightforward consequence of the structure on ${\cal
D}$, though it requires Remarks~\ref{rmk:empty-tensor} and
\ref{rmk:cofibrant-unit}.
\end{proof}

\begin{rmk}
In particular, for a map $f\co x \to y$ in ${\cal D}$, the
endomorphism operad $\VEnd_{\cal D}(f)$ is the symmetric sequence
which, in degree $n$, is the pullback of the diagram
\[
\Map_{\cal D}(x^{\owedge n}, x) \to 
\Map_{\cal D}(x^{\owedge n}, y) \leftarrow
\Map_{\cal D}(y^{\owedge n}, y).
\]
\end{rmk}

\begin{rmk}
  While operadic model categories have natural ${\cal V}$-enrichments,
  operadic Quillen adjunctions do not automatically yield ${\cal
    V}$-enriched adjunctions, except in a homotopical sense, unless
  both adjoints are strong monoidal.
\end{rmk}

\subsection{Model structures on operads}

The model structure on the category of symmetric sequences lifts to
one on the category of operads in simplicial sets, with fibrations and
weak equivalences defined levelwise
\cite[3.3.1]{berger-moerdijk-operads}.

This extends to a simplicial model structure.  This exact statement
does not appear to be in the immediately available literature.
However, one can obtain it using either of the following approaches.
\begin{itemize}
\item Rezk's thesis constructs a simplicial model structure on operads
  with weak equivalences and fibrations defined levelwise under an
  {\em equivariant} model structure \cite[3.2.11]{rezk-thesis},
  extending a simplicial model structure on symmetric sequences.  The
  method of proof extends to the Berger-Moerdijk model structure, with
  weak equivalences and fibrations defined to be ordinary {\em
    nonequivariant} weak equivalences and fibrations, by discarding
  some of the generating cofibrations and generating acyclic
  cofibrations.  (This does not alter Rezk's Proposition~3.1.5, the
  main technical tool for proving the result, which uses the existence
  of a functorial levelwise fibrant replacement for simplicial operads
  as in \cite[B2]{schwede-gammaspace}.)
\item Alternatively, we can use the fact that operads can be expressed
  algebraically.  There is a functor which takes an $\mb N$-graded set
  $X = \{X_n\}$ and produces the free operad $\mb O(X)$ on $X$ (which
  can be expressed in terms of rooted trees with nodes appropriately
  labelled by elements of $X$).  The functor $\mb O$ is a monad on
  graded sets whose algebras are discrete operads.  It also commutes with
  filtered colimits, which makes it a {\em multisorted theory} in the
  terminology of \cite{rezk-propermodel}.  One can then apply
  \cite[Theorem 7.1]{rezk-propermodel} to obtain the desired simplicial
  model structure on the category of simplicial ${\mb
  O}$-algebras---i.e., operads in simplicial sets.
\end{itemize}

\subsection{Spaces of algebra structures}
\label{sec:spac-algebra-struct}

In this section, we assume that ${\cal D}$ is an operadic model
category, viewed as a model category enriched in symmetric sequences
of simplicial sets.

From this point forward, we will drop the enriching category from some
of the notation as follows.  For an object $x \in {\cal D}$, the 
endomorphism operad $\End_{\cal D}(x)$ is the symmetric sequence
which, in degree $n$, is the simplicial set $\Map_{\cal D}(x^{\owedge
  n}, x)$.  Similarly, for a map $f\co x \to y$ in ${\cal D}$, we have
the endomorphism operad $\End_{\cal D}(f)$.

\begin{defn}
  For a cofibrant operad ${\cal O}$ and a cofibrant-fibrant object $x
  \in {\cal D}$, the {\em space of ${\cal O}$-algebra structures on
    $x$} is the space of operad maps
\[
\Map_{operad}({\cal O}, \End_{\cal D}(x)).
\]
  For a map $\eta\co x \to y$ between cofibrant-fibrant objects in
  ${\cal D}$, the {\em space of ${\cal O}$-algebra structures on $\eta$}
  is the space of operad maps
\[
\Map_{operad}({\cal O}, \End_{\cal D}(\eta)).
\]
  Equivalently, this is the space of pairs of ${\cal O}$-algebra
  structures on $x$ and $y$ making $\eta$ into a map of ${\cal
  O}$-algebras.
\end{defn}

\begin{cor}
  If ${\cal O}$ is a cofibrant operad, a weak equivalence $f\co x \to
  y$ between cofibrant-fibrant objects in ${\cal D}$ determines an
  isomorphism in the homotopy category between the spaces of ${\cal
    O}$-algebra structures on $x$ and $y$.
\end{cor}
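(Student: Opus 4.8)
The plan is to deduce this from the results already established for $\VEnd_{\cal D}$, combined with the fact that the simplicial mapping object $\Map_{operad}({\cal O},-)$ is homotopically well-behaved on fibrant operads when ${\cal O}$ is cofibrant.

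First I would observe that all the endomorphism operads under consideration are fibrant. For a cofibrant-fibrant object $c$, Remarks~\ref{rmk:empty-tensor} and~\ref{rmk:cofibrant-unit} show that each power $c^{\owedge n}$ is cofibrant, so that $\Map_{\cal D}(c^{\owedge n},c)$ is a fibrant simplicial set; thus $\End_{\cal D}(c)$ is levelwise fibrant, which is exactly fibrancy in the operad model structure recalled above. The same holds for $\End_{\cal D}(\eta)$ when $\eta$ is a map between cofibrant-fibrant objects, since it is then a pullback of a fibration along a map of fibrant simplicial operads. Since operads in simplicial sets form a simplicial model category and ${\cal O}$ is cofibrant, the functor $\Map_{operad}({\cal O},-)$ is right Quillen (its left adjoint $(-)\otimes{\cal O}$ being left Quillen), so by Ken Brown's lemma it carries weak equivalences between fibrant operads to weak equivalences of simplicial sets.

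Next I would build a zig-zag connecting $\End_{\cal D}(x)$ and $\End_{\cal D}(y)$. Factor $f\co x\to y$ as $x\overset{i}{\cofb}w\overset{p}{\fibr}y$; since $x$ is cofibrant and $y$ is fibrant, $w$ is cofibrant-fibrant, and since $f$ is a weak equivalence both $i$ and $p$ are acyclic. Corollary~\ref{cor:enriched-homotopical-end} applied to $p$, together with its evident dual for the acyclic cofibration $i$, produces weak equivalences of (fibrant) operads
\[
\End_{\cal D}(x)\overfrom^\sim\End_{\cal D}(i)\overto^\sim\End_{\cal D}(w)\overfrom^\sim\End_{\cal D}(p)\overto^\sim\End_{\cal D}(y).
\]
Applying $\Map_{operad}({\cal O},-)$ to this chain yields a zig-zag of weak equivalences of simplicial sets between the spaces of ${\cal O}$-algebra structures on $x$ and on $y$, hence an isomorphism in the homotopy category. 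That this isomorphism is canonical, and in particular functorial, follows by combining the derived functor $\mb R\VEnd_{\cal D}$ of Theorem~\ref{thm:homotopy-end} with the derived functor $\mb R\Map_{operad}({\cal O},-)$ on $ho(\VMon)$, which exists precisely because ${\cal O}$ is cofibrant.

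The only real subtlety is this homotopical control on $\Map_{operad}$: it is essential that the endomorphism operads be fibrant in the operad model structure, which is exactly where the hypotheses packaged into an operadic model category---the cofibrancy of the unit and the pushout product axiom, which force $c^{\owedge n}$ to be cofibrant---enter. Everything else is formal, given the results of Section~\ref{sec:enrich-endom}.
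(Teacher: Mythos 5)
Your proposal is correct and follows essentially the paper's route: the paper simply invokes Theorem~\ref{thm:homotopy-end} to get the canonical equivalence of $\End_{\cal D}(x)$ and $\End_{\cal D}(y)$ in the homotopy category of operads and then maps out of the cofibrant ${\cal O}$, and your factorization zig-zag through $\End_{\cal D}(i)$ and $\End_{\cal D}(p)$ just unwinds that theorem via Corollary~\ref{cor:enriched-homotopical-end} and its dual, with the fibrancy and right-Quillen observations making explicit what the paper leaves implicit. One small caveat: your blanket claim that $\End_{\cal D}(\eta)$ is fibrant for an \emph{arbitrary} map $\eta$ between cofibrant-fibrant objects is not justified as stated (neither leg of the defining pullback need be a fibration), but for the acyclic cofibration $i$ and acyclic fibration $p$ you actually use, one leg is a levelwise (acyclic) fibration, so the fibrancy you need does hold and the argument goes through.
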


\begin{proof}
  By Theorem~\ref{thm:homotopy-end}, we find that the operads
  $\End_{\cal D}(x)$ and $\End_{\cal D}(y)$ are canonically equivalent
  in the homotopy category of operads, and the spaces of maps from
  ${\cal O}$ are equivalent.
\end{proof}

\begin{prop}
\label{prop:einfty-unit}
Let $f\co \mb I \cofb {\mb I}_f$ be a fibrant replacement for the unit
object of ${\cal D}$.  Then the space of $E_\infty$-algebra structures
on ${\mb I}_f$ compatible with the multiplication on $\mb I$ is
contractible.
\end{prop}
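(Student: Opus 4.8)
The plan is to realize the space in question as a fiber of an acyclic fibration. Write $f\co\mb I\cofb{\mb I}_f$. Since we are assuming the unit $\mb I$ is cofibrant (Remark~\ref{rmk:cofibrant-unit}), the object ${\mb I}_f$ is cofibrant-fibrant and, by Remark~\ref{rmk:empty-tensor}, every $\owedge$-power of $\mb I$ or of ${\mb I}_f$ is again cofibrant. The first step is to check that $f^{\owedge n}\co\mb I^{\owedge n}\to{\mb I}_f^{\owedge n}$ is an acyclic cofibration for every $n$: writing it as the composite of the maps replacing one tensor factor at a time, each such map has the form $\mathrm{id}\owedge f$ with cofibrant flanking factors, and --- because the product with an initial object is initial --- this is the pushout product of $f$ with a map out of an initial object, hence an acyclic cofibration by the pushout product axiom. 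Since ${\mb I}_f$ is fibrant, the SM7 axiom for the simplicial structure on ${\cal D}$ then shows that precomposition with $f^{\owedge n}$ gives an acyclic fibration $\Map_{\cal D}({\mb I}_f^{\owedge n},{\mb I}_f)\to\Map_{\cal D}(\mb I^{\owedge n},{\mb I}_f)$, $\Sigma_n$-equivariantly and for all $n$.

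Next I would repackage this as a statement about operads. The operad $\End_{\cal D}(f)$ has $\End_{\cal D}(f)(n)=\Map_{\cal D}(\mb I^{\owedge n},\mb I)\times_{\Map_{\cal D}(\mb I^{\owedge n},{\mb I}_f)}\Map_{\cal D}({\mb I}_f^{\owedge n},{\mb I}_f)$ and carries a projection onto $\End_{\cal D}(\mb I)$, hitting $\End_{\cal D}(\mb I)(n)=\Map_{\cal D}(\mb I^{\owedge n},\mb I)$ by the first projection of the pullback. That first projection is the base change of the acyclic fibration from the previous paragraph along postcomposition with $f$, hence is itself an acyclic fibration; so $\End_{\cal D}(f)\to\End_{\cal D}(\mb I)$ is a levelwise acyclic fibration, i.e. an acyclic fibration in the model category of operads. (One cannot simply invoke Corollary~\ref{cor:enriched-homotopical-end} or Proposition~\ref{prop:arrow-homotopy-end} here, because $\mb I$ is not fibrant; the content of the hand computation is precisely that $\Map_{\cal D}$ only needs cofibrancy in its source variable.)

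Finally, fix a cofibrant $E_\infty$-operad ${\cal O}$. The canonical commutative monoid structure on the unit object supplies a map ${\rm Com}\to\End_{\cal D}(\mb I)$, and precomposing with ${\cal O}\to{\rm Com}$ gives the point of $\Map_{operad}({\cal O},\End_{\cal D}(\mb I))$ that represents ``the multiplication on $\mb I$''. The space of $E_\infty$-algebra structures on ${\mb I}_f$ compatible with it is then the (homotopy) fiber of the restriction map
\[
\Map_{operad}({\cal O},\End_{\cal D}(f))\longrightarrow\Map_{operad}({\cal O},\End_{\cal D}(\mb I))
\]
over that point. Applying the SM7 axiom for the simplicial model structure on operads to the cofibration $\emptyset\cofb{\cal O}$ and the acyclic fibration $\End_{\cal D}(f)\fibr\End_{\cal D}(\mb I)$ shows this restriction map is an acyclic fibration of simplicial sets; in particular it is surjective and all of its fibers --- including the one we want --- are contractible, which is the claim. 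The only genuine obstacle in the argument is the bookkeeping forced by the non-fibrancy of $\mb I$; this is dispatched once one knows the powers $f^{\owedge n}$ are still acyclic cofibrations, which is exactly the place where cofibrancy of $\mb I$ enters.
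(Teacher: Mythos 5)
Your proof is correct and follows essentially the same route as the paper: both establish that $\End_{\cal D}(f)\to\End_{\cal D}(\mb I)$ is an acyclic fibration of operads and then identify the space of compatible $E_\infty$-structures with the (contractible) space of lifts of ${\cal O}\to\End_{\cal D}(\mb I)$ through it. The only difference is cosmetic: where the paper invokes a dual formulation of Corollary~\ref{cor:enriched-homotopical-end} applied to the acyclic cofibration $\mb I\cofb\mb I_f$, you verify the same acyclic fibration by hand via the powers $f^{\owedge n}$ and pullback stability, which is a fair (arguably more explicit) rendering of the same step.
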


\begin{proof}
The map $\mb I \cofb \mb I_f$ is an acyclic cofibration between
  cofibrant objects.  The enrichment of the opposite category ${\cal
    D}^{op}$ gives rise to a dual formulation of
  Corollary~\ref{cor:enriched-homotopical-end}, and specifically
  implies that the map $\End_{\cal D}(f) \to \End_{\cal D}(\mb I)$ is
  an acyclic fibration.

  Let ${\cal E}$ be a cofibrant $E_\infty$-operad (a cofibrant
  replacement for ${\rm Com}$), and fix the map ${\cal E} \to {\rm
    Com} \to \End_{\cal D}(\mb I)$ coming from $\mb I$ being the unit.
  Then the space of lifts in the diagram
\[
\xymatrix{
&& \End_{\cal D}(f) \ar@{->>}^\sim[d] \ar[r] & \End_{\cal D}(\mb I_f)\\
{\cal E} \ar@{.>}[urr] \ar[rr] &&\End_{\cal D}(\mb I)
}
\]
is contractible.  However, via the map ${\cal E} \to \End_{\cal D}(\mb
I_f)$, these lifts precisely parametrize $E_\infty$-algebra structures
on $\mb I_f$ which are compatible with the multiplication on $\mb I$.
\end{proof}

Finally, we note that endomorphism operads are invariant under certain
Quillen equivalences.

\begin{prop}
\label{prop:operadicendomorphisms}
  Suppose that $L \colon {\cal D} \rightleftarrows {\cal D}' \colon R$ 
  is an operadic Quillen adjunction.  Then for any cofibrant-fibrant
objects $y \in {\cal D}$ and $x \in {\cal D}'$ with an equivalence
$f\co y \to Rx$, there is a map in the homotopy category of operads
from $\End_{{\cal D}'}(x)$ to $\End_{\cal D}(y)$.

If, in addition, this adjunction is an operadic Quillen equivalence,
this map is an isomorphism in the homotopy category of operads.
\end{prop}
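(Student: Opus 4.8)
The plan is to produce the map in the homotopy category of operads by combining the derived endomorphism functor of Theorem~\ref{thm:homotopy-end} with a comparison map $\End_{{\cal D}'}(x) \to \End_{\cal D}(Rx)$ built from the lax monoidal structure on $R$. First I would construct, for any fibrant $x \in {\cal D}'$, a natural map of operads
\[
\End_{{\cal D}'}(x) \longrightarrow \End_{\cal D}(Rx)
\]
in simplicial sets. In degree $n$ this is the composite
\[
\Map_{{\cal D}'}(x^{\owedge' n}, x) \to \Map_{\cal D}(R(x^{\owedge' n}), Rx) \to \Map_{\cal D}((Rx)^{\owedge n}, Rx),
\]
where the first map applies $R$ on morphism spaces (using that $R$ is a right Quillen functor and hence, in a simplicial model category, induces maps of simplicial mapping objects) and the second precomposes with the lax structure map $(Rx)^{\owedge n} \to R(x^{\owedge' n})$. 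Checking that this assembles into a map of symmetric sequences compatible with the composition product is the only genuinely combinatorial point; it follows from the coherence of the lax monoidal structure on $R$ and the compatibility of the simplicial enrichment with $\owedge$ built into the definition of an operadic model category.

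Next I would pass to homotopy categories. Since $x$ is cofibrant-fibrant in ${\cal D}'$ and $R$ is right Quillen, $Rx$ is fibrant in ${\cal D}$; choose a cofibrant replacement $c \cofb Rx$, which is an acyclic fibration by the usual factorization (or take the given $y \to Rx$ and factor it), so that by Theorem~\ref{thm:homotopy-end} and Corollary~\ref{cor:enriched-homotopical-end} the object $\End_{\cal D}(Rx)$ already represents the derived homotopy type once one knows $Rx$ is fibrant, and $\End_{\cal D}(c)$, $\End_{\cal D}(y)$ are all canonically isomorphic to it in $ho(\VMon)^w$ via the equivalence $f\co y \to Rx$. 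Composing the comparison map above with these canonical isomorphisms yields the desired morphism $\End_{{\cal D}'}(x) \to \End_{\cal D}(y)$ in the homotopy category of operads. Here I would invoke the first axiom of an operadic Quillen adjunction — that $L(x \owedge y) \to L(x) \owedge' L(y)$ is a weak equivalence on cofibrant objects, equivalently a derived statement about $R$ — precisely to guarantee that the lax structure maps are weak equivalences on the relevant (cofibrant) objects, so that the degreewise maps above are weak equivalences and the comparison descends sensibly.

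For the second assertion, assume the adjunction is an operadic Quillen equivalence. Then for cofibrant $w \in {\cal D}$ the derived unit $w \to RL(w)$ (suitably fibrant-replaced) is a weak equivalence, and the derived comparison map $L(w^{\owedge n}) \to (Lw)^{\owedge' n}$ is a weak equivalence by the tensor Quillen equivalence axioms. Running the argument with $w$ a cofibrant replacement of $y$ and tracking the roundtrip $\End_{{\cal D}'}(x) \to \End_{\cal D}(y) \to \End_{{\cal D}'}(Lw) \simeq \End_{{\cal D}'}(x)$ shows the composite is an isomorphism in $ho(\text{operad})$; symmetry of the setup (or the same argument applied to the adjunction in the other direction) gives the reverse composite, so the map is an isomorphism. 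The main obstacle I anticipate is not any single homotopical step but the bookkeeping in the first paragraph: verifying that the degreewise maps $\End_{{\cal D}'}(x)(n) \to \End_{\cal D}(Rx)(n)$ are genuinely $\Sigma_n$-equivariant and strictly compatible with operadic composition $\circ$, since the lax (rather than strong) monoidal structure on $R$ means these squares commute only up to the coherence isomorphisms, and one must check those coherences are exactly what is needed — this is the place where the hypothesis that ${\cal D}$ is a \emph{simplicial symmetric} tensor model category, with the symmetry isomorphism realized through the simplicial tensoring, does the real work.
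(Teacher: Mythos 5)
There is a genuine gap, and it occurs at exactly the point your argument leans on most heavily: the claim that $\End_{\cal D}(Rx)$ ``already represents the derived homotopy type once one knows $Rx$ is fibrant,'' so that $\End_{\cal D}(Rx)$, $\End_{\cal D}(c)$ and $\End_{\cal D}(y)$ are canonically \emph{isomorphic} in the homotopy category of operads. This is false in general: $Rx$ need not be cofibrant, so $(Rx)^{\owedge n}$ need not be cofibrant and $\Map_{\cal D}((Rx)^{\owedge n},Rx)$ is not a derived mapping space; Theorem~\ref{thm:homotopy-end} only identifies $\End_{\cal D}(-)$ with the derived endomorphism operad on cofibrant-fibrant objects, and Corollary~\ref{cor:enriched-homotopical-end} gives a weak equivalence $\End_{\cal D}(f)\to\End_{\cal D}(y)$ only when \emph{both} ends of $f$ are cofibrant. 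The paper's proof is built around this obstruction: it routes through the arrow endomorphism operad $\End_{\cal D}(f)$ for an acyclic fibration $f\co y\fibr Rx$, uses only the half of Corollary~\ref{cor:enriched-homotopical-end} that needs cofibrancy of $y$ (so $\End_{\cal D}(f)\to\End_{\cal D}(Rx)$ is an acyclic fibration), and takes the zig-zag $\End_{{\cal D}'}(x)\to\End_{\cal D}(Rx)\overset{\sim}{\twoheadleftarrow}\End_{\cal D}(f)\to\End_{\cal D}(y)$, never asserting that $\End_{\cal D}(Rx)\simeq\End_{\cal D}(y)$. Your first paragraph can be repaired along these lines (the comparison map from the lax monoidal structure on $R$ is the same as the paper's, and no equivalence hypothesis on $L(x\owedge y)\to Lx\owedge' Ly$ is needed for part one), but as written the ``canonical isomorphisms'' you compose with do not exist, and neither does your claim that the degreewise maps $\Map_{{\cal D}'}(x^{\owedge n},x)\to\Map_{\cal D}((Rx)^{\owedge n},Rx)$ are weak equivalences.

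The second assertion is where the gap becomes fatal. Your roundtrip requires a map of operads $\End_{\cal D}(y)\to\End_{{\cal D}'}(Lw)$, but $L$ carries only an \emph{oplax} comparison $L(w^{\owedge n})\to(Lw)^{\owedge' n}$, which points the wrong way to induce such a map; there is no ``symmetry of the setup,'' since only $R$ is lax symmetric monoidal, so the reverse composite is never constructed and ``tracking the roundtrip'' has nothing to track. The paper avoids this entirely: it forms the pullback ${\cal O}=\End_{\cal D}(f)\times_{\End_{\cal D}(Rx)}\End_{{\cal D}'}(x)$, observes that ${\cal O}\to\End_{{\cal D}'}(x)$ is an equivalence (pullback of an acyclic fibration), and then shows ${\cal O}\to\End_{\cal D}(y)$ is a levelwise equivalence by identifying, in degree $n$, the relevant leg with the composite $\Map_{{\cal D}'}(x^{\owedge n},x)\to\Map_{{\cal D}'}((Ly)^{\owedge n},x)\to\Map_{{\cal D}'}(L(y^{\owedge n}),x)\cong\Map_{\cal D}(y^{\owedge n},Rx)$, which is an equivalence precisely because $Ly\to x$ is a weak equivalence of cofibrant objects with $x$ fibrant (the Quillen equivalence) and $L(y^{\owedge n})\to(Ly)^{\owedge n}$ is an equivalence of cofibrant objects (the tensor Quillen adjunction axiom), finishing by right properness of simplicial sets. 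Note that the equivalence obtained this way is between $\Map_{{\cal D}'}(x^{\owedge n},x)$ and $\Map_{\cal D}(y^{\owedge n},Rx)$ --- not the homotopically meaningless $\Map_{\cal D}((Rx)^{\owedge n},Rx)$ your argument targets --- so the pullback device is not optional bookkeeping but the step that makes the degreewise comparison well posed.
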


\begin{proof}
Using Proposition~\ref{prop:enriched-homotopical-map}, we may assume
that the equivalence $f$ is an acyclic fibration.

Since $R$ has a simplicial lift which is lax symmetric monoidal, we
obtain a natural map
\[
\End_{\cal D'}(x) \to \End_{\cal D}(Rx)
\]
of operads.  By Corollary~\ref{cor:enriched-homotopical-end}, we
have an acyclic fibration $\End_{\cal D}(f) \overset{\sim}{\fibr} \End_{\cal
D}(Rx)$.  The composite
\[
\End_{\cal D'}(x) \to \End_{\cal D}(Rx) \mathop\twoheadleftarrow^\sim \End_{\cal
  D}(f) \to \End_{\cal D}(y)
\]
provides the desired map in the homotopy category of operads.

Now we further assume that the adjunction is an operadic Quillen
equivalence. Form the pullback
\[
{\cal O} = \End_{\cal D}(f) \mathop\times_{\End_{\cal D} (Rx)} \End_{\cal D'}(x).
\]
The map ${\cal O} \to \End_{\cal D'}(x)$ is a weak equivalence.
To complete the proof it therefore suffices to show that the map
${\cal O} \to \End_{\cal D}(y)$ is a weak equivalence. 

In degree $n$, ${\cal O}$ is the pullback of the diagram
\[
\Map_{\cal D}(x^{\owedge n}, x) \fibr \Map_{\cal D}(x^{\owedge n}, Ry)
\leftarrow \Map_{\cal D'}(y^{\owedge n},y),
\]
so it suffices to show that the right-hand map is an equivalence.
However, this map is the composite
\[
\Map_{\cal D'}(y^{\owedge n}, y) \to \Map_{\cal D'}((Lx)^{\owedge n},
y) \to \Map_{\cal D'}(L(x^{\owedge n}), y) \cong \Map_{\cal
  D}(x^{\owedge n}, Ry).
\]
The first of these maps is an equivalence because $y$ is
cofibrant-fibrant and $Lx$ is cofibrant, while the second is an
equivalence because $L(x^{\owedge n}) \to (Lx)^{\owedge n}$ is an
equivalence in ${\cal D}'$ between cofibrant objects (by definition of
a tensor Quillen adjunction).
\end{proof}

\section{Algebra structures on rigid objects}
\label{sec:rigid}

In this section we assume that ${\cal D}$ is an operadic model
category.  Our goal is to prove a rigidity result
(Theorem~\ref{thm:rigidlifting}) allowing us to lift algebra
structures, as mentioned in the introduction.  In order for this to be
ultimately applicable to pro-objects, we will first need to develop a
theory which applies when the tensor structure carries something
weaker than a right adjoint.

\subsection{Weak function objects}
\label{sec:weak-funct-objects}

\begin{defn}
\label{def:weakfunction}
A {\em weak function object} for the homotopy category $ho({\cal D})$ is a
functor
\[
F^{weak}(-,-)\co ho({\cal D})^{op} \times ho({\cal D}) \to ho({\cal
  D})
\]
equipped with a natural transformation of functors
\[
\mb R\Map_{\cal D}(x \owedge^{\mb L} y, z) \to \mb R\Map_{\cal D}(x,F^{weak}(y,z)) 
\]
in the homotopy category of spaces.  For specific $x$, $y$, and $z$
such that this map is an isomorphism in the homotopy category of
spaces, we will say that the weak function object {\em provides an
  adjoint} for maps $x \owedge^{\mb L} y \to z$.
\end{defn}

\begin{exam}
  Suppose the tensor model category $\mathcal{D}$ is closed, and use
  $F_{\cal D}(x,y)$ to denote the internal function object in
  $\mathcal{D}$.  Then for any $x$ cofibrant in $\mathcal{D}$, the
  functor $(-) \owedge x \co \mathcal{D} \to \mathcal{D}$ is a left
  Quillen functor, the adjoint $F_{\cal D}(x,-) \co \mathcal{D} \to
  \mathcal{D}$ is the corresponding right Quillen functor, and these
  determine an adjunction on the homotopy category. It follows that
  given arbitrary $x$ and $y$ in $ho(\mathcal{D})$, if $F^{weak}(x,y)$
  is defined to be the image of $F_{\cal D}(x_c, y_f)$, where $x_c$ and
  $y_f$ are cofibrant and fibrant representatives of $x$ and $y$
  respectively, then $\mathcal{D}$ has a weak function object that
  provides an adjoint for $x \owedge^\mathbb{L} y \to z$ for all $x,
  y, z \in ho(\mathcal{D})$.
\end{exam}

\begin{rmk}
We have the following consequences of Definition~\ref{def:weakfunction}.
  \begin{itemize}
  \item Substituting $x = \mb I$, we obtain a natural transformation
\[
\mb R\Map_{\cal D}(y, z) \to \mb R\Map_{\cal D}(\mb I,F^{weak}(y,z)).
\]
  \item Substituting $x = z$ and $y = \mb I$, the image of the natural
    isomorphism $x \owedge^{\mb L} \mb I \to x$ is a homotopy class of
    map $x \to F^{weak}(\mb I,x)$.  If this is a natural isomorphism,
    we refer to the weak function object as {\em unital}.
  \item Given a map $f\co x \to x'$ between objects, the natural
    transformation of functors
    \[
    F^{weak}(x',-) \to F^{weak}(x,-)
    \]
    will be referred to as the {\em map induced by} $f$ and denoted by
    $f^*$.  Similarly, the natural transformation
    \[
    F^{weak}(-,x) \to F^{weak}(-,x')
    \]
    will be denoted by $f_*$.
  \item If the weak function object provides an adjoint for
    $F^{weak}(y,z) \owedge^{\mb L} y \to z$, the identity self-map of
    $F^{weak}(y,z)$ lifts to a natural evaluation map $F^{weak}(y,z)
    \owedge^{\mb L} y \to z$ in the homotopy category of ${\cal D}$.
  \end{itemize}
\end{rmk}

\subsection{Rigid objects}

\begin{defn}
\label{def:rigid}
  Suppose that ${\cal D}$ has a weak function object.  A
  map $\eta\co x \to y$ in the homotopy category of ${\cal D}$ is {\em
    rigid} if the map
\[
\eta^*\co F^{weak}(y,y) \to F^{weak}(x,y)
\]
  is a weak equivalence.
\end{defn}

\begin{thm}
\label{thm:rigidlifting}
  Suppose that $\eta\co x \to y$ is a rigid map in $ho({\cal D})$.  In
  addition, suppose that for any $n, m \geq 0$, the weak function
  object provides adjoints for
\begin{align*}
(x^{\owedge^{\mb L} n} \owedge^{\mb L} y^{\owedge^{\mb L} m})
\owedge^{\mb L} x &\to y\text{ and}\\
(x^{\owedge^{\mb L} n} \owedge^{\mb L} y^{\owedge^{\mb L} m})
\owedge^{\mb L} y &\to y.
\end{align*}
  Then the map of operads $\mb R\End_{\cal D}(\eta) \to \mb
  R\End_{\cal D}(x)$ is an equivalence.

  In particular, if ${\cal O}$ is a cofibrant operad and $x$ is
  equipped with a homotopy class of ${\cal O}$-algebra structure
  $\theta\co {\cal O} \to \mb R\End_{\cal D}(x)$, the homotopy fiber over
  $\theta$ of the map
\[
\mb R\Map_{operad}({\cal O},\mb R\End_{\cal D}(\eta)) 
\to \mb R\Map_{operad}({\cal O}, \mb R\End_{\cal D}(x))
\]
is contractible. 
\end{thm}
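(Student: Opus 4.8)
The plan is to compute the endomorphism operad $\mb R\End_{\cal D}(\eta)$ degreewise and show that its comparison map to $\mb R\End_{\cal D}(x)$ is a levelwise weak equivalence of symmetric sequences; the statement about operad mapping spaces then follows formally from the fact that ${\cal O}$ is cofibrant (so that $\mb R\Map_{operad}({\cal O},-)$ sends levelwise equivalences of fibrant operads to equivalences, and a fibration to a fibration, so the homotopy fiber over $\theta$ is identified with the space of lifts). First I would recall from the remark following the proposition on $\VMap$-enrichments that, in degree $n$, $\mb R\End_{\cal D}(\eta)$ is modeled by the pullback of
\[
\Map_{\cal D}(x^{\owedge n},x) \to \Map_{\cal D}(x^{\owedge n},y) \leftarrow \Map_{\cal D}(y^{\owedge n},y),
\]
computed on a cofibrant-fibrant model of $\eta$; the map to $\mb R\End_{\cal D}(x)$ in degree $n$ is the projection to the first factor. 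Since the first map is a fibration between fibrant objects (SM7, as $x^{\owedge n}$ is cofibrant and $x \fibr y$ may be taken fibrant), it suffices to show the second map $\Map_{\cal D}(y^{\owedge n},y) \to \Map_{\cal D}(x^{\owedge n},y)$ is a weak equivalence for every $n \geq 0$; then the pullback projects by an equivalence onto the first factor.

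The heart of the argument is to reduce this statement about $n$-ary mapping spaces to the $1$-ary rigidity hypothesis on $\eta$, and this is where the hypothesis on adjoints for the mixed tensors $(x^{\owedge^{\mb L} n}\owedge^{\mb L} y^{\owedge^{\mb L} m})\owedge^{\mb L} x \to y$ and $(\cdots)\owedge^{\mb L} y \to y$ enters. The idea is an induction peeling off one tensor factor at a time: using the weak function object, $\mb R\Map_{\cal D}(x^{\owedge n}, y) \simeq \mb R\Map_{\cal D}(x^{\owedge(n-1)}, F^{weak}(x,y))$ and likewise $\mb R\Map_{\cal D}(y^{\owedge n},y)$ relates to $F^{weak}(y,y)$ in the last slot; rigidity of $\eta$ gives $F^{weak}(y,y)\xrightarrow{\sim} F^{weak}(x,y)$, so replacing the outermost $y$ by $x$ in the target slot is an equivalence. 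Iterating — at each stage the mixed-tensor adjunction hypothesis supplies the needed adjoint so that the comparison of mapping spaces is governed by a single application of $\eta^*$ on a weak function object — one converts $\Map_{\cal D}(y^{\owedge n},y)$ into $\Map_{\cal D}(x^{\owedge n},y)$ through a chain of $n$ equivalences. One must be careful to arrange the induction so that at the step replacing the $k$-th factor, the remaining factors read $x^{\owedge(k-1)}\owedge y^{\owedge(n-k)}$, which is exactly the shape $x^{\owedge^{\mb L} n'}\owedge^{\mb L} y^{\owedge^{\mb L} m'}$ appearing in the hypothesis; the two families of adjoints (ending in $x$ versus ending in $y$) are precisely what is needed to handle the source and target of each intermediate comparison map.

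The main obstacle I expect is the bookkeeping in this induction: the weak function object is only a functor on the homotopy category with a lax (not invertible) adjunction natural transformation, so every manipulation must be phrased as "the adjunction is an isomorphism for these specific objects," and one has to verify that the composite of the natural transformations one writes down really is the map $\Map_{\cal D}(y^{\owedge n},y)\to\Map_{\cal D}(x^{\owedge n},y)$ induced by $\eta$ and not merely abstractly equivalent to it. Concretely, I would set up a zig-zag of objects $F^{weak}(y^{\owedge k}\owedge x^{\owedge(n-k)}, y)$ for $0\le k\le n$ (or rather their $\mb I$-mapping spaces), show consecutive terms are connected by equivalences built from $\eta^*$ and the provided adjoints, and check naturality so that the total composite agrees with the restriction-along-$\eta^{\owedge n}$ map. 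Once the degreewise equivalence $\mb R\End_{\cal D}(\eta)\to\mb R\End_{\cal D}(x)$ is established, the final clause is immediate: choosing a fibrant-in-operads model, the map is a fibration and a weak equivalence of fibrant operads, ${\cal O}$ is cofibrant, so $\mb R\Map_{operad}({\cal O},\mb R\End_{\cal D}(\eta))\to \mb R\Map_{operad}({\cal O},\mb R\End_{\cal D}(x))$ is an acyclic fibration, hence all its homotopy fibers — in particular the one over $\theta$ — are contractible.
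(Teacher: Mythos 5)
Your proposal is correct and follows essentially the same route as the paper: represent $\eta$ by a fibration between cofibrant--fibrant objects, use the rigidity of $\eta$ together with the two families of adjoints to show by induction (replacing one tensor factor at a time, comparing $\Map_{\cal D}(x^{\owedge n} \owedge y^{\owedge (m+1)},y)$ with $\Map_{\cal D}(x^{\owedge (n+1)} \owedge y^{\owedge m},y)$ through the weak function object) that $(\eta^{\owedge n})^*$ is an equivalence, then conclude via the pullback description of $\End_{\cal D}(\eta)$, one leg being a fibration, that the forgetful map of operads is a levelwise equivalence, and deduce the contractibility of the fiber by a lifting argument. The bookkeeping concern you flag (that the zig-zag composite really is restriction along $\eta^{\owedge n}$) is handled in the paper exactly as you propose, by identifying the top map of each comparison square as $(1\owedge\eta\owedge 1)^*$.
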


\begin{proof}
  By Corollary~\ref{cor:enriched-homotopical-end} we can represent
  $\eta$ by a fibration $\eta\co x \fibr y$ between cofibrant-fibrant
  objects in ${\cal D}$.  This implies that the iterated tensor powers
  $x^{\owedge n}$ and $y^{\owedge n}$ are also cofibrant by
  Remark~\ref{rmk:empty-tensor} (with $n=0$ true by assumption on
  ${\cal D}$).

  We then apply the rigidity of $\eta$ and the adjoints provided by
  the weak function object to find that in the diagram of spaces
\[
\xymatrix{
\Map_{\cal D}(x^{\owedge n} \owedge y^{\owedge (m+1)},y) \ar[r]^-{(1 \owedge
  \eta \owedge 1)^*} \ar[d]_\sim &
\Map_{\cal D}(x^{\owedge (n+1)} \owedge y^{\owedge m},y) \ar[d]^\sim\\
\mb R\Map_{\cal D}(x^{\owedge n} \owedge y^{\owedge m}, F^{weak}(y,y))
\ar[r]^-\sim&
\mb R\Map_{\cal D}(x^{\owedge n} \owedge y^{\owedge m}, F^{weak}(x,y)),\\
}
\]
  the top map is a weak equivalence.  In particular, we find by
  induction that
\[
(\eta^{\owedge n})^*\co \Map_{\cal D}(y^{\owedge n},y) \to \Map_{\cal D}(x^{\owedge n},y)
\]
  is a weak equivalence for all $n$.  (Moreover, the source and target
  of $(\eta^{\owedge n})^*$ represent derived function spaces.)

  The endomorphism operad $\End_{\cal D}(\eta)$ is the symmetric
  sequence which, in degree $n$, is the pullback of the diagram
\[
\Map_{\cal D}(x^{\owedge n}, x) \fibr
\Map_{\cal D}(x^{\owedge n},y)
\overfrom^{\sim} \Map_{\cal D}(y^{\owedge n},y).
\]
  In each degree $\End_{\cal D}(\eta)(n)$ is a homotopy pullback of the
  above diagram because one of the maps is a fibration.  As the other
  map in this diagram is an equivalence and simplicial sets are right
  proper, we find that the ``forgetful'' map of operads $\End_{\cal
  D}(\eta) \to \End_{\cal D}(x)$ is a levelwise weak equivalence as
  desired.
  
  For any ${\cal O}$-algebra structure $\theta\co {\cal O}
  \to \End_{\cal D}(x)$, the weak equivalence $\End_{\cal D}(\eta)
  \to \End_{\cal D}(x)$ implies that the homotopy fiber over $\theta$
  is contractible, or equivalently that the space of homotopy lifts in
  the diagram
\[
\xymatrix{
  & \End_{\cal D}(\eta) \ar[d]^\sim\\
  {\cal O} \ar[r] \ar@{.>}[ur]
  & \End_{\cal D}(x)
}
\]
  is contractible as well.
\end{proof}

As a consequence of Proposition~\ref{prop:einfty-unit}, we have the
following.
\begin{cor}
  \label{cor:rigid-einfty}
  Suppose $\eta\co \mb I \to y$ is a rigid map in $ho({\cal D})$, and
  that for any $n \geq 0$ the weak function object provides adjoints
  for the maps 
  \begin{align*}
    y^{\owedge^{\mb L} n} \owedge^{\mb L} y &\to y \text{ and}\\
    y^{\owedge^{\mb L} n} \owedge^{\mb L} \mb I &\to y.
  \end{align*}
  For any cofibrant $E_\infty$-operad ${\cal E}$, the space of
  extensions to an action of ${\cal E}$ on $y$ making $\eta$ into an
  $E_\infty$-algebra map is contractible.
\end{cor}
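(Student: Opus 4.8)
The plan is to deduce this directly from Theorem~\ref{thm:rigidlifting} applied to $\eta \co \mb I \to y$, combined with Proposition~\ref{prop:einfty-unit}. First I would observe that the hypotheses of Theorem~\ref{thm:rigidlifting} are precisely met with $x = \mb I$: rigidity of $\eta$ is assumed, and since $\mb I^{\owedge^{\mb L} n} \owedge^{\mb L} \cdots$ simplifies (up to the canonical unit equivalences) to the corresponding tensor power of $y$ or to $y^{\owedge^{\mb L} m} \owedge^{\mb L} \mb I$, the two families of adjoints postulated in the corollary supply exactly the adjoints required by the theorem. Thus Theorem~\ref{thm:rigidlifting} tells us that $\mb R\End_{\cal D}(\eta) \to \mb R\End_{\cal D}(\mb I)$ is an equivalence of operads, and that for any cofibrant operad ${\cal O}$ and any homotopy class of ${\cal O}$-algebra structure $\theta$ on $\mb I$, the homotopy fiber over $\theta$ of
\[
\mb R\Map_{operad}({\cal O}, \mb R\End_{\cal D}(\eta)) \to \mb R\Map_{operad}({\cal O}, \mb R\End_{\cal D}(\mb I))
\]
is contractible.

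Next I would supply the relevant $\theta$. Take ${\cal O} = {\cal E}$, a cofibrant $E_\infty$-operad. The unit object $\mb I$ carries its canonical commutative multiplication, i.e.\ the map ${\rm Com} \to \End_{\cal D}(\mb I)$ of Proposition~\ref{prop:einfty-unit}; composing with ${\cal E} \to {\rm Com}$ gives a distinguished homotopy class $\theta \co {\cal E} \to \mb R\End_{\cal D}(\mb I)$. (Strictly, $\mb I$ need not be fibrant, so to speak of $\mb R\End_{\cal D}$ one passes to a fibrant replacement $\mb I_f$; by Proposition~\ref{prop:einfty-unit} the space of $E_\infty$-structures on $\mb I_f$ compatible with the multiplication on $\mb I$ is contractible, so there is a canonical such $\theta$, and by Theorem~\ref{thm:homotopy-end} the operad $\mb R\End_{\cal D}(\mb I_f)$ represents the derived homotopy type, so we lose nothing.) The homotopy fiber over this $\theta$ is then precisely the space of extensions of the ${\cal E}$-action to $\eta$ making it an $E_\infty$-algebra map, since a point of $\mb R\Map_{operad}({\cal E}, \mb R\End_{\cal D}(\eta))$ is exactly a pair of compatible ${\cal E}$-algebra structures on $\mb I$ (equivalently $\mb I_f$) and on $y$ together with the data that $\eta$ respects them, and we are fixing the structure on the source to be $\theta$. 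By Theorem~\ref{thm:rigidlifting} this homotopy fiber is contractible, which is the claim.

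The only real subtlety—and the step I expect to require the most care—is the bookkeeping around the non-fibrant unit: reconciling "$E_\infty$-structure on $y$ making $\eta \co \mb I \to y$ an algebra map" with "lift of a fixed structure $\theta$ on a fibrant replacement $\mb I_f$ along $\mb R\End_{\cal D}(\eta) \to \mb R\End_{\cal D}(\mb I_f)$." This is handled by factoring $\eta$ through an acyclic cofibration $\mb I \cofb \mb I_f$ followed by a fibration $\mb I_f \fibr y$ between cofibrant-fibrant objects (using Remark~\ref{rmk:cofibrant-unit} that $\mb I$ is cofibrant, and Proposition~\ref{prop:enriched-homotopical-map} to arrange the fibration), and invoking Proposition~\ref{prop:einfty-unit} to see that the contractible choice of $\theta$ on $\mb I_f$ is the correct one to fix. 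Everything else is a direct citation of Theorem~\ref{thm:rigidlifting}.
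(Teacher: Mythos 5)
Your proposal is correct and is exactly the argument the paper intends: the corollary is stated without a written-out proof as ``a consequence of Proposition~\ref{prop:einfty-unit},'' meaning precisely your combination of Theorem~\ref{thm:rigidlifting} applied with $x = \mb I$ (the corollary's adjoint hypotheses reducing, via the unit isomorphisms, to those of the theorem) and Proposition~\ref{prop:einfty-unit} supplying the canonical $E_\infty$-structure $\theta$ on the fibrant replacement of the unit over which the contractible homotopy fiber is taken. Your care about the non-fibrant unit is the right bookkeeping and is what Proposition~\ref{prop:einfty-unit} exists to handle.
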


\section{Pro-objects}
\label{sec:pro-objects}

We first recall the basics on pro-objects in a category ${\cal C}$.

\begin{defn}
  For a category ${\cal C}$, the {\em pro-category} $\proP{\cal C}$ is
  the category of cofiltered diagrams $X = \{x_\alpha\}_\alpha$ of objects of
  ${\cal C}$, with maps $X \to Y = \{y_\beta\}_\beta$ defined by 
\[
\Hom_{\proP{\cal C}}(X, Y) = \lim_\beta
\colim_\alpha \Hom_{\cal C}(x_\alpha, y_\beta).
\]
  For two cofiltered systems $X$ and $Y$ indexed by the same category, a
  {\em level map} $X \to Y$ is a natural transformation of diagrams; any map is
  isomorphic in the pro-category to a level map
  \cite[Appendix~3.2]{artin-mazur-etale}.

  A map $X \to Y$ of pro-objects satisfies a property {\em essentially
  levelwise} if it is isomorphic to a level map such that each
  component $x_\alpha \to y_\alpha$ satisfies this property.
\end{defn}

\begin{rmk}
\label{rmk:replace-index}
For any cofiltered index category $J$, there exists a final map $I \to
J$ where $I$ is a cofinite directed set
\cite[2.1.6]{edwards-hastings-cech}. This allows us to replace any
pro-object by an isomorphic pro-object indexed on a cofinite directed
set.
\end{rmk}

\subsection{Model structures}
We now recall the strict model structure on pro-objects from
\cite{isaksen-strict}.
\begin{defn}[{\cite[3.1, 4.1, 4.2]{isaksen-strict}}]
  Suppose ${\cal C}$ is a model category.  A map $X \to Y$ in
  $\proP{\cal C}$ is:
  \begin{itemize}
  \item a {\em strict weak equivalence} if it is an essentially
    levelwise weak equivalence;
  \item a {\em strict cofibration} if it is an essentially levelwise
    cofibration;
  \item a {\em special fibration} if it is isomorphic to a level map
    $\{x_\alpha \to y_\alpha\}_\alpha$ indexed by a cofinite directed set
    such that, for all $\alpha$, the relative matching map
    \[
    x_\alpha \to (\lim_{\beta < \alpha} x_\beta) \times_{\lim_{\beta <
        \alpha} y_\beta} y_\alpha
    \]
    is a fibration;
  \item a {\em strict fibration} if it is a retract of a special
    fibration.
  \end{itemize}
\end{defn}

\begin{rmk}
  \label{rmk:injectivemodel}
  If $I$ is cofinite directed, the category of $I$-diagrams admits an
  injective model structure (equivalently, a Reedy model structure)
  where weak equivalences and cofibrations are defined levelwise
  \cite[5.1.3]{hovey-modelcategories}, \cite[\S
  3.2]{edwards-hastings-cech}.  In this structure, the fibrations are
  precisely those maps satisfying the condition in the definition of a
  special fibration, and fibrant objects are also levelwise fibrant.

  By Remark~\ref{rmk:replace-index}, every pro-object $X$ can be
  reindexed to an isomorphic pro-object $X'$ indexed by a cofinite
  directed set.  There is then a levelwise acyclic cofibration $X' \to
  X_f$ where $X_f$ is an injective fibrant diagram, and hence
  represents a strict fibrant replacement; in addition, there is an
  injective fibration $X_c \to X'$ which is a levelwise weak
  equivalence, where $X_c$ is levelwise cofibrant, which represents a
  strict cofibrant replacement.
\end{rmk}

\begin{thm}[{\cite[4.15]{isaksen-strict}}]
  If ${\cal C}$ is a proper model category, then the classes of strict
  weak equivalences, strict cofibrations, and strict fibrations define
  a proper model structure on $\proP{\cal C}$.
\end{thm}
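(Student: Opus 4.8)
The plan is to verify the model-category axioms MC1--MC5 together with left and right properness for $\proP{\cal C}$, in every case reducing a statement about pro-objects to a levelwise statement about diagrams over a cofinite directed set. Two tools do almost all of the work. First, by Remark~\ref{rmk:replace-index} any pro-object, and more generally (by a simultaneous reindexing argument of the kind going back to \cite[Appendix~3.2]{artin-mazur-etale} and \cite[\S 2.1]{edwards-hastings-cech}) any finite diagram of pro-objects connected by level maps, can be rewritten up to isomorphism as a diagram of level maps indexed by a single cofinite directed set $I$; moreover a map that was essentially levelwise-$P$ can be arranged to be genuinely levelwise-$P$ for each property $P$ of interest. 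Second, by Remark~\ref{rmk:injectivemodel} the category ${\cal C}^I$ carries the injective (Reedy) model structure whose cofibrations and weak equivalences are levelwise and whose fibrations are exactly the level maps satisfying the relative matching condition in the definition of a special fibration; hence an injective fibration descends to a strict fibration, and an injective acyclic fibration --- which, by a short induction on $I$ reducing the relative matching condition to a levelwise one, is levelwise an acyclic fibration --- descends to a strict acyclic fibration.

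Granting these, MC1 is the standard fact that $\proP{\cal C}$ is complete and cocomplete (the latter because ${\cal C}$, being a model category, is finitely cocomplete). For MC2, a composable pair of maps is put in level form over a common $I$ with the two assumed weak equivalences made levelwise, and two-out-of-three in ${\cal C}$ is applied degreewise. For MC3, strict fibrations are closed under retracts by their definition, while for strict cofibrations and strict weak equivalences one invokes the lemma that a retract in $\proP{\cal C}$ of an essentially levelwise-$P$ map is again essentially levelwise-$P$ whenever $P$ is closed under retracts in ${\cal C}$, proved by reindexing the retract square to level form. For MC5, a given map is put in level form $f\co X \to Y$ over a cofinite directed $I$; applying the (acyclic-cofibration, fibration) factorization of the injective structure on ${\cal C}^I$ gives $X \to Z$ a levelwise acyclic cofibration, hence a strict acyclic cofibration, and $Z \to Y$ an injective fibration, hence a strict fibration, while applying the (cofibration, acyclic-fibration) factorization gives $X \to Z$ a levelwise cofibration, hence a strict cofibration, and $Z \to Y$ an injective acyclic fibration, hence, as noted above, a strict acyclic fibration. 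These factorizations are not functorial on $\proP{\cal C}$, since the choice of reindexing is not, which is consistent with the set-theoretic caveats recorded in the introduction.

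The lifting axiom MC4 starts from a strict cofibration $A \to B$ and a strict acyclic fibration $X \to Y$ (respectively a strict acyclic cofibration and a strict fibration) together with a commuting square. One first uses that a lifting problem against a retract is solved once it is solved against the ambient map, in order to replace the strict fibration by a special fibration it is a retract of; one then reindexes the whole square to level form over a single cofinite directed $I$ so that the cofibration is a levelwise cofibration (acyclic in the second situation) and the fibration is, up to one further retract, a special fibration over that same $I$; and the lift is then produced degreewise from the injective model structure on ${\cal C}^I$, the residual retract being disposed of by the retract observation once more. Left and right properness go the same way: after common reindexing a pushout along a strict cofibration and a pullback along a strict fibration in $\proP{\cal C}$ are computed levelwise, a special fibration is in particular a levelwise fibration, so properness of ${\cal C}$ yields the conclusion one level at a time; the ``retract of a special fibration'' clause is absorbed using that strict weak equivalences are closed under retracts by MC3.

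The only real obstacle here is organisational rather than homotopical: essentially every step demands a single cofinite directed index category over which several pro-objects and maps simultaneously become level maps \emph{carrying prescribed levelwise properties}, and verifying that this can always be arranged --- so that neither ``essentially levelwise'' nor ``retract of a special fibration'' ever obstructs a degreewise argument --- is where the genuine effort lies. Once that bookkeeping is in place, the homotopy-theoretic content is nothing more than the model-category axioms and the properness of ${\cal C}$ applied level by level through the injective model structure of Remark~\ref{rmk:injectivemodel}.
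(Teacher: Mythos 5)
This theorem is not proved in the paper at all: it is quoted verbatim from Isaksen's \emph{Strict model structures for pro-categories}, so the benchmark for your sketch is Isaksen's argument, and measured against it your outline has a genuine gap rather than just deferred bookkeeping. The step you repeatedly assume --- that several pro-maps, or one pro-map with several ``essentially levelwise'' properties, can be simultaneously reindexed so that all the prescribed properties hold \emph{in a single level presentation} --- is precisely the mathematical heart of the theorem, not an organisational afterthought, and in several places it is false to treat it as automatic. For MC4 you need that a strict cofibration which is also a strict weak equivalence admits a presentation that is levelwise an \emph{acyclic} cofibration, and dually that a strict fibration which is a strict equivalence is (up to retract) a special fibration whose relative matching maps are acyclic fibrations; these are exactly the results cited elsewhere in this paper as \cite[4.13, 4.14]{isaksen-strict}, and their proofs are where properness of ${\cal C}$ is actually used. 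Your sketch never invokes properness anywhere in MC1--MC5 (only in the final properness claim), so if it were complete it would prove the strict model structure for an arbitrary model category --- which Isaksen's methods do not give and which is not known. The same unjustified simultaneity appears in MC2: knowing $f$ and $gf$ (say) are essentially levelwise weak equivalences gives you two \emph{different} level presentations, and there is no a priori way to choose one presentation of the composable pair in which both are levelwise equivalences; the closure of essentially levelwise classes under composition and two-out-of-three is a substantive chunk of Isaksen's paper.

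The retract axiom has the same defect in a sharper form: after reindexing a retract diagram to level form, the identities $r\circ i = \mathrm{id}$ hold only as maps of pro-objects, i.e.\ only after passing to further structure maps, so the components $x_\alpha \to y_\alpha$ are \emph{not} retracts of the components of the ambient map, and the proposed ``apply closure of $P$ under retracts degreewise'' argument does not run; Isaksen needs a separate argument for this closure property as well. (A smaller point: cocompleteness of $\proP{\cal C}$ does not follow merely from finite cocompleteness of ${\cal C}$ as you assert; one needs the stronger statements about limits and colimits in pro-categories that Isaksen establishes separately.) The parts of your outline that do work --- the factorizations via the injective structure on ${\cal C}^I$ of Remark~\ref{rmk:injectivemodel}, and levelwise verification of properness once everything is in good level form --- are indeed how the Edwards--Hastings/Isaksen proof proceeds, but the theorem's difficulty lives exactly in the reindexing statements you set aside, and one of them cannot be proved without the properness hypothesis you never use.
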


If ${\cal C}$ has a simplicial enrichment, we can extend this notion
to the category $\proP{\cal C}$.

\begin{defn}[{\cite[\S 4.1]{isaksen-strict}}]
  Let ${\cal C}$ be a simplicial model category.  For objects $X$ and $Y$
  in $\proP{\cal C}$, we define the mapping simplicial set by
\[
\Map_{\proP{\cal C}}(X, Y) = \lim_\beta \colim_\alpha \Map_{\cal
  C}(x_\alpha, y_\beta).
\]
  For $X \in \proP{\cal C}$, the tensor and cotensor with a finite
  simplicial set $K$ are defined levelwise, and for arbitrary $K$ using
  limits and colimits in the pro-category.
\end{defn}

\begin{rmk}
  As stated in the introduction, it is important to remember that
  limits and colimits of pro-objects cannot be formed levelwise (even
  for systems of level maps).  In particular, for infinite complexes
  $K$ the levelwise tensor and cotensor generally do not represent the
  tensor and cotensor in $\proP{\cal C}$.
\end{rmk}

\begin{thm}[{\cite[4.17]{isaksen-strict}}]
  If ${\cal C}$ is a proper simplicial model category, then the
  strict model structure on $\proP{\cal C}$ is also a simplicial model
  structure.
\end{thm}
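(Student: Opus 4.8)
The plan is to verify the two defining ingredients of a simplicial model category separately: that $\proP{\cal C}$ is simplicially enriched, tensored, and cotensored compatibly with the coherence isomorphisms, and that the strict model structure satisfies the SM7 pullback-power axiom against this simplicial structure. For the enrichment, the one delicate point is that the tensor and cotensor with an \emph{infinite} simplicial set $K$ must be formed using colimits and limits in $\proP{\cal C}$, which are not levelwise. First I would write $K = \colim_i K_i$ as the filtered colimit of its finite subcomplexes and set $X \otimes K = \colim_i (X \otimes K_i)$ and $X^K = \lim_i X^{K_i}$ in $\proP{\cal C}$, where the tensor and cotensor with the finite $K_i$ are the (levelwise) finite colimit and limit. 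Then I would check the adjunction isomorphisms
\[
\Map_{\proP{\cal C}}(X \otimes K, Y) \;\cong\; \Map_{\Sset}(K, \Map_{\proP{\cal C}}(X, Y)) \;\cong\; \Map_{\proP{\cal C}}(X, Y^K)
\]
formally: mapping out of a colimit, or into a limit, turns it into a limit over $i$; this limit commutes with the outer $\lim_\beta$ in $\Map_{\proP{\cal C}}(X,Y) = \lim_\beta \colim_\alpha \Map_{\cal C}(x_\alpha, y_\beta)$; and because each $K_i$ is finite, $\Map_{\Sset}(K_i, -)$ commutes with the filtered colimit $\colim_\alpha$, while, being a right adjoint, it commutes with $\lim_\beta$ and reassembles $\lim_i \Map_{\Sset}(K_i,-)$ into $\Map_{\Sset}(\colim_i K_i, -)$. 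The coherence isomorphisms $(K \times L)\otimes X \cong K \otimes (L \otimes X)$ and the unit compatibilities are then inherited from ${\cal C}$ on finite complexes and extended along these definitions.

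For SM7 the plan is to use the standard reduction: it suffices to check the pullback-power axiom against the generating cofibrations $\partial\Delta^n \cofb \Delta^n$ and the generating acyclic cofibrations $\Lambda^n_k \cofb \Delta^n$ of simplicial sets (the clause asserting acyclicity of the pullback-power when $p$ is acyclic then follows formally). So I would fix one such finite cofibration $i\co A \cofb B$ and a strict fibration $p\co X \fibr Y$; since a strict fibration is a retract of a special fibration and strict fibrations are closed under retracts, I may assume $p$ is a special fibration, presented as a level map $\{x_\alpha \fibr y_\alpha\}_\alpha$ over a cofinite directed set $I$. Because $A$ and $B$ are finite, the cotensors $X^A$, $X^B$ and the finite fiber product $X^A \times_{Y^A} Y^B$ are computed levelwise, so the pullback-power map $X^B \to X^A \times_{Y^A} Y^B$ is the level map $\{\langle i, p_\alpha\rangle\}_\alpha$, where $\langle i, p_\alpha\rangle$ denotes the SM7 pullback-power in ${\cal C}$ of $i$ against $p_\alpha\co x_\alpha \to y_\alpha$.

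The crux is then to show this level map is a special fibration by computing its relative matching map at $\alpha$. The matching-object functor $\lim_{\beta<\alpha}$, being a limit, commutes with the cotensors $(-)^A$, $(-)^B$ and with the finite fiber product, so re-bracketing the resulting iterated pullback (a Fubini argument for limits) should identify the relative matching map of $\{\langle i, p_\alpha\rangle\}$ at $\alpha$ with $\langle i, q_\alpha\rangle$, the pullback-power in ${\cal C}$ of $i$ against the relative matching map $q_\alpha\co x_\alpha \to (\lim_{\beta<\alpha} x_\beta)\times_{\lim_{\beta<\alpha} y_\beta} y_\alpha$ of $p$. Since $p$ is a special fibration, $q_\alpha$ is a fibration in ${\cal C}$, so the SM7 axiom of ${\cal C}$ makes $\langle i, q_\alpha\rangle$ a fibration, acyclic whenever $i$ is. Hence $\{\langle i, p_\alpha\rangle\}$ is a special fibration, and a strict acyclic fibration when $i$ is acyclic (each relative matching map being then acyclic in ${\cal C}$), which verifies the two clauses and therefore SM7. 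I expect this identification of the relative matching map (commuting the matching-object functor past the finite cotensor and fiber product, and recognizing the corner map as an SM7 pullback-power in ${\cal C}$) to be the only non-formal step and the main obstacle; the rest is bookkeeping, relying on the compactness of finite simplicial sets and on the description of the strict model structure recalled in Remark~\ref{rmk:injectivemodel}.
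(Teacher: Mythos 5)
This theorem is not proved in the paper at all --- it is quoted from \cite[4.17]{isaksen-strict} --- so the comparison is with the cited argument, and your strategy is essentially that one: set up the enrichment with finite tensors/cotensors formed levelwise and infinite ones via (co)limits in $\proP{\cal C}$, verify the adjunctions using compactness of finite simplicial sets against $\lim_\beta\colim_\alpha$, and check SM7 by reducing to the generating (acyclic) cofibrations of simplicial sets, replacing $p$ by a special fibration, and identifying the relative matching map of the levelwise pullback-power with the pullback-power against the relative matching map $q_\alpha$ of $p$. Those steps are all sound: finite limits of level diagrams in $\proP{\cal C}$ are computed levelwise (the dual of the Artin--Mazur fact invoked in the proof of Proposition~\ref{prop:functionobjects}), cotensors commute with the matching-object limits, and the re-bracketing you describe does identify the matching map with $\langle i,q_\alpha\rangle$.

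The genuine flaw is the parenthetical claim that ``the clause asserting acyclicity of the pullback-power when $p$ is acyclic then follows formally.'' It does not. By adjunction of lifting properties, your two checks (fibration for generating cofibrations $i$; acyclic fibration for generating acyclic cofibrations $i$) yield exactly the two pushout-product clauses ``(strict cofibration) $\square$ (acyclic cofibration of $\Sset$) is an acyclic cofibration'' and ``(acyclic strict cofibration) $\square$ (cofibration of $\Sset$) is an acyclic cofibration,'' but the remaining clause --- $\langle i,p\rangle$ acyclic when $p$ is a strict acyclic fibration, equivalently ``(strict cofibration) $\square$ (cofibration of $\Sset$) is a strict cofibration'' --- is an independent statement and must be verified separately (cf.\ Hovey's criterion for Quillen bifunctors, which requires all three families of checks). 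Fortunately it is the easiest one: for $i\co A\cofb B$ a generating cofibration and $j$ an essentially levelwise cofibration of pro-objects, the pushout-product is computed levelwise (finite colimits of level diagrams again), so the pushout-product axiom in ${\cal C}$ gives an essentially levelwise cofibration; alternatively, rerun your matching-map computation for a strict acyclic fibration $p$, using the fact quoted in the proof of Proposition~\ref{prop:functionobjects} from \cite[4.14]{isaksen-strict} that such a $p$ can be presented with its relative matching maps $q_\alpha$ acyclic fibrations, whence $\langle i,q_\alpha\rangle$ is acyclic by SM7 in ${\cal C}$. With that one clause supplied, your argument is complete.
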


\begin{thm}[{\cite[6.4]{fausk-isaksen-modelstruct}}]
\label{thm:quillenadjunction}
  Suppose $L \colon {\cal C} \rightleftarrows {\cal D} \colon R$
  is a Quillen adjunction between proper model categories.  Then the
  induced adjunction of pro-categories
$\{L\} \colon \proP{\cal C} \rightleftarrows \proP{\cal D} \colon \{R\}$
  is a Quillen adjunction.  If the original adjunction is a Quillen
  equivalence, then so is the adjunction on pro-categories.
\end{thm}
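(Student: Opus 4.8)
The plan is to verify directly the defining properties of a Quillen adjunction and then of a Quillen equivalence, exploiting that $\{L\}$ and $\{R\}$ are simply the levelwise prolongations of $L$ and $R$: on a level representation, $\{L\}\{x_\alpha\}_\alpha = \{Lx_\alpha\}_\alpha$ and $\{R\}\{y_\beta\}_\beta = \{Ry_\beta\}_\beta$. That these are adjoint is immediate from the defining formula for maps in a pro-category (a limit of colimits of hom-sets) together with the adjunction isomorphism $\Hom_{\cal D}(Lx_\alpha, y_\beta) \cong \Hom_{\cal C}(x_\alpha, Ry_\beta)$; moreover the unit and counit of $\{L\} \dashv \{R\}$ are the level maps whose components are the unit and counit of $L \dashv R$.

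To see that this is a Quillen adjunction I would check that $\{R\}$ preserves strict fibrations and strict acyclic fibrations. Recall that a special fibration is a level map $\{x_\alpha \to y_\alpha\}_\alpha$ over a cofinite directed set whose relative matching maps $x_\alpha \to (\lim_{\beta < \alpha} x_\beta) \times_{\lim_{\beta < \alpha} y_\beta} y_\alpha$ are fibrations. Each object of a cofinite directed set has only finitely many predecessors, and $R$, being a right adjoint, commutes with all limits, so applying $R$ levelwise carries these matching maps to the relative matching maps of $\{Rx_\alpha \to Ry_\alpha\}_\alpha$; since $R$ is right Quillen these are again fibrations. Hence $\{R\}$ sends special fibrations to special fibrations, and, since it preserves retracts, it sends strict fibrations to strict fibrations. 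The acyclic case is identical, using the characterization of the strict acyclic fibrations as retracts of level maps whose relative matching maps are acyclic fibrations; equivalently, one can check instead that $\{L\}$ preserves strict cofibrations and strict acyclic cofibrations, which are the essentially levelwise cofibrations and acyclic cofibrations and are preserved since $L$ is left Quillen. Properness of ${\cal C}$ and ${\cal D}$ enters only to guarantee that the strict model structures exist.

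For the Quillen equivalence I would show that both the derived unit and the derived counit of $\{L\} \dashv \{R\}$ are strict weak equivalences; the induced adjunction of homotopy categories then has invertible unit and counit, hence is an equivalence. For the derived unit, use Remarks~\ref{rmk:replace-index} and \ref{rmk:injectivemodel} to model $X$ by a levelwise cofibrant diagram $\{x_\alpha\}_\alpha$ over a cofinite directed set; then $\{Lx_\alpha\}_\alpha$ is again levelwise cofibrant, and by Remark~\ref{rmk:injectivemodel} there is a levelwise acyclic cofibration $\{Lx_\alpha\}_\alpha \to \{w_\alpha\}_\alpha$ to a levelwise fibrant, injective-fibrant diagram over the same index. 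The derived unit is then the level map with $\alpha$-component the transpose $x_\alpha \to Rw_\alpha$ of the fibrant replacement $Lx_\alpha \to w_\alpha$; since $(L,R)$ is a Quillen equivalence and $x_\alpha$ is cofibrant, this component is a weak equivalence for every $\alpha$, so the derived unit is essentially levelwise a weak equivalence. Dually, model $Y$ by a levelwise fibrant, injective-fibrant diagram $\{y_\beta\}_\beta$ over a cofinite directed set, choose by Remark~\ref{rmk:injectivemodel} an injective fibration $\{v_\beta\}_\beta \to \{Ry_\beta\}_\beta$ that is a levelwise weak equivalence with $\{v_\beta\}_\beta$ levelwise cofibrant (a strict cofibrant replacement of $\{R\}Y$), and note that the derived counit $\{Lv_\beta\}_\beta \to Y$ has $\beta$-component the transpose $Lv_\beta \to y_\beta$ of the weak equivalence $v_\beta \to Ry_\beta$, which is a weak equivalence because $v_\beta$ is cofibrant, $y_\beta$ is fibrant, and $(L,R)$ is a Quillen equivalence.

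The main obstacle is essentially bookkeeping with reindexing: one must be certain that the fibrant and cofibrant replacements used to compute the derived unit and counit in the pro-categories can be taken to be level maps over cofinite directed index categories, so that those derived natural transformations really are the level maps with the componentwise descriptions above. This is exactly what Remarks~\ref{rmk:replace-index} and \ref{rmk:injectivemodel} provide, and once it is granted the whole argument collapses onto the corresponding levelwise statements for $(L,R)$.
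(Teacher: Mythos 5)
Your argument is correct, but there is nothing in the paper to compare it against: Theorem~\ref{thm:quillenadjunction} is imported verbatim from \cite[6.4]{fausk-isaksen-modelstruct} and no proof is given here. Your direct verification is essentially the standard argument underlying that citation: the levelwise prolongations are adjoint by the $\lim\colim$ formula for pro-morphisms; $\{L\}$ is left Quillen because strict cofibrations and strict acyclic cofibrations are the essentially levelwise ones and $L$ preserves these levelwise (equivalently, $\{R\}$ preserves special (acyclic) fibrations since $R$ preserves the finite/cofinite limits appearing in the relative matching objects); and the derived unit and counit are computed levelwise after reindexing by a cofinite directed set and using the injective fibrant/cofibrant replacements of Remarks~\ref{rmk:replace-index} and~\ref{rmk:injectivemodel}, which reduces everything to the unit/counit criterion for the original Quillen equivalence $(L,R)$. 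The only places where you assert rather than justify are the identifications of the strict acyclic cofibrations with the essentially levelwise acyclic cofibrations and of the strict acyclic fibrations with retracts of level maps whose relative matching maps are acyclic fibrations; these are not formal and should be cited as \cite[4.13]{isaksen-strict} and \cite[4.14]{isaksen-strict} (both are already used elsewhere in this paper), and note that only one of your two routes to the Quillen adjunction is needed. You are also right that it suffices to check the derived unit and counit on the particular levelwise cofibrant, respectively injective fibrant, representatives, since being an isomorphism in the homotopy category is invariant under choice of (co)fibrant representative; with the two citations supplied, your proof is complete, and properness indeed enters only through the existence of the strict model structures.
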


\begin{cor}
\label{cor:simplicialquillenadjunction}
  Suppose $L \colon {\cal C} \rightleftarrows {\cal D} \colon R$
  is a simplicial Quillen adjunction between proper simplicial model
  categories.  Then the induced Quillen adjunction
$\{L\} \colon \proP{\cal C} \rightleftarrows \proP{\cal D} \colon
\{R\}$
lifts to a simplicial Quillen adjunction between the pro-categories.
\end{cor}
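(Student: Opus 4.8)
The plan is to notice that the ordinary adjunction $\{L\} \dashv \{R\}$ is \emph{already} a Quillen adjunction between simplicial model categories---by Theorem~\ref{thm:quillenadjunction} together with \cite[4.17]{isaksen-strict}---so that the only thing left to do is equip this adjunction with a compatible simplicial enrichment and observe that the enrichment is compatible with the model structures.

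First I would check that $\{L\}$ is a simplicial functor, i.e. that it preserves tensors with simplicial sets. For a finite simplicial set $K$ the tensor $K \otimes X$ in $\proP{\cal C}$ is computed levelwise, so $\{L\}(K \otimes X) = \{L(K \otimes x_\alpha)\}_\alpha \cong \{K \otimes L x_\alpha\}_\alpha = K \otimes \{L\}X$, using that $L$ is a simplicial functor. For an arbitrary simplicial set $K$, write $K = \colim_i K_i$ over the filtered poset of its finite simplicial subsets; by \cite[\S 4.1]{isaksen-strict} the tensor $K \otimes X$ is the colimit $\colim_i(K_i \otimes X)$ formed in $\proP{\cal C}$, and since $\{L\}$ is a left adjoint it commutes with this colimit, so $\{L\}(K \otimes X) \cong \colim_i \{L\}(K_i \otimes X) \cong \colim_i(K_i \otimes \{L\}X) \cong K \otimes \{L\}X$. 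Dually, $\{R\}$ preserves cotensors with simplicial sets: levelwise for finite $K$, and for arbitrary $K$ because $X^K = \lim_i X^{K_i}$ in $\proP{\cal D}$ while $\{R\}$, being a right adjoint, preserves limits.

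Next I would exhibit the enriched adjunction isomorphism. Directly from the definition of mapping objects in a pro-category and the levelwise simplicial adjunction isomorphism $\Map_{\cal D}(Lx_\alpha, y_\beta) \cong \Map_{\cal C}(x_\alpha, Ry_\beta)$ one gets
\[
\Map_{\proP{\cal D}}(\{L\}X, Y) = \lim_\beta \colim_\alpha \Map_{\cal D}(L x_\alpha, y_\beta) \cong \lim_\beta \colim_\alpha \Map_{\cal C}(x_\alpha, R y_\beta) = \Map_{\proP{\cal C}}(X, \{R\}Y),
\]
naturally in $X$ and $Y$ and compatibly with composition and identities, since all of these are inherited from the corresponding levelwise structures in ${\cal C}$ and ${\cal D}$; on $0$-simplices it restricts to the hom-set adjunction underlying Theorem~\ref{thm:quillenadjunction}. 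Combined with the first two steps and the fact (\cite[4.17]{isaksen-strict}) that both strict model structures are simplicial model structures, this shows that $\{L\} \dashv \{R\}$ is a simplicial Quillen adjunction.

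The only place any genuine care is needed is the first step: tensors and cotensors of pro-objects with infinite simplicial sets are emphatically \emph{not} levelwise, so one cannot argue purely componentwise. The remedy is exactly the cocontinuity of $\{L\}$ (respectively continuity of $\{R\}$) applied to the presentation of $K$ as a filtered colimit of its finite subcomplexes; everything else is a formal bookkeeping consequence of the levelwise statements in ${\cal C}$ and ${\cal D}$ and of Theorem~\ref{thm:quillenadjunction}.
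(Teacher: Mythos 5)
Your proposal is correct, and its essential content---applying $\lim_\beta \colim_\alpha$ to the levelwise natural isomorphism $\Map_{\cal D}(Lx_\alpha, y_\beta) \cong \Map_{\cal C}(x_\alpha, Ry_\beta)$ to obtain the enriched adjunction---is exactly the paper's (one-line) proof. The preliminary verification that $\{L\}$ preserves tensors and $\{R\}$ preserves cotensors with arbitrary simplicial sets is correct but not strictly needed, since once the mapping-space adjunction isomorphism is in place (compatibly with composition and with the underlying Quillen adjunction of Theorem~\ref{thm:quillenadjunction}), the simplicial Quillen adjunction follows.
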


\begin{proof}
  Applying $\lim_\beta \colim_\alpha$ to the natural isomorphism
\[
\Map_{\cal C}(x_\alpha, Ry_\beta) \cong \Map_{\cal D}(Lx_\alpha,
y_\beta)
\]
  extends the adjunction to a simplicial adjunction.
\end{proof}

\begin{prop}
\label{prop:maphomotopytype}
  Suppose ${\cal C}$ is a proper simplicial model category.  For
  levelwise cofibrant $X$ and levelwise fibrant $Y$ in $\proP{\cal C}$
  with strict fibrant replacement $Y_f$, the homotopically correct mapping 
  simplicial set $\Map_{\proP{\cal C}}(X,Y_f)$ is a natural
  representative for the homotopy type
\[
\holim_\beta \hocolim_\alpha \mb R\Map_{\cal C}(x_\alpha, y_\beta).
\]
\end{prop}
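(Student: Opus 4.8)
The plan is to choose convenient representatives and then to show that the strict formula $\Map_{\proP{\cal C}}(X,Y_f)=\lim_\beta\colim_\alpha\Map_{\cal C}(x_\alpha,y_\beta)$ already computes the desired homotopy limit of homotopy colimits, the real content being a fibrancy verification. First I would use Remark~\ref{rmk:replace-index} to reindex $Y$ over a cofinite directed set $B$; both $\Map_{\proP{\cal C}}(X,-)$ on strict fibrant objects --- being a functor of pro-objects --- and the expression $\holim_\beta\hocolim_\alpha\mb R\Map_{\cal C}(x_\alpha,y_\beta)$ are invariant under the cofinal reindexing, so no generality is lost. By Remark~\ref{rmk:injectivemodel} I may then take the strict fibrant replacement $Y_f$ to be an injective (equivalently Reedy) fibrant $B$-diagram; in particular $Y_f$ is levelwise fibrant, and for each $\beta$ the matching map $(y_f)_\beta\to\lim_{\gamma<\beta}(y_f)_\gamma$ is a fibration in ${\cal C}$ whose indexing category $\{\gamma<\beta\}$ is \emph{finite}, by cofiniteness of $B$.

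Next I would record the relevant properties of the functor $G\co{\cal C}\to\Sset$ defined by $G(c)=\colim_\alpha\Map_{\cal C}(x_\alpha,c)$, a colimit over the filtered category opposite to the indexing category of $X$. Since each $x_\alpha$ is cofibrant, the SM7 axiom applied to the cofibration $\emptyset\cofb x_\alpha$ shows that $\Map_{\cal C}(x_\alpha,-)$ sends fibrations and acyclic fibrations to fibrations and acyclic fibrations of simplicial sets, weak equivalences between fibrant objects to weak equivalences, and fibrant objects to Kan complexes; since the generating (acyclic) cofibrations of $\Sset$ have finite domains and codomains, filtered colimits of simplicial sets preserve each of these classes, so $G$ inherits all of these properties. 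Crucially, each $\Map_{\cal C}(x_\alpha,-)$ is a right adjoint, to $x_\alpha\otimes-$, and filtered colimits commute with finite limits in $\Sset$; hence $G$ preserves finite limits.

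Applying $G$ levelwise to $Y_f$, the fact that $G$ commutes with the finite matching limits $\lim_{\gamma<\beta}$ identifies the matching map of the $B$-diagram $\{G((y_f)_\beta)\}_\beta$ at $\beta$ with $G$ applied to the matching map of $Y_f$ at $\beta$; since $G$ preserves fibrations, this matching map is a Kan fibration, so $\{G((y_f)_\beta)\}_\beta$ is a Reedy --- hence, by Remark~\ref{rmk:injectivemodel}, injective --- fibrant $B$-diagram of Kan complexes. As the constant-diagram functor $\Sset\to\Sset^B$ is left Quillen for the injective model structure, $\lim_\beta\co\Sset^B\to\Sset$ is right Quillen, and therefore $\Map_{\proP{\cal C}}(X,Y_f)=\lim_\beta G((y_f)_\beta)$ is weakly equivalent to $\holim_\beta G((y_f)_\beta)$. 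Finally, for each $\beta$ the canonical map $\hocolim_\alpha\Map_{\cal C}(x_\alpha,(y_f)_\beta)\to G((y_f)_\beta)$ is a weak equivalence because the colimit is filtered, and $\Map_{\cal C}(x_\alpha,(y_f)_\beta)$ represents $\mb R\Map_{\cal C}(x_\alpha,y_\beta)$ because $x_\alpha$ is cofibrant and $(y_f)_\beta$ is a fibrant object weakly equivalent to the fibrant object $y_\beta$; since $\holim_\beta$ and $\hocolim_\alpha$ preserve levelwise weak equivalences, one obtains a natural chain of equivalences
\[
\holim_\beta G((y_f)_\beta)\ \overfrom^\sim\ \holim_\beta\hocolim_\alpha\Map_{\cal C}(x_\alpha,(y_f)_\beta)\ \overfrom^\sim\ \holim_\beta\hocolim_\alpha\mb R\Map_{\cal C}(x_\alpha,y_\beta),
\]
which together with the previous step yields the assertion.

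The main obstacle is the fibrancy verification in the third paragraph, namely that $\{G((y_f)_\beta)\}_\beta$ is an injective fibrant diagram of simplicial sets. This rests on $G$ commuting with the matching limits, which holds only because those limits are finite --- a consequence of having reindexed $Y$ over a cofinite set. Without cofiniteness the matching objects are infinite limits, $G$ (being a filtered colimit of right adjoints) need not commute with them, and one loses homotopical control of $\lim_\beta\colim_\alpha$; the rest of the argument is a routine assembly of standard facts about filtered colimits of simplicial sets and about injective and Reedy model structures on diagram categories. Naturality in $X$ and $Y$ holds because every map in the chain above, as well as the reindexing and replacement steps, is natural.
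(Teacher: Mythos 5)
Your proof is correct, but it takes a genuinely different route from the paper's. The paper disposes of the key homotopical-control step in one line by citing \cite[5.3]{fausk-isaksen-tmodel}: for strict cofibrant $X$ and strict fibrant $Y_f$, the strict mapping space $\Map_{\proP{\cal C}}(X,Y_f)$ is weakly equivalent to $\holim_\beta \colim_\alpha \Map_{\cal C}(x_\alpha,y_\beta)$; it then concludes exactly as you do, noting that levelwise cofibrancy/fibrancy makes $\Map_{\cal C}(x_\alpha,y_\beta)$ a derived mapping space and that filtered colimits of simplicial sets are homotopy colimits. You instead reprove the cited step from scratch: reindex over a cofinite directed set, take an injective (Reedy) fibrant replacement, and show that $G(c)=\colim_\alpha\Map_{\cal C}(x_\alpha,c)$ preserves fibrations and finite limits --- using compactness of the generating (acyclic) cofibrations of $\Sset$ and the finiteness of the matching categories --- so that $\{G((y_f)_\beta)\}_\beta$ is Reedy fibrant and the literal $\lim_\beta$ computes $\holim_\beta$. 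This buys a self-contained argument that makes visible exactly where finiteness and the simplicial enrichment are used (the same hypotheses that reappear in Proposition~\ref{prop:functionobjects}), at the cost of redoing work the paper outsources. One small point you leave implicit: the statement allows an arbitrary strict fibrant replacement $Y_f$, whereas you construct a particular injective fibrant one after reindexing; since $X$ is strict cofibrant and $\proP{\cal C}$ is a simplicial model category, SM7 plus Ken Brown's lemma shows $\Map_{\proP{\cal C}}(X,-)$ sends strict equivalences between strict fibrant objects to weak equivalences, so your chosen model computes the same homotopy type as any other replacement; a sentence to this effect would close the loop.
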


\begin{proof}
  Since $X$ is strict cofibrant, {\cite[5.3]{fausk-isaksen-tmodel}}
  shows that $\Map_{\proP{\cal C}}(X, Y_f)$ is weakly equivalent to
\[
\holim_\beta \colim_\alpha \Map_{\cal C}(x_\alpha, y_\beta).
\]
Because $X$ is levelwise cofibrant and $Y$ is levelwise fibrant, the
mapping spaces $\Map_{\cal C}(x_\alpha, y_\beta)$ are representatives
for the derived mapping spaces.  Finally, in simplicial sets, filtered
colimits are always representatives for homotopy colimits because
filtered colimits preserve weak equivalences.
\end{proof}

\subsection{Tensor structures}

\begin{defn}[{\cite[\S 11]{fausk-isaksen-tmodel}}]
  Suppose ${\cal C}$ has a monoidal operation $\owedge$ with unit $\mb
  I$.  The {\em levelwise monoidal structure} on $\proP{\cal C}$ is
  defined so that for $X,Y \in \proP{\cal C}$ indexed by $I$ and $J$
  respectively, the tensor $X \owedge Y$ is the pro-object $\{x_\alpha
  \owedge y_\beta\}_{\alpha \times \beta}$ indexed by $I \times J$.
  The unit is the constant pro-object $\mb I$.
\end{defn}

\begin{rmk}
  Note that this tensor structure on $\proP{\cal C}$ is almost never
  closed, even when the tensor structure on $\mathcal{C}$ is, as the
  levelwise tensor usually does not commute with colimits (including
  infinite coproducts) in either variable.  However, the constant
  pro-object $\{\emptyset\}$ is initial in the pro-category, and is
  preserved by the levelwise tensor product if it is preserved by
  $\owedge$.
\end{rmk}

\begin{prop}[{\cite[12.7, 12.3]{fausk-isaksen-tmodel}}]
  If ${\cal C}$ is a proper tensor model category, then the strict model
  structure on $\proP{\cal C}$ is also a tensor model category under
  the levelwise tensor structure.

  If, in addition, ${\cal C}$ is an operadic model category, the
  levelwise tensor structure on $\proP{\cal C}$ makes $\proP{\cal C}$
  into an operadic model category.
\end{prop}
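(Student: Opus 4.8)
The plan is to reduce every assertion to the levelwise situation in ${\cal C}$. The first statement — that the strict model structure on $\proP{\cal C}$ together with the levelwise tensor satisfies the pushout product axiom and preserves weak equivalences when one input is cofibrant — is exactly \cite[12.7, 12.3]{fausk-isaksen-tmodel}. The only extra requirement imposed by our more restrictive notion of tensor model category is that the product with an initial object, in either variable, be initial. Here one uses that the constant pro-object $\{\emptyset\}$ is initial in $\proP{\cal C}$ and that $\owedge$ on ${\cal C}$ takes an initial object in either variable to an initial object: then, since the levelwise tensor of $X$ with $\{\emptyset\}$ is the pro-object constant on an initial object of ${\cal C}$, it is initial in $\proP{\cal C}$.

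For the second statement, recall that an operadic model category is a simplicial symmetric tensor model category. By the first statement $\proP{\cal C}$ is a tensor model category, by \cite[4.17]{isaksen-strict} it is a simplicial model category, and the levelwise tensor is symmetric because the tensor on ${\cal C}$ is; so it remains to produce the coherence isomorphisms $K \otimes X \cong X \owedge (K \otimes \mb I)$ and $K \otimes X \cong (K \otimes \mb I) \owedge X$ for finite simplicial sets $K$, to check their compatibility with the unit isomorphism and with the symmetry, and to verify that $\Map_{\proP{\cal C}}(\mb I,-)$ is a right Quillen functor.

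The coherence isomorphisms are handled levelwise: for finite $K$, both $K \otimes X$ and $K \otimes \mb I$ are computed levelwise, so for $X = \{x_\alpha\}_\alpha$ the objects $K \otimes X$ and $X \owedge (K \otimes \mb I)$ are $\{K \otimes x_\alpha\}_\alpha$ and $\{x_\alpha \owedge (K \otimes \mb I)\}_\alpha$ respectively. Applying, in each degree $\alpha$, the isomorphism supplied by the simplicial symmetric tensor model category ${\cal C}$, and invoking its naturality in $x_\alpha$ to obtain a morphism of pro-objects, gives the two isomorphisms; compatibility with the unit isomorphism, and the identification of the composite $X \owedge (K \otimes \mb I) \cong K \otimes X \cong (K \otimes \mb I) \owedge X$ with the symmetry isomorphism of the levelwise tensor, are likewise verified degreewise, where they hold by hypothesis on ${\cal C}$.

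Finally, $\Map_{\proP{\cal C}}(\mb I,-)$ is right Quillen if and only if the unit $\mb I$ is cofibrant in $\proP{\cal C}$, by the formal chain of equivalences in Remark~\ref{rmk:cofibrant-unit}, which is valid in any simplicial model category. Since ${\cal C}$ is operadic its unit is cofibrant, so the map $\{\emptyset\} \to \mb I$ of constant pro-objects is a levelwise cofibration and hence a strict cofibration, exhibiting $\mb I$ as strict cofibrant. I do not anticipate a genuine obstacle; the one point requiring care is that the compatibility between the tensor with spaces and the monoidal product in $\proP{\cal C}$ is only well behaved on finite simplicial sets — but that is exactly what the axioms of a simplicial symmetric tensor model category demand, so the degreewise arguments above suffice, the behavior of $(-)\otimes\mb I$ on infinite complexes being controlled automatically through the same chain of equivalences once cofibrancy of the unit is known.
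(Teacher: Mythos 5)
Your argument is correct and matches the paper's treatment: the paper offers no proof beyond the citation of Fausk--Isaksen, and your levelwise verification of the extra conditions (the initial-object requirement via the constant pro-object $\{\emptyset\}$, the finite-$K$ compatibility isomorphisms, symmetry, and cofibrancy of the unit giving that $\Map_{\proP{\cal C}}(\mb I,-)$ is right Quillen via Remark~\ref{rmk:cofibrant-unit}) is exactly the intended fleshing-out of that citation.
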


\begin{prop}
\label{prop:tensorquillenadjunction}
  Suppose $L \colon {\cal C} \rightleftarrows {\cal D} \colon R$ is a
  tensor Quillen adjunction between proper tensor model categories.
  Then the induced adjunction $\{L\}\colon \proP{\cal C}
  \rightleftarrows \proP{\cal D} \colon \{R\}$ is a tensor Quillen
  adjunction, which is symmetric if the original Quillen adjunction
  is.
\end{prop}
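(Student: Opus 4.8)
The plan is to reduce each of the three extra axioms in Definition~\ref{def:tensorquillen} to a levelwise statement about the original adjunction $L \dashv R$, using that strict cofibrations and strict weak equivalences in a pro-category are detected essentially levelwise. To begin, Theorem~\ref{thm:quillenadjunction} already gives that $\{L\}\colon \proP{\cal C} \rightleftarrows \proP{\cal D}\colon \{R\}$ is a Quillen adjunction between the strict model structures, and \cite[12.7, 12.3]{fausk-isaksen-tmodel} gives that both pro-categories are tensor model categories under the levelwise tensor structure; so only the lax monoidal structure on $\{R\}$ and the two weak-equivalence conditions need attention. The lax monoidal structure is obtained levelwise: for $X = \{x_\alpha\}_\alpha$ and $Y = \{y_\beta\}_\beta$, the structure maps $Rx_\alpha \owedge Ry_\beta \to R(x_\alpha \owedge y_\beta)$ of $R$ assemble into a level map $\{R\}(X) \owedge \{R\}(Y) \to \{R\}(X \owedge Y)$, both sides indexed by the product of the index categories, and the associativity and unit coherence diagrams commute because they commute levelwise. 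If $R$ is lax symmetric monoidal then the symmetry axiom holds levelwise as well, since the symmetry isomorphisms of $\proP{\cal C}$ and $\proP{\cal D}$ are the levelwise ones.

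Next I would verify the comparison conditions. Given strict cofibrant $X, Y \in \proP{\cal C}$, the map $\emptyset \to X$ (and likewise for $Y$) is essentially levelwise a cofibration, so we may replace $X$ and $Y$ by isomorphic pro-objects that are levelwise cofibrant; since $\owedge$ is a genuine bifunctor on $\proP{\cal C}$ this does not affect the statement. Then $X \owedge Y = \{x_\alpha \owedge y_\beta\}_{\alpha \times \beta}$ is levelwise cofibrant (Remark~\ref{rmk:empty-tensor}), and the natural oplax comparison $\{L\}(X \owedge Y) \to \{L\}(X) \owedge' \{L\}(Y)$ is the level map whose $(\alpha,\beta)$-component is $L(x_\alpha \owedge y_\beta) \to Lx_\alpha \owedge' Ly_\beta$; each such component is a weak equivalence in ${\cal D}$ because $x_\alpha$ and $y_\beta$ are cofibrant in ${\cal C}$ and $L \dashv R$ is a tensor Quillen adjunction, so the comparison is a levelwise, hence strict, weak equivalence. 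For the unit, choose a cofibrant replacement $(\mb I_{\cal C})_c \to \mb I_{\cal C}$ in ${\cal C}$ and let $\mb I_c$ be the associated constant pro-object; it is levelwise cofibrant and the constant map $\mb I_c \to \mb I_{\proP{\cal C}}$ is a levelwise weak equivalence, so $\mb I_c$ is a cofibrant replacement of the unit of $\proP{\cal C}$. Then $\{L\}(\mb I_c)$ is the constant pro-object on $L((\mb I_{\cal C})_c)$, and its map to $\mb I_{\proP{\cal D}}$ is the constant pro-object on the weak equivalence $L((\mb I_{\cal C})_c) \to \mb I_{\cal D}$, hence a strict weak equivalence. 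This checks all the axioms, and the symmetric case follows since every structure map used above is the levelwise one.

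The argument is essentially bookkeeping, and I do not expect a genuine obstacle; the only point that needs care is the passage to levelwise cofibrant (and levelwise cofibrant-replaced) representatives, which relies on properness of ${\cal C}$ together with the description of the strict model structure, and on the levelwise tensor product being compatible with pro-isomorphisms, so that replacing $X$, $Y$, and the unit by isomorphic well-behaved diagrams leaves the homotopy-category-level statements unchanged. A secondary thing to keep straight is the bookkeeping of index categories, since $X \owedge Y$, $\{L\}(X \owedge Y)$, and $\{L\}(X) \owedge' \{L\}(Y)$ are all naturally indexed by the product of the index categories of $X$ and $Y$, so the comparison really is a level map and no reindexing is required.
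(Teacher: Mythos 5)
Your proof is correct and takes essentially the same route as the paper's: invoke Theorem~\ref{thm:quillenadjunction} for the Quillen adjunction, assemble the lax monoidal structure on $\{R\}$ (and the induced comparison maps for $\{L\}$) levelwise, and verify the conditions of Definition~\ref{def:tensorquillen} on levelwise cofibrant representatives, with the unit handled by a constant pro-object on a cofibrant replacement. The paper's argument is simply a terser version of this same bookkeeping.
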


\begin{proof}
  By Theorem~\ref{thm:quillenadjunction}, the pair $\{L\}$ and $\{R\}$
  form a Quillen adjunction. For pro-objects $X$ and $Y$, the maps
  $Rx_\alpha \owedge Ry_\beta \to R(x_\alpha \owedge y_\beta)$
  assemble levelwise to a natural lax monoidal structure for the
  functor $\{R\}$ on pro-objects, and the induced natural
  transformations for $\{L\}$ are also computed levelwise.  If $X$ and
  $Y$ are cofibrant objects of $\proP{\cal C}$, we may choose
  levelwise cofibrant models which make the conditions of
  Definition~\ref{def:tensorquillen} immediate.
\end{proof}

Combining this with Corollary~\ref{cor:simplicialquillenadjunction},
we obtain the following.

\begin{cor}
\label{cor:operadicquillenadjunction}
  Suppose $L \colon {\cal C} \rightleftarrows {\cal D} \colon R$
  is an operadic Quillen adjunction between proper operadic model
  categories.  Then the induced Quillen adjunction $\{L\}\colon\proP{\cal
  C} \rightleftarrows \proP{\cal D} \colon \{R\}$ is an operadic
  Quillen adjunction.
\end{cor}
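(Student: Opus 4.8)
The plan is to unwind the definition of ``operadic Quillen adjunction'' and assemble the pro-categorical version out of the pieces already in hand. By definition an operadic Quillen adjunction between ${\cal C}$ and ${\cal D}$ is a Quillen adjunction $L \dashv R$ which is at once a simplicial Quillen adjunction and a symmetric tensor Quillen adjunction, together with a lift of the lax symmetric monoidal structure on $R$ compatible with the simplicial enrichments (an appropriate ``lax monoidal simplicial'' structure, in the sense recorded after Definition~\ref{def:tensorquillen}). The pro-categories $\proP{\cal C}$ and $\proP{\cal D}$ are themselves operadic model categories by the preceding proposition, so the target structure is in place; what remains is to check that $\{L\} \dashv \{R\}$ respects it.

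First I would invoke Corollary~\ref{cor:simplicialquillenadjunction}, which gives that $\{L\}\colon \proP{\cal C} \rightleftarrows \proP{\cal D} \colon \{R\}$ is a simplicial Quillen adjunction, with simplicial adjunction isomorphism obtained by applying $\lim_\beta \colim_\alpha$ to the one in ${\cal C}$ and ${\cal D}$. Next I would invoke Proposition~\ref{prop:tensorquillenadjunction}, which gives that the same pair is a symmetric tensor Quillen adjunction; importantly, the proof of that proposition produces the lax monoidal structure on $\{R\}$ as the levelwise assembly of the maps $Rx_\alpha \owedge Ry_\beta \to R(x_\alpha \owedge y_\beta)$, and the oplax comparison maps for $\{L\}$ levelwise as well.

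The only remaining point, and the only one requiring care, is that this levelwise lax monoidal structure on $\{R\}$ is compatible with the simplicial enrichment, i.e.\ lifts to a lax monoidal simplicial structure. Here the key observation is that every relevant piece of structure on $\proP{\cal C}$ and $\proP{\cal D}$ --- the simplicial mapping objects $\lim_\beta \colim_\alpha \Map_{\cal C}(x_\alpha, y_\beta)$, the pairings $\Map_{\proP{\cal C}}(X,Y) \times \Map_{\proP{\cal C}}(X',Y') \to \Map_{\proP{\cal C}}(X \owedge X', Y \owedge Y')$ coming from the levelwise tensor on $I \times J$-indexed diagrams, and the lax monoidal structure maps for $\{R\}$ --- is obtained by applying $\lim \colim$ to the corresponding structure on ${\cal C}$ and ${\cal D}$. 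Consequently the coherence diagrams expressing ``$\{R\}$ is lax monoidal simplicial'' reduce, one at a time, to coherence diagrams already valid in ${\cal C}$ and ${\cal D}$. The one genuinely load-bearing fact making this descent legitimate is that filtered colimits commute with finite limits and finite products in simplicial sets, so that for instance $\Map_{\proP{\cal C}}(X,Y) \times \Map_{\proP{\cal C}}(X',Y')$ is naturally $\lim_{\beta,\beta'} \colim_{\alpha,\alpha'}\bigl(\Map_{\cal C}(x_\alpha,y_\beta) \times \Map_{\cal C}(x'_{\alpha'},y'_{\beta'})\bigr)$ and the levelwise pairings from ${\cal C}$ genuinely assemble to the pairing on $\proP{\cal C}$. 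I expect the bookkeeping around reindexing tensor products onto product index categories to be the main (mild) obstacle; once that is in place the whole statement is formal.
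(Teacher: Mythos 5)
Your proposal is correct and follows essentially the same route as the paper, which simply combines Proposition~\ref{prop:tensorquillenadjunction} with Corollary~\ref{cor:simplicialquillenadjunction} and treats the compatibility as formal. The extra verification you carry out --- that the levelwise lax monoidal structure on $\{R\}$ is compatible with the $\lim\colim$ simplicial enrichment, using commutation of filtered colimits with finite products of simplicial sets --- is exactly the bookkeeping the paper leaves implicit, so your write-up is just a more explicit version of the same argument.
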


\subsection{Function objects}
\label{sec:function-objects}

For the remainder of Section~\ref{sec:pro-objects}, we will suppose
that ${\cal C}$ is a proper operadic model category whose monoidal
product is symmetric monoidal closed.  Specifically, there is a
cofibrant unit object $\mb I$, and for objects $x, y \in {\cal C}$ we
have a product $x \owedge y$ and an internal function object $F_{\cal
  C}(x,y)$.

\begin{defn}[{\cite[9.14]{fausk-equivariant}}]
  There is a functor 
  \[
  F_{\proP{\cal C}}\co (\proP{\cal C})^{op} \times \proP{\cal C} \to \proP{\cal C}
  \] 
  defined by 
  \[
  F_{\proP{\cal C}}(X,Y) = \{\colim_\alpha F_{\cal C}(x_\alpha, y_\beta)\}_\beta,
  \]
equipped with a natural transformation
  \[
  \Map_{\proP{\cal C}}(X \owedge Y, Z) \to
  \Map_{\proP{\cal C}}(X, F_{\proP{\cal C}}(Y,Z))
  \]
  given by the composite
  \begin{align*}
    \lim_\gamma \colim_{\alpha \times \beta} & \, \Map_{\cal
      C}(x_\alpha \owedge y_\beta, z_\gamma) \xrightarrow{\cong}
    \lim_\gamma \colim_\alpha \colim_\beta
    \Map_{\cal C}(x_\alpha, F_{\cal C}(y_\beta, z_\gamma))\\
    & \to \lim_\gamma \colim_\alpha \Map_{\cal C}(x_\alpha,
    \colim_\beta F_{\cal C}(y_\beta, z_\gamma)).
  \end{align*}
\end{defn}

\begin{rmk}
In particular, the case $X = \{\mb I\}$ produces a natural isomorphism
\[
\Map_{\proP{\cal C}}(\mb I, F_{\proP{\cal C}}(Y,Z)) \cong \Map_{\proP{\cal
    C}}(Y,Z).
\]
\end{rmk}

\begin{rmk}
  The functor $F_{\proP{\cal C}}$ does not generally act as an
  internal function object, in large part due to the presence of the
  colimit in the definition.
\end{rmk}

\subsection{Homotopical properties of function objects}

We continue the assumptions of Section~\ref{sec:function-objects} on
${\cal C}$.

\begin{prop}
\label{prop:functionobjects}
Suppose that filtered colimits preserve fibrations, represent homotopy
colimits, and commute with finite limits in ${\cal C}$.  Then the
function object $F_{\proP{\cal C}}(X,Y)$ satisfies the following
properties.
\begin{enumerate}
\item For a fixed $Y \in \proP{\cal C}$ and a cofiltered index
  category $I$, $F_{\proP{\cal C}}(-,Y)$ takes finite colimits in
  ${\cal C}^I$ to finite limits in $\proP{\cal C}$.
\item For a fixed $X \in \proP{\cal C}$ and a cofiltered index
  category $I$, $F_{\proP{\cal C}}(X,-)$ takes finite limits in
  ${\cal C}^I$ to finite limits in $\proP{\cal C}$.
\item The function object satisfies an SM7 axiom:  for any strict
  cofibration $i\co A \cofb B$ and strict fibration $p\co X
  \fibr Y$ in $\proP{\cal C}$, the induced map
\[
F_{\proP{\cal C}}(B,X) \to F_{\proP{\cal C}}(B,Y)
\mathop\times_{F_{\proP{\cal C}}(A,Y)} F_{\proP{\cal C}}(A,X)
\]
is a fibration, which is a strict equivalence if either $i$ or $p$ is.
\end{enumerate}
\end{prop}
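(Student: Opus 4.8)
```latex
\textbf{Proof plan.}
The plan is to verify each of the three statements by reducing to level maps indexed on a cofinite directed set (using Remark~\ref{rmk:replace-index}), then exploiting the explicit formula $F_{\proP{\cal C}}(X,Y) = \{\colim_\alpha F_{\cal C}(x_\alpha, y_\beta)\}_\beta$ together with the three hypotheses on filtered colimits. For the first statement, note that a finite colimit in ${\cal C}^I$ of pro-objects (for $I$ cofiltered) is again computed by taking the filtered colimit over $I$ of the finite colimit in ${\cal C}$; but $F_{\proP{\cal C}}(-,Y)$ is contravariant, so a finite colimit in the first variable should become a finite limit. Concretely, since $F_{\cal C}(-, y_\beta)$ is a right adjoint in the first variable it carries finite colimits in ${\cal C}$ to finite limits in ${\cal C}$, and since filtered colimits commute with finite limits in ${\cal C}$, the colimit $\colim_\alpha F_{\cal C}(-, y_\beta)$ still carries the finite colimit of $I$-diagrams to a finite limit; assembling over $\beta$ and using that finite limits in $\proP{\cal C}$ are computed compatibly gives the claim. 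The second statement is the analogue with $F_{\cal C}(x_\alpha,-)$ a right adjoint preserving all limits, again combined with the commutation of filtered colimits with finite limits, so it follows by the same bookkeeping.

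For the third (SM7) statement, I would first reduce $i$ and $p$ to level maps on a common cofinite directed index set, arranging $p$ to be a special fibration (its retract case then follows formally). The map in question is, for each index $\beta$, a filtered colimit over $\alpha$ of maps of the shape
\[
F_{\cal C}(b_\alpha, x_\beta) \to F_{\cal C}(b_\alpha, y_\beta) \mathop\times_{F_{\cal C}(a_\alpha, y_\beta)} F_{\cal C}(a_\alpha, x_\beta),
\]
and the SM7 axiom in ${\cal C}$ makes each such map a fibration (acyclic if $i$ or $p$ is acyclic on that level). Here I need: (a) filtered colimits preserve fibrations, to conclude the colimit is still a fibration; (b) filtered colimits commute with the finite limit defining the fiber product, so that the colimit of the targets is the fiber product of the colimits; and (c) for the acyclicity clause, that filtered colimits represent homotopy colimits, hence preserve weak equivalences. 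One then has to check that ``levelwise fibration of level maps on a cofinite directed set'' plus the matching-map condition really does produce a strict fibration in $\proP{\cal C}$ — this is exactly the content of Remark~\ref{rmk:injectivemodel} identifying strict fibrations with the injective/Reedy fibrations, so the matching maps for $F_{\proP{\cal C}}$ need to be shown to be fibrations; but those matching maps are again finite limits of the levelwise data, and the commutation of filtered colimits with finite limits lets us pull the $\colim_\alpha$ outside, reducing to the matching-map fibrancy already available in ${\cal C}$.

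The main obstacle I expect is bookkeeping the matching maps in part (3): one must be careful that the relative matching map for the pro-object $\{\colim_\alpha F_{\cal C}(b_\alpha, x_\beta)\}_\beta$ is computed as the filtered colimit over $\alpha$ of the corresponding relative matching map for the diagrams $\{F_{\cal C}(b_\alpha, x_\beta)\}_\beta$, which uses precisely that filtered colimits commute with the finite limits $\lim_{\gamma<\beta}$ appearing in the matching object. Once that identification is in place, the SM7 axiom in ${\cal C}$ (applied with $b_\alpha$ cofibrant and the relative matching map of $p$ a fibration) gives levelwise fibrations, filtered-colimit-preservation of fibrations upgrades this to the pro-level, and the homotopy-colimit property handles acyclicity; statements (1) and (2) are then essentially formal consequences of the adjointness of $F_{\cal C}$ in each variable together with the same commutation.
```
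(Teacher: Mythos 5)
Your proposal is correct and takes essentially the same route as the paper's proof: parts (1) and (2) follow from the adjunction properties of $F_{\cal C}$ in each variable, levelwise computation of finite (co)limits of level diagrams, and commutation of filtered colimits with finite limits, while part (3) is proved exactly as in the paper by representing $p$ as a special fibration, applying SM7 in ${\cal C}$ to the levelwise cofibrations $a_\alpha \to b_\alpha$ and the relative matching maps of $p$, commuting $\colim_\alpha$ past the finite matching limits, and invoking preservation of fibrations and weak equivalences by filtered colimits. The only cosmetic points are that your preliminary application of SM7 to the levelwise maps $x_\beta \to y_\beta$ is redundant (the matching-map argument is what produces the special fibration), and the phrase ``applied with $b_\alpha$ cofibrant'' should read ``applied to the cofibration $a_\alpha \to b_\alpha$.''
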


\begin{proof}
  \begin{enumerate} 
  \item For a finite diagram $J \to {\cal C}^I$, the colimit as a
    diagram of pro-objects is computed by the colimit in ${\cal C}^I$
    \cite[Appendix~4.2]{artin-mazur-etale}.  By assumption, the natural
    morphisms
    \[
    \colim_\alpha \lim_j F_{\cal C}(x^j_\alpha, y_\beta) \to \lim_j \colim_\alpha
    F_{\cal C}(x^j_\alpha, y_\beta)
    \]
    are isomorphisms for all $\beta$, so we find that the natural map
    $F_{\proP{\cal C}}(\colim_j X^j, Y) \to \lim_j F_{\proP{\cal
        C}}(X^j,Y)$ is an isomorphism.
  \item The proof of this item is identical to that of the previous one.
  \item We note that the statement is preserved by retracts in $p$,
    and so we may assume that $p\co X \fibr Y$ is a special fibration.
    We can choose level representations for $i$ and $p$ with several
    properties:
    \begin{itemize}
    \item the map $i$ is a levelwise cofibration $\{a_\alpha \cofb b_\alpha\}_\alpha$,
    \item the map $i$ is a levelwise acyclic cofibration if $i$ is a
      strict equivalence \cite[4.13]{isaksen-strict},
    \item the map $p$ is indexed by a cofinite directed set,
    \item the maps from $x_\beta$ to $M_\beta = y_\beta
      \mathop\times_{\lim_{\gamma < \beta} y_\gamma} (\lim_{\gamma < \beta}
      x_\gamma)$ defined by $p$ are fibrations, and
    \item the fibrations $x_\beta \fibr M_\beta$ are weak equivalences if $p$
      is a strict equivalence \cite[4.14]{isaksen-strict}.
    \end{itemize}
    The pushout product axiom in ${\cal C}$ is equivalent to the
    internal SM7 axiom.  Hence for all $\alpha$ and $\beta$, we find
    that the map
    \[
      F_{\cal C}(b_\alpha,x_\beta) \to F_{\cal C}(b_\alpha,M_\beta)
      \mathop\times_{F_{\cal C}(a_\alpha,M_\beta)} F_{\cal C}(a_\alpha,x_\beta)
    \]
    is a fibration, and is a weak equivalence if $i$ or $p$ is a
    strict equivalence.  Using the fact that $F_{\cal C}$ preserves limits in the 
    target variable, this says that the natural map
    \[
      F_{\cal C}(b_\alpha, x_\beta) \to Z_{\alpha,\beta}
      \mathop\times_{\lim_{\gamma < \beta}
        Z_{\alpha, \gamma}} \bigl(\lim_{\gamma < \beta} F_{\cal C}(b_\alpha, x_\gamma)\bigr)
    \]
    is a fibration which is trivial if $i$ or $p$ is, where
    \[
    Z_{\alpha,\gamma} = F_{\cal C}(b_\alpha, y_\gamma)
    \mathop\times_{F_{\cal C}(a_\alpha, y_\gamma)} F_{\cal
      C}(a_\alpha, x_\gamma)
    \]
    is the component of the fiber product in degree $\gamma$.

    Taking colimits in $\alpha$, which commutes with the fiber product
    and preserves fibrations by assumption, we obtain
    a level representation of the map
    \[
    F_{\proP{\cal C}}(B,X) \to F_{\proP{\cal C}}(B,Y)
    \mathop\times_{F_{\mathrm{pro}\textrm{-}{\cal C}}(A,Y)} F_{\proP{\cal C}}(A,X)
    \]
    by a special fibration.  Since filtered colimits represent
    homotopy colimits, they preserve weak equivalences, and so this is
    a levelwise equivalence if $i$ or $p$ is a strict equivalence, as
    desired.\qedhere
  \end{enumerate}
\end{proof}

\begin{rmk}
  This actually proves that the SM7 map of $F_{\proP{\cal C}}$ already
  provides a special fibration or special acyclic fibration if the
  original map $p$ is a special fibration or special acyclic
  fibration.
\end{rmk}

\begin{cor}
  Under the assumptions of Proposition~\ref{prop:functionobjects}, we
  have the following consequences.
\begin{enumerate}
\item For fibrant $Y \in \proP{\cal C}$, $F_{\proP{\cal C}}(-,Y)$ preserves weak
  equivalences between cofibrant objects.
\item For cofibrant $X \in \proP{\cal C}$, $F_{\proP{\cal C}}(X,-)$ preserves
  weak equivalences between fibrant objects.
\item The functor $F_{\proP{\cal C}}(-,-)$ descends to a well-defined
  weak function object $F^{weak}(-,-)$ for the homotopy category of
  cofibrant-fibrant objects of $\proP{\cal C}$.
\end{enumerate}
\end{cor}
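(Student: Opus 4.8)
The plan is to extract parts (1) and (2) directly from the SM7 axiom of Proposition~\ref{prop:functionobjects}(3) by the usual Ken Brown argument, and then to assemble (3) formally from these.

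For (1), I would apply Ken Brown's lemma to the contravariant functor $F_{\proP{\cal C}}(-,Y)$, reducing to the claim that it sends an acyclic strict cofibration $i\co A \cofb B$ between cofibrant pro-objects to a weak equivalence. Given such an $i$ and a fibrant $Y$, the map $Y \fibr \ast$ to the terminal pro-object is a strict fibration, so Proposition~\ref{prop:functionobjects}(3) applies to the pair $(i,\,Y\fibr\ast)$. Since $F_{\cal C}(b_\alpha,\ast)=\ast$ for every $\alpha$ (the internal function object into a terminal object is terminal), we get $F_{\proP{\cal C}}(B,\ast)=F_{\proP{\cal C}}(A,\ast)=\ast$, and the SM7 map collapses to $F_{\proP{\cal C}}(B,Y)\to F_{\proP{\cal C}}(A,Y)$; the axiom says this is a fibration, and it is acyclic because $i$ is. Part (2) is the mirror image: I would reduce (again by Ken Brown) to showing that $F_{\proP{\cal C}}(A,-)$ carries an acyclic strict fibration $p\co X\fibr Y$ between fibrant pro-objects to a weak equivalence, apply Proposition~\ref{prop:functionobjects}(3) to the pair $(\emptyset\cofb A,\,p)$, and use that $F_{\cal C}(\emptyset,-)=\ast$ --- because $\emptyset\owedge w=\emptyset$ in a tensor model category --- to collapse $F_{\proP{\cal C}}(\emptyset,-)$ to the point, leaving $F_{\proP{\cal C}}(A,X)\to F_{\proP{\cal C}}(A,Y)$ as an acyclic fibration.

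For (3), I would define $F^{weak}(X,Y)$ on cofibrant-fibrant pro-objects to be the image of $F_{\proP{\cal C}}(X,Y)$ in $ho(\proP{\cal C})$, extending to all of $ho(\proP{\cal C})$ via cofibrant and fibrant replacements exactly as in the closed case discussed after Definition~\ref{def:weakfunction}. By (1) and (2) the point-set functor $F_{\proP{\cal C}}$ takes weak equivalences between cofibrant-fibrant pro-objects to weak equivalences in each variable, so it descends to a functor $ho(\proP{\cal C})^{op}\times ho(\proP{\cal C})\to ho(\proP{\cal C})$; one checks compatibility with the homotopy relation on maps using that the SM7 axiom preserves path objects up to weak equivalence. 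The natural transformation demanded by Definition~\ref{def:weakfunction} is the one already built into $F_{\proP{\cal C}}$, namely $\Map_{\proP{\cal C}}(X\owedge Y,Z)\to\Map_{\proP{\cal C}}(X,F_{\proP{\cal C}}(Y,Z))$. To see it represents the asserted map of derived mapping spaces, note that for cofibrant $X$, $Y$ and fibrant $Z$ the levelwise tensor $X\owedge Y$ is cofibrant (Remark~\ref{rmk:empty-tensor}), so the source computes $\mb R\Map_{\proP{\cal C}}(X\owedge^{\mb L}Y,Z)$; and applying Proposition~\ref{prop:functionobjects}(3) to the pair $(\emptyset\cofb Y,\,Z\fibr\ast)$ shows $F_{\proP{\cal C}}(Y,Z)$ is fibrant, so the target computes $\mb R\Map_{\proP{\cal C}}(X,F^{weak}(Y,Z))$ since $X$ is cofibrant.

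The one genuinely delicate point --- homotopical control over the mixed $\lim_\beta\colim_\alpha$ function object --- has already been expended in Proposition~\ref{prop:functionobjects}, so what remains is bookkeeping rather than mathematics. The main thing to be careful about is that $F_{\proP{\cal C}}$ is only homotopically meaningful on cofibrant-fibrant pro-objects, and hence that the comparison of Definition~\ref{def:weakfunction}, phrased in terms of $\mb R\Map$, really is computed on the nose by the point-set natural transformation once cofibrant-fibrant representatives are fixed; the fibrancy of $F_{\proP{\cal C}}(Y,Z)$ observed above is precisely what makes this step go through.
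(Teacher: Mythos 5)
Your proposal is correct and follows essentially the same route as the paper: Ken Brown's lemma reduces each of (1) and (2) to the SM7 statement of Proposition~\ref{prop:functionobjects}(3) applied with the fibration $Y \fibr \ast$ (resp. the cofibration $\emptyset \cofb A$), and (3) is then the formal consequence, which you simply spell out in more detail (including the useful observation that $F_{\proP{\cal C}}(Y,Z)$ is fibrant) than the paper's one-line remark.
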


\begin{proof}
  By Ken Brown's lemma \cite[1.1.12]{hovey-modelcategories}, to prove
  the first item it suffices to prove that $F_{\proP{\cal C}}(-,Y)$ takes
  acyclic cofibrations to weak equivalences.  This follows by applying
  the SM7 property to an acyclic cofibration $X \to X'$ and the
  fibration $Y \to *$.
  
  The second item follows exactly as in the previous case by applying
  the SM7 property to an acyclic fibration.  The final item is then a
  direct consequence.
\end{proof}

The following proposition allows us to gain homotopical control on
function objects from the associated pro-objects in the homotopy
category.

\begin{prop}
\label{prop:functionhomotopytype}
  Suppose that filtered colimits preserve fibrations, represent
  homotopy colimits, and commute with finite limits in ${\cal C}$.
  For levelwise cofibrant $X$ and levelwise fibrant $Y$ in
  $\proP{\cal C}$ with fibrant replacement $Y'$, the map
  $F_{\proP{\cal C}}(X,Y) \to F_{\proP{\cal C}}(X,Y')$ is a weak
  equivalence.  The representative $F_{\proP{\cal C}}(X,Y')$ for the
  homotopically correct weak function object $F^{weak}(X,Y')$ is a
  representative for the homotopy type
\[
\{\hocolim_\alpha \mb RF_{\cal C}(x_\alpha, y_\beta)\}_\beta.
\]
\end{prop}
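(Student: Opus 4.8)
The plan is to prove both assertions by reducing to levelwise statements in ${\cal C}$ and then feeding in the three standing hypotheses on filtered colimits. First I would pin down convenient representatives. Since $X$ is levelwise cofibrant it is already strict cofibrant: the map $\emptyset \to X$ is isomorphic in the arrow category to the level map $\{\emptyset \to x_\alpha\}_\alpha$ — the constant pro-object at the initial object is itself initial — and each of its components is a cofibration. Using Remark~\ref{rmk:injectivemodel} I would reindex $Y$ over a cofinite directed set and take $Y'$ to be an injective fibrant diagram receiving a levelwise acyclic cofibration $Y \to Y'$; then $Y$ and $Y'$ are both levelwise fibrant, $Y'$ is strict fibrant, and $Y \to Y'$ is a level map each of whose components $y_\beta \to y'_\beta$ is a weak equivalence between fibrant objects. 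Because $X$ is strict cofibrant and $Y'$ is strict fibrant, the Corollary following Proposition~\ref{prop:functionobjects} already tells us that $F_{\proP{\cal C}}(X,Y')$ is a representative for the weak function object $F^{weak}(X,Y')$; so the work lies in the weak equivalence and in the identification of the homotopy type.

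For the weak equivalence, I would note that since $Y \to Y'$ is a level map, $F_{\proP{\cal C}}(X,-)$ carries it to a level map indexed by $Y$, namely in degree $\beta$ the map $\colim_\alpha F_{\cal C}(x_\alpha, y_\beta) \to \colim_\alpha F_{\cal C}(x_\alpha, y'_\beta)$. Each $x_\alpha$ is cofibrant, so $F_{\cal C}(x_\alpha,-)$ is a right Quillen functor and hence, by Ken Brown's lemma, sends the weak equivalence $y_\beta \to y'_\beta$ between fibrant objects to a weak equivalence; and filtered colimits, representing homotopy colimits, preserve weak equivalences, so the colimit over $\alpha$ is still a weak equivalence. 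Therefore $F_{\proP{\cal C}}(X,Y) \to F_{\proP{\cal C}}(X,Y')$ is an essentially levelwise, hence strict, weak equivalence.

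For the homotopy type, I would use that $F_{\proP{\cal C}}(X,Y)$ is, by definition, the pro-object $\{\colim_\alpha F_{\cal C}(x_\alpha,y_\beta)\}_\beta$. For each $\alpha$ and $\beta$, the fact that $x_\alpha$ is cofibrant and $y_\beta$ is fibrant makes $F_{\cal C}(x_\alpha,y_\beta)$ a representative for the derived function object $\mb RF_{\cal C}(x_\alpha,y_\beta)$; and since filtered colimits represent homotopy colimits, $\colim_\alpha F_{\cal C}(x_\alpha,y_\beta)$ represents $\hocolim_\alpha \mb RF_{\cal C}(x_\alpha,y_\beta)$, naturally in $\beta$. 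Hence $F_{\proP{\cal C}}(X,Y)$, and so the weakly equivalent object $F_{\proP{\cal C}}(X,Y')$, represents the homotopy type $\{\hocolim_\alpha \mb RF_{\cal C}(x_\alpha,y_\beta)\}_\beta$. The step I expect to be the main obstacle is this passage from level maps in ${\cal C}$ to maps of pro-objects together with the homotopical bookkeeping it entails: one must be sure that $F_{\proP{\cal C}}(X,-)$ can genuinely be evaluated levelwise on the maps in question, and that filtered colimits in ${\cal C}$ both preserve weak equivalences and compute homotopy colimits — which is exactly where the hypothesis that filtered colimits represent homotopy colimits does the essential work.
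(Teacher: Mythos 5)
Your argument is correct and follows essentially the same route as the paper: reindex $Y$ over a cofinite directed set, take a strict fibrant replacement $Y'$ that is a levelwise equivalence of levelwise fibrant diagrams, observe that each $F_{\cal C}(x_\alpha,y_\beta)\to F_{\cal C}(x_\alpha,y'_\beta)$ is an equivalence of derived function objects, and use that filtered colimits preserve weak equivalences and represent homotopy colimits to conclude both the strict equivalence and the identification of the homotopy type. The only cosmetic difference is that you invoke Ken Brown's lemma explicitly and phrase the fibrant replacement via the injective rather than the Reedy structure, which Remark~\ref{rmk:injectivemodel} identifies anyway.
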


\begin{proof}
  This argument closely follows {\cite[5.3]{fausk-isaksen-tmodel}}.
  By assumption $X$ is strict cofibrant, and by reindexing we may
  assume that $Y$ is indexed by a cofinite directed set $I$ and still
  levelwise fibrant.  The index category $I$ is a Reedy category, so
  we may choose a Reedy fibrant replacement $Y \to Y'$ which is
  a levelwise weak equivalence so that the maps $y'_\beta \to
  \lim_{\gamma < \beta} y'_\gamma$ are fibrations.  In particular,
  $y'_\beta$ is always fibrant.

  The levelwise properties imply that the function objects $F_{\cal
    C}(x_\alpha, y_\beta)$ are representatives for the derived
  function objects, and that the maps $F_{\cal C}(x_\alpha, y_\beta)
  \to F_{\cal C}(x_\alpha, y'_\beta)$ are weak equivalences.

  As in the proof of Proposition~\ref{prop:functionobjects},  the
  homotopically correct function object $\{\colim_\alpha F_{\cal
    C}(x_\alpha, y'_\beta)\}_\beta$ is levelwise equivalent to
  $\{\colim_\alpha F_{\cal C}(x_\alpha, y_\beta)\}_\beta$.  Since
  colimits represent homotopy colimits, we obtain the desired result.
\end{proof}

\subsection{Pro-dualizable objects}

Continuing the assumptions of Section~\ref{sec:function-objects}, we
now begin to study dualizability.

\begin{defn}
  For an object $x \in ho({\cal C})$, the {\em dual} $Dx$ is
  the function object $\mb RF_{\cal C}(x,\mb I)$.  The map $Dx
  \owedge^{\mb L} x \to \mb I$ is the {\em evaluation pairing}.

Given $y \in ho({\cal C})$, the adjoint to the map $Dx \owedge^{\mb L} x \owedge^{\mb
    L} y \to y$ is a natural transformation $Dx \owedge^{\mb L} y \to
  \mb RF_{\cal C}(x,y)$.  The object $x \in ho({\cal C})$ is {\em
    dualizable} if this map is a natural isomorphism of functors on
  $ho({\cal C})$.

  An object $X \in ho(\proP{\cal C})$ is {\em pro-dualizable} if it is
  isomorphic in the homotopy category to a cofiltered diagram of
  objects whose images in the homotopy category are dualizable.
\end{defn}

\begin{rmk}
  We follow \cite{hovey-strickland-moravaktheory} in using the term
  {\em dualizable}, rather than the term {\em strongly dualizable}
  from \cite{hovey-palmieri-strickland-axiomatic}.
\end{rmk}

The following are immediate consequences of the definitions.

\begin{prop}
  The unit $\mb I$ is dualizable.  Dualizable objects are closed under
  the tensor in $ho({\cal C})$, and pro-dualizable objects are closed
  under the levelwise tensor in $ho(\proP{\cal C})$.
\end{prop}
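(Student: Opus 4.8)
The plan is to verify the three assertions in turn, the second one (closure of dualizable objects under $\owedge^{\mb L}$) carrying essentially all of the content.  For the unit: since $\mb I$ is cofibrant and $F_{\cal C}(\mb I,-)$ is naturally the identity functor, we have $D\mb I = \mb RF_{\cal C}(\mb I,\mb I) \cong \mb I$, and under this identification the evaluation pairing $D\mb I \owedge^{\mb L} \mb I \to \mb I$ is the unit isomorphism.  Unwinding the definition, the comparison map $D\mb I \owedge^{\mb L} y \to \mb RF_{\cal C}(\mb I,y)$ is the adjoint of the map $(D\mb I \owedge^{\mb L} \mb I)\owedge^{\mb L} y \to y$ induced by the evaluation pairing; under the identifications $D\mb I \cong \mb I$ and $\mb RF_{\cal C}(\mb I,y)\cong y$ this becomes the unit isomorphism $\mb I \owedge^{\mb L} y \cong y$, hence it is an isomorphism and $\mb I$ is dualizable.

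For closure under the tensor, let $x, x'$ be dualizable and choose cofibrant representatives, so that $x\owedge x'$ is again cofibrant (Remark~\ref{rmk:empty-tensor}) and computes $x \owedge^{\mb L} x'$.  Deriving the Hom--tensor adjunction of the symmetric monoidal closed category ${\cal C}$ --- that is, deriving the composite of the Quillen adjunctions $((-)\owedge x',\, F_{\cal C}(x',-))$ and $((-)\owedge x,\, F_{\cal C}(x,-))$ --- yields a natural isomorphism $\mb RF_{\cal C}(x\owedge^{\mb L} x',\, y) \cong \mb RF_{\cal C}(x',\, \mb RF_{\cal C}(x, y))$.  Applying dualizability of $x$ and then of $x'$ rewrites the right-hand side successively as $\mb RF_{\cal C}(x',\, Dx\owedge^{\mb L} y)$ and then as $Dx'\owedge^{\mb L} Dx\owedge^{\mb L} y$; taking $y=\mb I$ identifies $D(x\owedge^{\mb L} x')\cong Dx'\owedge^{\mb L} Dx$.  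The remaining point --- and the only place real care is needed --- is to check that, under this identification, the whole chain of isomorphisms is the \emph{canonical} comparison map $D(x\owedge^{\mb L} x')\owedge^{\mb L} y \to \mb RF_{\cal C}(x\owedge^{\mb L} x', y)$ built from the evaluation pairing of $x \owedge^{\mb L} x'$.  This is a diagram chase tracing the evaluation pairings of $x$, $x'$, and $x\owedge^{\mb L} x'$ through the Hom--tensor adjunction; granting it, $x\owedge^{\mb L} x'$ is dualizable.

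Finally, for pro-objects: if $X$ and $X'$ are pro-dualizable, then by definition they are isomorphic in $ho(\proP{\cal C})$ to cofiltered diagrams whose levels are dualizable in $ho({\cal C})$, and by Remark~\ref{rmk:injectivemodel} these diagrams may be taken levelwise cofibrant.  Then the levelwise tensor $X\owedge X' = \{x_\alpha\owedge x'_\gamma\}_{\alpha\times\gamma}$ is levelwise cofibrant, hence (as in the proof of Proposition~\ref{prop:tensorquillenadjunction}) represents $X\owedge^{\mb L} X'$, and each entry $x_\alpha\owedge^{\mb L} x'_\gamma$ is dualizable in $ho({\cal C})$ by the previous paragraph.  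Thus $X\owedge^{\mb L} X'$ is isomorphic in $ho(\proP{\cal C})$ to a cofiltered diagram of dualizable objects, i.e.\ it is pro-dualizable.  The main obstacle in the whole argument is the compatibility verification in the middle paragraph; the rest is a direct unwinding of the definitions together with standard bookkeeping involving cofibrant replacements.
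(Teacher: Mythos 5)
Your proposal is correct, and it matches the paper's (implicit) argument: the paper offers no proof, asserting the proposition to be an immediate consequence of the definitions, and your write-up is the expected unwinding of those definitions. The one step you defer---checking that the composite of adjunction isomorphisms agrees with the canonical comparison map $D(x\owedge^{\mb L}x')\owedge^{\mb L}y \to \mb RF_{\cal C}(x\owedge^{\mb L}x',y)$---is a routine naturality chase, and it can also be avoided entirely by using the standard evaluation/coevaluation (triangle-identity) characterization of dualizability, as in Hovey--Palmieri--Strickland, which the paper itself invokes later for closure of dualizable objects under tensor products.
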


Suppose that the unit object $\mb I$ is {\em compact}, in the sense
that the functor $\mb R\Map_{\cal C}(\mb I,-)$ commutes with
filtered homotopy colimits.  Then the natural equivalence
\[
\mb R\Map_{\cal C}(x,y) \simeq \mb R\Map_{\cal C}(\mb I, Dx
\owedge^{\mb L} y)
\]
implies that $\mb R\Map_{\cal C}(x,-)$ commutes with filtered homotopy
colimits.  We then have the following result, which is similar in
spirit to the earlier results \cite[B.3,~(2)]{bauer-formal} and
\cite[9.15]{fausk-equivariant}.

\begin{prop}
\label{prop:dualizableadjoint}
Suppose that filtered colimits preserve fibrations, represent homotopy
colimits, and commute with finite limits in ${\cal C}$.  In addition,
suppose that $\mb R\Map_{\cal C}(\mb I,-)$ commutes with filtered
homotopy colimits.  Let $X \in ho(\proP{\cal C})$ be pro-dualizable.
Then, for any $Y$ and $Z$ in $ho(\proP{\cal C})$, the weak function
object $F^{weak}$ provides an adjoint to the map $X \owedge^{\mb L} Y
\to Z$ (\ref{def:weakfunction}).
\end{prop}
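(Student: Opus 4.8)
The plan is to unwind the phrase ``provides an adjoint'' and match the two homotopy types using the identifications of mapping spaces and function objects established earlier in this section. Concretely, we must show that the natural transformation
\[
\mb R\Map_{\proP{\cal C}}(X \owedge^{\mb L} Y, Z) \to \mb R\Map_{\proP{\cal C}}(X, F^{weak}(Y,Z))
\]
of Definition~\ref{def:weakfunction} is an isomorphism in the homotopy category of spaces. First I would reduce to a convenient model. Since $X$ is pro-dualizable, replace it by an isomorphic pro-object $\{x_\alpha\}_\alpha$ in which each $x_\alpha$ is a cofibrant dualizable object of ${\cal C}$; replace $Y$ by a levelwise cofibrant-fibrant representative $\{y_\beta\}_\beta$ and $Z$ by a levelwise fibrant representative $\{z_\gamma\}_\gamma$ with strict fibrant replacement $Z'$. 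Then $X \owedge Y$ is levelwise cofibrant, hence strict cofibrant, and by the proof of Proposition~\ref{prop:functionhomotopytype} the pro-object $F_{\proP{\cal C}}(Y,Z')$ represents $F^{weak}(Y,Z)$ and is levelwise fibrant, so both mapping spaces can be computed by Proposition~\ref{prop:maphomotopytype}.

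Next I would write down the two homotopy types explicitly. Proposition~\ref{prop:maphomotopytype} identifies the left-hand side with
\[
\holim_\gamma \hocolim_{\alpha\times\beta} \mb R\Map_{\cal C}(x_\alpha \owedge^{\mb L} y_\beta, z_\gamma),
\]
while Proposition~\ref{prop:functionhomotopytype} identifies $F^{weak}(Y,Z)$ with the pro-object $\{\hocolim_\beta \mb RF_{\cal C}(y_\beta, z_\gamma)\}_\gamma$, and then a second application of Proposition~\ref{prop:maphomotopytype} identifies the right-hand side with
\[
\holim_\gamma \hocolim_\alpha \mb R\Map_{\cal C}\bigl(x_\alpha, \hocolim_\beta \mb RF_{\cal C}(y_\beta, z_\gamma)\bigr).
\]

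The crux is then to move $\mb R\Map_{\cal C}(x_\alpha,-)$ inside the homotopy colimit over $\beta$. Because $x_\alpha$ is dualizable, there is a natural equivalence $\mb R\Map_{\cal C}(x_\alpha,-) \simeq \mb R\Map_{\cal C}(\mb I, Dx_\alpha \owedge^{\mb L} -)$; the functor $Dx_\alpha \owedge^{\mb L}(-)$ is a left derived functor and so commutes with filtered homotopy colimits, and $\mb R\Map_{\cal C}(\mb I,-)$ does so by hypothesis, whence $\mb R\Map_{\cal C}(x_\alpha,-)$ commutes with filtered homotopy colimits. Combining this with the internal adjunction equivalence $\mb R\Map_{\cal C}(x_\alpha, \mb RF_{\cal C}(y_\beta,z_\gamma)) \simeq \mb R\Map_{\cal C}(x_\alpha \owedge^{\mb L} y_\beta, z_\gamma)$ in the closed symmetric monoidal category ${\cal C}$, and the identification $\hocolim_\alpha\hocolim_\beta \simeq \hocolim_{\alpha\times\beta}$ of iterated filtered homotopy colimits, both sides become $\holim_\gamma \hocolim_{\alpha\times\beta} \mb R\Map_{\cal C}(x_\alpha \owedge^{\mb L} y_\beta, z_\gamma)$.

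Finally I would check that the equivalence so obtained is exactly the comparison map of Definition~\ref{def:weakfunction}, not merely an abstract equivalence of spaces; this is a diagram chase, since the comparison transformations of Propositions~\ref{prop:maphomotopytype} and \ref{prop:functionhomotopytype}, the construction of $F_{\proP{\cal C}}$, and the internal adjunction are all assembled from the single natural isomorphism $\Map_{\cal C}(x\owedge y, z) \cong \Map_{\cal C}(x, F_{\cal C}(y,z))$. I expect the main obstacle to be the interchange of the filtered homotopy colimit over $\beta$ past $\mb R\Map_{\cal C}(x_\alpha,-)$: this is precisely where pro-dualizability of $X$, together with compactness of $\mb I$, is indispensable, and it is exactly the step that fails in general when the levelwise tensor product has no right adjoint.
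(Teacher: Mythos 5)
Your proposal is correct and follows essentially the same route as the paper: reduce to levelwise cofibrant dualizable, strict cofibrant, and levelwise/special fibrant representatives, identify both mapping spaces via Propositions~\ref{prop:maphomotopytype} and \ref{prop:functionhomotopytype}, and use dualizability together with the compactness hypothesis on $\mb I$ (via $\mb R\Map_{\cal C}(x_\alpha,-) \simeq \mb R\Map_{\cal C}(\mb I, Dx_\alpha \owedge^{\mb L} -)$) to pass the filtered homotopy colimit over $\beta$ through $\mb R\Map_{\cal C}(x_\alpha,-)$ and reduce to the internal adjunction in ${\cal C}$. Your closing remark that the resulting equivalence agrees with the comparison map of Definition~\ref{def:weakfunction} is a point the paper leaves implicit, but it is the same argument.
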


\begin{proof}
  Without loss of generality, we can lift $X$ to a pro-object
  represented by a diagram which is levelwise cofibrant and
  dualizable.  Similarly, we choose lifts of $Y$ to a strict cofibrant
  diagram and $Z$ to a special fibrant diagram, which in particular is
  levelwise fibrant.

  Combining Propositions~\ref{prop:maphomotopytype} and 
  \ref{prop:functionhomotopytype}, the natural
  transformation
\[
\Map_{\proP{\cal C}}(X \owedge^{\mb L} Y, Z) \to \Map_{\proP{\cal
    C}}(X,F^{weak}(Y,Z))
\] 
is naturally represented by the map of homotopy types
\[
\holim_\gamma \hocolim_{\alpha,\beta} \mb R\Map_{\cal C}(x_\alpha
\owedge y_\beta, z_\gamma) \to \holim_\gamma \hocolim_\alpha \mb
R\Map_{\cal C}(x_\alpha, \hocolim_\beta \mb RF_{\cal C}(y_\beta,
z_\gamma)).
\]
As $\mb R\Map_{\cal C}(x_\alpha,-)$ commutes with filtered homotopy
colimits, this reduces to the adjunction $\mb R\Map_{\cal C}(x_\alpha
\owedge y_\beta, z_\gamma) \cong \mb R\Map_{\cal C}(x_\alpha, F_{\cal
  C}(y_\beta, z_\gamma))$.
\end{proof}

Combining this with Theorem~\ref{thm:rigidlifting}, we have the
following result.

\begin{thm}
\label{thm:rigiddualizable}
Suppose that filtered colimits preserve fibrations, represent homotopy
colimits, and commute with finite limits in ${\cal C}$.  In addition,
suppose that $\mb R\Map_{\cal C}(\mb I,-)$ commutes with filtered
homotopy colimits.  Let $\eta\co X \to Y$ in $ho(\proP{\cal C})$ be a
rigid map between pro-dualizable objects.  If $X$ is an algebra over a
cofibrant operad ${\cal O}$, then there exists an ${\cal O}$-algebra
structure on $Y$, compatible with $\eta$, which is unique up to
homotopy.
\end{thm}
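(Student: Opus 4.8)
The plan is to obtain this as a direct application of Theorem~\ref{thm:rigidlifting} with ${\cal D}=\proP{\cal C}$. The first step is bookkeeping: under the standing hypotheses of Section~\ref{sec:function-objects}, the category $\proP{\cal C}$ with the strict model structure and the levelwise tensor is a proper operadic model category, so all of the constructions of Sections~\ref{sec:operads} and~\ref{sec:rigid} apply to it. In particular it has derived endomorphism operads $\mb R\End_{\proP{\cal C}}(-)$ via Theorem~\ref{thm:homotopy-end} (operads in simplicial sets form a model category, so the required homotopy category of monoids exists), and, because filtered colimits in ${\cal C}$ behave as assumed, the functor $F_{\proP{\cal C}}(-,-)$ descends to a weak function object $F^{weak}$ for $ho(\proP{\cal C})$ in the sense of Definition~\ref{def:weakfunction}. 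With respect to this $F^{weak}$, the standing hypothesis that $\eta\co X \to Y$ is a rigid map between pro-dualizable objects is precisely the rigidity hypothesis of Theorem~\ref{thm:rigidlifting}.

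Next I would check the remaining input to Theorem~\ref{thm:rigidlifting}: that $F^{weak}$ provides adjoints, for all $n, m \geq 0$, for $(X^{\owedge^{\mb L} n}\owedge^{\mb L} Y^{\owedge^{\mb L} m})\owedge^{\mb L} X \to Y$ and $(X^{\owedge^{\mb L} n}\owedge^{\mb L} Y^{\owedge^{\mb L} m})\owedge^{\mb L} Y \to Y$. The idea is to regroup the source of the first map as $X^{\owedge^{\mb L}(n+1)}\owedge^{\mb L} Y^{\owedge^{\mb L} m}$ and the source of the second as $X^{\owedge^{\mb L} n}\owedge^{\mb L} Y^{\owedge^{\mb L}(m+1)}$, so that in each case it has the form $X'\owedge^{\mb L} Y'$ where $X'$ is a (possibly empty) levelwise tensor power of $X$. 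Since $\mb I$ is dualizable and pro-dualizable objects are closed under the levelwise tensor in $ho(\proP{\cal C})$, each such $X'$ is pro-dualizable; hence Proposition~\ref{prop:dualizableadjoint}, whose hypotheses on filtered colimits and on $\mb R\Map_{\cal C}(\mb I,-)$ are exactly those assumed in the present theorem, shows that $F^{weak}$ provides an adjoint for $X'\owedge^{\mb L} Y' \to Y$. Thus every hypothesis of Theorem~\ref{thm:rigidlifting} holds for $\eta$.

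Finally, I would read off the conclusion. Theorem~\ref{thm:rigidlifting} gives that $\mb R\End_{\proP{\cal C}}(\eta) \to \mb R\End_{\proP{\cal C}}(X)$ is an equivalence of operads and that, for the homotopy class $\theta\co {\cal O}\to\mb R\End_{\proP{\cal C}}(X)$ recording the given ${\cal O}$-algebra structure on $X$, the homotopy fiber over $\theta$ of
\[
\mb R\Map_{operad}({\cal O},\mb R\End_{\proP{\cal C}}(\eta)) \to \mb R\Map_{operad}({\cal O},\mb R\End_{\proP{\cal C}}(X))
\]
is contractible. A point of the source lying over $\theta$ is the same datum as an ${\cal O}$-algebra structure on $Y$ together with the structure making $\eta$ a map of ${\cal O}$-algebras extending $\theta$; contractibility of the fiber gives at once the existence of such a structure on $Y$ and its uniqueness up to homotopy, which is the assertion.

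I do not expect a genuine obstacle here: Proposition~\ref{prop:dualizableadjoint} and Theorem~\ref{thm:rigidlifting} carry all of the analytic content, and what remains is formal. The only points requiring any care are confirming that $\proP{\cal C}$ genuinely inherits enough structure for the earlier sections to apply, and noticing in the regrouping step that it is harmless for the (generally non-dualizable) powers of $Y$ to occupy the second slot of Proposition~\ref{prop:dualizableadjoint}, the degenerate case $n=0$ being covered by the dualizability of the unit $\mb I$.
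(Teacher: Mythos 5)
Your overall plan---apply Theorem~\ref{thm:rigidlifting} in ${\cal D}=\proP{\cal C}$, with the weak function object of Section~\ref{sec:function-objects} and with Proposition~\ref{prop:dualizableadjoint} supplying the adjoints---is exactly how the paper obtains this theorem, but your verification of the adjoint hypotheses contains a genuine misstep. In Definition~\ref{def:weakfunction}, ``provides an adjoint for $A \owedge^{\mb L} B \to C$'' is a statement about the specific triple $(A,B,C)$: it asserts that $\mb R\Map(A\owedge^{\mb L}B, C)\to\mb R\Map(A, F^{weak}(B,C))$ is an isomorphism, and which factor occupies the second slot matters. Theorem~\ref{thm:rigidlifting} needs this for the triples $\bigl(X^{\owedge^{\mb L} n}\owedge^{\mb L}Y^{\owedge^{\mb L} m},\, X,\, Y\bigr)$ and $\bigl(X^{\owedge^{\mb L} n}\owedge^{\mb L}Y^{\owedge^{\mb L} m},\, Y,\, Y\bigr)$, because its proof maps $X^{\owedge^{\mb L} n}\owedge^{\mb L}Y^{\owedge^{\mb L} m}$ into the rigidity equivalence $F^{weak}(Y,Y)\to F^{weak}(X,Y)$. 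Your regrouping instead produces adjoints for the triples $(X^{\owedge^{\mb L}(n+1)}, Y^{\owedge^{\mb L} m}, Y)$ and $(X^{\owedge^{\mb L} n}, Y^{\owedge^{\mb L}(m+1)}, Y)$, i.e.\ isomorphisms of the form $\mb R\Map(X^{\owedge^{\mb L}(n+1)}\owedge^{\mb L} Y^{\owedge^{\mb L} m}, Y)\cong \mb R\Map(X^{\owedge^{\mb L}(n+1)}, F^{weak}(Y^{\owedge^{\mb L} m},Y))$. These involve the objects $F^{weak}(Y^{\owedge^{\mb L} m},Y)$, cannot be compared using the rigidity of $\eta$ as defined, and are not the hypotheses of Theorem~\ref{thm:rigidlifting}; so as written the lifting theorem has not been made applicable.

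The source of the error is your parenthetical that the powers of $Y$ are ``generally non-dualizable'': the theorem assumes $\eta$ is a rigid map \emph{between pro-dualizable objects}, so $Y$ is pro-dualizable as well. Consequently no regrouping is needed or wanted: $X^{\owedge^{\mb L} n}\owedge^{\mb L}Y^{\owedge^{\mb L} m}$ is itself pro-dualizable, since pro-dualizable objects are closed under the levelwise tensor and the case $n=m=0$ is the (dualizable) unit, and Proposition~\ref{prop:dualizableadjoint} applies directly with this object in the first slot and with $X$ (resp.\ $Y$) in the second slot and target $Y$. That gives precisely the adjoints Theorem~\ref{thm:rigidlifting} requires; with this one-line correction the rest of your argument (the bookkeeping that $\proP{\cal C}$ is a proper operadic model category and the contractible-fiber reading of the conclusion) is correct and coincides with the paper's argument.
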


\section{Symmetric spectra and filtered colimits}
\label{sec:modelcats}

In this section, we verify several conditions on model categories of
interest in this paper.  In particular, we show that the ``base
category'' of symmetric spectra is a proper operadic model category,
and hence, has an associated model category of pro-objects that is
operadic. Also, we show that this base model structure satisfies
several required assumptions from Section~\ref{sec:pro-objects}.


We write $\Sp$ for the category of symmetric spectra in simplicial
sets described in \cite{hovey-shipley-smith-symmetric}.  For $R$ a
ring object in $\Sp$, we write $\Sp_R$ for the category of
$R$-modules.  We will follow \cite{schwede-book} (which uses the term
``absolute flat stable'') in using the term {\em flat stable} model
structure for what is called the $R$-model structure in
\cite{shipley-convenient}.

The properties of filtered colimits preserving fibrations, preserving
weak equivalences, and commuting with finite limits are true in the 
category of simplicial sets, and are inherited by several categories
based on diagrams of them.

\begin{prop}
  \label{prop:symspec-work}
  Let $R$ be a commutative ring object in $\Sp$.  Under the flat
  stable model structure, the category $\Sp_R$ is a proper monoidal
  model category under $\smsh{R}$ with a compatible simplicial
  enrichment and cofibrant unit.  In this category, filtered colimits
  represent homotopy colimits, commute with finite limits, and
  preserve fibrations.  Mapping spaces out of $R$ commute with
  filtered homotopy colimits.
\end{prop}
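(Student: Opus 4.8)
The plan is to assemble this omnibus statement from two kinds of input. Essentially all of the model-categorical content is standard for the flat stable model structure and will be imported from \cite{schwede-book} and \cite{shipley-convenient} (with \cite{hovey-shipley-smith-symmetric} and \cite{hovey-modelcategories} supplying the underlying machinery); the only real work is to observe that every homotopically delicate assertion about filtered colimits in $\Sp_R$ can be pushed down to $\Sp$, and from $\Sp$ to simplicial sets, where it is classical.

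First I would record the model-categorical claims. The flat stable model structure on $\Sp$ is monoidal under $\smsh{\mb S}$ --- the pushout product axiom holds, and, crucially, the sphere spectrum $\mb S$ is cofibrant, so the unit axiom is automatic --- and it is left and right proper and a simplicial model category \cite{schwede-book,shipley-convenient}. The flat stable model structure on $\Sp_R$ is created by the forgetful functor $U\co\Sp_R\to\Sp$, which detects weak equivalences and fibrations and creates all limits and colimits; from this, monoidality under $\smsh{R}$, properness, and the simplicial structure are all inherited. The unit of $\Sp_R$ is $R$, the image of the cofibrant object $\mb S$ under the left Quillen free functor $R\smsh{\mb S}(-)\co\Sp\to\Sp_R$, so $R$ is cofibrant; hence $\Map_{\Sp_R}(R,-)$ is a right Quillen functor, and the simplicial structure is the geometric one, with natural isomorphisms $K\otimes x \cong x\smsh{R}(R\smsh{\mb S}\Sigma^\infty K_+) \cong (R\smsh{\mb S}\Sigma^\infty K_+)\smsh{R}x$ exhibiting $\Sp_R$ as a simplicial symmetric tensor (i.e., operadic) model category with cofibrant unit.

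Next I would handle the assertions about filtered colimits. Since $U$ creates colimits, finite limits, fibrations, and weak equivalences, it suffices to prove the three properties in $\Sp$ and pull them back along $U$. That filtered colimits in $\Sp$ commute with finite limits is elementary: colimits and finite limits in $\Sp$ are computed levelwise in simplicial sets, where filtered colimits commute with finite limits. For the other two, the point is that the generating cofibrations and generating acyclic cofibrations of the flat stable model structure --- the level generators obtained by applying $F_m$ to generating (acyclic) cofibrations of $\Sigma_m$-equivariant simplicial sets, together with the stabilization maps $F_{m+1}(S^1)\to F_m(S^0)$ --- all have finitely presentable source and target \cite{hovey-shipley-smith-symmetric,schwede-book}. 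Hence the right lifting property against either class is preserved by filtered colimits, so filtered colimits preserve fibrations and acyclic fibrations; moreover a filtered colimit of stable equivalences is a stable equivalence \cite{hovey-shipley-smith-symmetric,schwede-book}, so filtered colimits preserve weak equivalences and thus compute homotopy colimits. Finally, for mapping spaces out of $R$: for fibrant $Y$ the derived mapping space $\mb R\Map_{\Sp_R}(R,Y)=\Map_{\Sp_R}(R,Y)$ is, by the free--forgetful adjunction, $\Map_{\Sp}(\mb S,UY)$, the underlying space $\Omega^\infty(UY)$ of the (fibrant) spectrum $UY$; since $U$ preserves all colimits and all weak equivalences it preserves filtered homotopy colimits, so it remains to note that $\mb R\Omega^\infty=\mb R\Map_{\Sp}(\mb S,-)$ commutes with filtered homotopy colimits of symmetric spectra, which holds because the stable homotopy groups $\pi_k$ do.

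The one place where genuine care is needed --- the main obstacle --- is the behavior of weak equivalences and fibrations under filtered colimits in the \emph{flat} stable structure rather than the projective one. One must verify that it is the flat level generators, and not merely the projective ones, that have finitely presentable source and target; this holds, and it is exactly what lets the finite-generation argument run verbatim. It is worth contrasting this good behavior with its failure for pro-objects noted in Section~\ref{sec:intro}, which is precisely why the remainder of the paper cannot simply proceed levelwise.
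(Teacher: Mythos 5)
Your proposal is correct, and for most of the statement it follows the same path as the paper: cite Shipley's flat ($R$-)model structure for properness, monoidality, the simplicial enrichment and the cofibrant unit; deduce commutation of filtered colimits with finite limits levelwise in simplicial sets; get preservation of fibrations from the finitely presentable sources and targets of the flat generating (acyclic) cofibrations; and identify $\mb R\Map_{\Sp_R}(R,-)$ with $\Omega^\infty$ of the underlying spectrum to handle the last clause. The one genuine divergence is the claim that filtered colimits represent homotopy colimits: you import the external fact that stable equivalences of symmetric spectra are closed under filtered colimits (true, and available in Schwede's book, though you should not expect to find it stated in \cite{hovey-shipley-smith-symmetric}) and then invoke the standard projective cofibrant-replacement argument, which you leave implicit. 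The paper deliberately avoids that input: it notes that the flat level and flat stable structures on $\Sp_R$ have the same cofibrations and acyclic fibrations, so the projective cofibrant replacement of a filtered diagram is objectwise a \emph{level} equivalence, and filtered colimits visibly preserve level equivalences, hence stable equivalences. Your route is shorter given the citation; the paper's is self-contained and only uses elementary levelwise facts. One further small point: you wave at the simplicial SM7 axiom as ``inherited,'' whereas the paper checks it by observing that $\Map_R(X,Y)$ is the degree-zero part of the internal function spectrum $F_R(X,Y)$ and that the degree-zero functor is right Quillen; adding a sentence to that effect would close the only place where your write-up is thinner than a complete argument.
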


\begin{rmk}
As in \cite[2.8]{shipley-convenient}, the identity functor is a
Quillen equivalence between the ordinary stable model structure and
the flat stable model structure.  By
Remark~\ref{rmk:easytensoradjunction}, the flat stable and ordinary
stable model structures are essentially equivalent for considering
operadic structures.
\end{rmk}

\begin{proof}
  By \cite[2.6, 2.7]{shipley-convenient}, the flat stable model
  structure makes $\Sp_R$ a proper monoidal model category.
  The simplicial enrichment is tensored and cotensored, with the
  tensor compatible by definition.
  
  The adjoint of the pushout product axiom implies that the
  internal function objects $F_R(-,-)$ obey an SM7 axiom in the flat
  stable model structure on $R$-modules.  To conclude that the
  simplicial enrichment satisfies the SM7 axiom, it suffices to note
  that $\Map_R(X,Y)$ is the degree zero portion of the function
  spectrum $F_R(X,Y)$, and that the functor taking an $R$-module to
  its degree zero portion is a right Quillen functor (with adjoint $R
  \wedge (-)$).

  As this model category is cofibrantly generated, and the
  generating cofibrations and acyclic cofibrations $A \cofb B$ have
  source and target which are compact, filtered colimits automatically
  preserve fibrations (cf. \cite[5.3.1]{behrens-davis-fixedpoints}).

  Filtered colimits and finite limits in $\Sp_R$ are formed levelwise
  in the category of pointed simplicial sets.  In particular, filtered
  colimits commute with finite limits and preserve level
  equivalences.

  Given a diagram $\{X_i\}_i$ in $\Sp_R$ indexed by a cofinite
  directed set $I$, the homotopy colimit is the left derived functor
  of colimit, and is formed by taking the colimit of a cofibrant
  replacement $\{X'_i\}_i \to \{X_i\}_i$ in the projective model
  structure on $I$-diagrams.  As the classes of cofibrations coincide
  in the projective model structures on $I$-diagrams for the flat
  level and flat stable model structures on $\Sp_R$, and similarly for
  the classes of acyclic fibrations, the above cofibrant replacement
  is, objectwise, a level equivalence.  The natural map $\colim_i X'_i
  \to \colim_i X_i$ is then a filtered colimit of level equivalences;
  it is therefore a level equivalence, and hence a stable equivalence.
  
  The functor $\Map_R(R,-)$ is the zeroth space functor, which
  commutes with homotopy colimits.
\end{proof}

\section{Towers of Moore spectra}
\label{sec:mainthm}

In this section we will show that there are towers of Moore spectra
admitting an $E_\infty$ structure.  We will deduce from this the
existence of $E_\infty$ structures on pro-spectrum lifts of the
$K(n)$-local spheres, the telescopic ($T(n)$-local) spheres, and the
Morava $E$-theories $E(k, \Gamma)$.

Throughout this section we fix a prime $p$.

\begin{thm}[{\cite[4.22]{hovey-strickland-moravaktheory}}]
For any integer $n \geq 1$, there is a tower $\{M_I\}_I$ of generalized Moore
spectra of type $n$ under $\mb S$ such that, for all finite spectra
$Z$ of type greater than or equal to $n$, the natural map
\[
Z \to \{Z \wedge M_I\}_I
\]
is an isomorphism of pro-objects in the homotopy
category of spectra. 

Any two such towers are isomorphic as pro-objects in the
homotopy category.
\end{thm}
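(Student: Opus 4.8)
The plan is to prove the two assertions in turn: existence of a tower $\{M_I\}_I$ with the stated pro-isomorphism property, and its uniqueness in $\proP ho(\Sp)$. The inputs beyond formal pro-category manipulations are the periodicity theorem of Hopkins--Smith (existence, and asymptotic uniqueness, of $v_i$-self maps on type-$i$ finite spectra) and the nilpotence theorem of Devinatz--Hopkins--Smith (a self map of a finite spectrum that induces zero on every Morava $K$-theory is nilpotent). I will also use the following standard fact about pro-categories of triangulated categories: if a level map $f\co \{x_\alpha\}\to\{y_\alpha\}$ has levelwise cofibres $\{\mathrm{cofib}(f_\alpha)\}_\alpha$ whose transition maps become zero after finitely many stages, then $f$ is an isomorphism of pro-objects --- because for any pro-object $Y$ the filtered colimit of the groups $[\mathrm{cofib}(f_\alpha), y_\beta]$ (and of $[\Sigma^{-1}\mathrm{cofib}(f_\alpha), y_\beta]$) vanishes, so the long exact sequences give $f^*\co \Hom_{\proP{\cal C}}(\{y_\alpha\},Y)\xrightarrow{\cong}\Hom_{\proP{\cal C}}(\{x_\alpha\},Y)$ and one applies the Yoneda lemma.

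\emph{Existence.} Set $M^{(0)}=\mb S$; inductively, given a finite spectrum $M^{(j)}_{(i_0,\dots,i_{j-1})}$ of type $j$, choose a $v_j$-self map $v_j^{i_j}$ on it (periodicity theorem) and let $M^{(j+1)}_{(i_0,\dots,i_j)}$ be its cofibre, of type $j+1$; put $M_I=M^{(n)}_I$ for $I=(i_0,\dots,i_{n-1})$, a generalized Moore spectrum of type $n$. The composites $\mb S=M^{(0)}\to M^{(1)}\to\dots\to M^{(n)}=M_I$ exhibit the system as a tower under $\mb S$, and for $I'\ge I$ the transition maps $M_{I'}\to M_I$ are assembled levelwise from the reduction maps on the successive cofibres. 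Making these transition maps coherent and composable --- so that one genuinely has a pro-object --- is the delicate part, and is exactly where asymptotic uniqueness of $v_i$-self maps enters; I would not reproduce this bookkeeping but refer to \cite[\S 4]{hovey-strickland-moravaktheory}.

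\emph{Pro-isomorphism.} The local computation is: if $W$ is a finite spectrum and $v\co\Sigma^d W\to W$ is nilpotent with $v^N\simeq 0$, and $W_k=\mathrm{cofib}(v^k)$, then $W\to\{W_k\}_k$ is a pro-isomorphism --- rotating the defining cofibre sequence identifies $\mathrm{cofib}(W\to W_k)$ with a suspension of $W$ and the induced transition maps on these with suspensions of $v$, so any $N$-fold composite vanishes and the criterion above applies. Now fix a finite $Z$ of type $\ge n$ and peel off the filtration variables one coordinate at a time. For $j<n$, the spectrum $Z\wedge M^{(j)}$ has type $\ge n>j$, so the self map $1_Z\wedge v_j^{i_j}$ induces zero on every Morava $K$-theory and hence is nilpotent by the nilpotence theorem; smashing the local computation with $Z$ (with the coordinates $i_0,\dots,i_{j-1}$ held fixed, so that it assembles) shows that $Z\wedge M^{(j)}\to\{Z\wedge M^{(j+1)}_{(\dots,i_j)}\}_{i_j}$ is a pro-isomorphism, and since the cofibre pro-object of the corresponding level map over the full product index category is again pro-null (checked by varying only $i_j$), we get $\{Z\wedge M^{(j)}\}_{(i_0,\dots,i_{j-1})}\cong\{Z\wedge M^{(j+1)}\}_{(i_0,\dots,i_j)}$. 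Composing these for $j=0,1,\dots,n-1$ and using $M^{(0)}=\mb S$ yields that the natural map $Z\to\{Z\wedge M_I\}_I$ is an isomorphism of pro-objects.

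\emph{Uniqueness.} Let $\{M_I\}_I$ and $\{N_J\}_J$ both satisfy the property, and form the levelwise smash $\{M_I\wedge N_J\}_{(I,J)}$. Each $M_I$ is a finite spectrum of type $\ge n$, so applying the property of $\{N_J\}_J$ to $Z=M_I$ shows $M_I\to\{M_I\wedge N_J\}_J$ is a pro-isomorphism for every $I$; its levelwise cofibres (with $I$ fixed) are pro-null, hence so are those of the level map $\{M_I\}_I\to\{M_I\wedge N_J\}_{(I,J)}$ over the product index, so that map is a pro-isomorphism. Symmetrically $\{N_J\}_J\to\{M_I\wedge N_J\}_{(I,J)}$ is a pro-isomorphism, and composing the first with the inverse of the second gives the desired isomorphism $\{M_I\}_I\cong\{N_J\}_J$. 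The real obstacle in all of this is not any formal step but the construction of the tower itself --- coherently selecting the $v_j$-self maps so the transition maps exist and compose associatively --- which is precisely why the statement is quoted from \cite{hovey-strickland-moravaktheory}.
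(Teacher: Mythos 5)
First, note that the paper does not prove this statement at all: it is imported directly from Hovey--Strickland \cite[4.22]{hovey-strickland-moravaktheory} and used as an external input, so there is no internal proof to compare yours against. Your sketch is, in outline, the proof from the cited source: build $M_I$ as an iterated cofiber of $v_j$-self maps, establish the pro-isomorphism by observing that $1_Z \wedge v_j^{i_j}$ is trivial on all Morava $K$-theories and hence nilpotent by the nilpotence theorem, so the cofiber pro-object is pro-zero, and obtain uniqueness by smashing two towers together. The formal steps you do write out are correct: a level map whose levelwise cofibers form a pro-zero system is a pro-isomorphism (exactness of filtered colimits plus Yoneda), the cofiber of $W \to W_k$ is $\Sigma^{dk+1}W$ with transition maps given by powers of $v$, and pro-nullity over the product index can indeed be checked with the first coordinate held fixed.

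However, as a standalone proof the proposal has a genuine gap, and you name it yourself: the existence of the tower --- choosing the $v_j$-self maps so that the $M_I$ admit compatible maps $M_{I'} \to M_I$ making $\{M_I\}_I$ an honest inverse system under $\mb S$ indexed by a directed set --- is deferred entirely to \cite{hovey-strickland-moravaktheory}. That step is the substantive content of the theorem; it needs the asymptotic uniqueness (and centrality) of $v_j$-self maps from the periodicity theorem together with a careful cofinality argument, and without it the pro-isomorphism and uniqueness arguments have nothing to apply to. Since the paper itself treats the result as a black box, this is not a defect relative to the paper, but your text should be presented as an explanation of why the theorem follows once the tower has been constructed, not as a proof of the theorem.
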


We will refer to any such tower $\{M_I\}_I$ of generalized Moore
spectra under $\mb S$ as a {\em Moore tower}.

\begin{cor}
For any Moore tower $\{M_I\}_I$, the unit map $\mb S \to
\{M_I\}_I$ is rigid {\rm (\ref{def:rigid})} in the homotopy category
of pro-spectra.
\end{cor}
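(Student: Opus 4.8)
The plan is to verify the defining condition of rigidity (Definition~\ref{def:rigid}) for the unit map $\eta\co \mb S \to \{M_I\}_I$: namely that the induced map
\[
\eta^*\co F^{weak}(\{M_I\}_I, \{M_I\}_I) \to F^{weak}(\mb S, \{M_I\}_I)
\]
is a weak equivalence in $ho(\proP{\Sp})$. By Proposition~\ref{prop:functionhomotopytype}, after taking levelwise cofibrant and fibrant models we may compute $F^{weak}(\{M_I\}_I, \{M_J\}_J)$ as the pro-object $\{\hocolim_I \mb RF_{\Sp}(M_I, M_J)\}_J$, and the unital property gives $F^{weak}(\mb S, \{M_J\}_J) \simeq \{M_J\}_J = \{\hocolim_I \mb RF_{\Sp}(\mb S, M_J)\}_J$ (the colimit over $I$ being eventually constant, or just using $F_{\Sp}(\mb S, M_J) \simeq M_J$). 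So the map $\eta^*$ is, levelwise in $J$, the map $\hocolim_I \mb RF_{\Sp}(M_I, M_J) \to \hocolim_I \mb RF_{\Sp}(\mb S, M_J) \simeq M_J$ induced by $\eta$ on the first variable.

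The key input is the algebraic/homotopical fact encoded in the Hovey--Strickland theorem just quoted: each $M_I$ is a finite spectrum of type $n$, hence dualizable, and for a cofinal collection of pairs $I \ge J$ the type of $M_I$ is at least the type of $M_J$, so the natural map $M_J \to M_J \wedge M_I$ is an isomorphism of pro-objects. Dualizing, and using that $M_I$ is dualizable so that $\mb RF_{\Sp}(M_I, M_J) \simeq DM_I \wedge^{\mb L} M_J$, the tower $\{\mb RF_{\Sp}(M_I, M_J)\}_I$ in $ho(\Sp)$ is pro-isomorphic to the tower $\{DM_I \wedge^{\mb L} M_J\}_I$. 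The point is that applying $D$ to the defining pro-isomorphism $Z \xrightarrow{\sim} \{Z \wedge M_I\}_I$ of the Moore tower (valid for $Z$ of type $\ge n$, in particular $Z = M_J$, viewing the tower in the $I$-variable) shows that the pro-system $\{DM_I\}_I$ is itself the (shifted/reindexed) Moore tower again, or at least that $\hocolim_I(DM_I \wedge^{\mb L} M_J)$ computes the same thing as $\hocolim_I(\mb S \wedge M_J) = M_J$ compatibly with $\eta^*$. Concretely: the map $\eta^*$ levelwise in $J$ is obtained by applying $\hocolim_I \mb RF_{\Sp}(-, M_J)$ to $\eta\co \mb S \to \{M_I\}_I$; since $\eta$ is smashed with $M_J$ to an isomorphism of pro-objects and $M_J$ is dualizable, the claim follows from the fact that $\mb RF_{\Sp}(-, M_J) = D(-) \wedge^{\mb L} M_J$ carries pro-isomorphisms to pro-isomorphisms and $\hocolim$ over a filtered category carries the resulting pro-isomorphism of towers to an equivalence.

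More carefully, I would argue as follows. First reduce, via Proposition~\ref{prop:maphomotopytype} and Proposition~\ref{prop:functionhomotopytype}, to identifying the homotopy type of $\eta^*$ with the levelwise-in-$J$ map $\hocolim_I \mb RF_{\Sp}(M_I, M_J) \to \hocolim_I \mb RF_{\Sp}(\mb S, M_J)$. Second, since each $M_I$ is a finite type-$n$ complex hence dualizable in $ho(\Sp)$, rewrite both sides using the natural equivalence $\mb RF_{\Sp}(x, M_J) \simeq Dx \wedge^{\mb L} M_J$; the map becomes $\hocolim_I (DM_I \wedge^{\mb L} M_J) \to \hocolim_I (\mb S \wedge^{\mb L} M_J) \simeq M_J$. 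Third, observe that $D$ takes the pro-isomorphism of the Moore tower theorem (with $Z = \mb S$) to a pro-isomorphism $\{DM_I\}_I \to \mb S$ in $ho(\Sp)$ — here one must check that $D$ applied levelwise to a pro-map which is an isomorphism of pro-objects yields an isomorphism of pro-objects; this holds because finite Spanier--Whitehead duality is an equivalence $ho(\Sp_{fin})^{op} \to ho(\Sp_{fin})$ and hence induces an equivalence of pro-categories. Smashing this pro-isomorphism with the dualizable $M_J$ and then taking $\hocolim_I$ (which sends pro-isomorphisms of towers, i.e. isomorphisms in the pro-homotopy-category, to equivalences on homotopy colimits since a pro-isomorphism of towers admits mutually inverse cofinal zig-zags realizable after passage to $\hocolim$) gives the desired equivalence, compatibly with $\eta^*$.

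The main obstacle I anticipate is the bookkeeping in step three: making precise that an isomorphism in the pro-homotopy-category of spectra, once one dualizes levelwise and smashes with a fixed dualizable object, induces an honest equivalence after taking the filtered homotopy colimit — in particular checking that $\eta^*$ really is the map induced functorially by $\eta$ under these identifications, rather than merely abstractly equivalent to it. This requires being careful that the identifications $\mb RF_{\Sp}(x,M_J) \simeq Dx \wedge^{\mb L} M_J$ and the pro-isomorphism $\{DM_I\}_I \simeq \mb S$ are natural enough to assemble, and that the $\hocolim_I$ over the (filtered, in fact cofinite directed) indexing category commutes appropriately with these equivalences — which is exactly the kind of control provided by Propositions~\ref{prop:functionobjects}, \ref{prop:functionhomotopytype}, and the standing hypothesis that filtered colimits represent homotopy colimits in $\Sp$, verified in Proposition~\ref{prop:symspec-work}. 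Once the naturality is pinned down, the conclusion that $\eta$ is rigid is immediate.
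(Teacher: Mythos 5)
Your overall strategy is the paper's: reduce via Proposition~\ref{prop:functionhomotopytype} to showing that, for each fixed target index, the map $\hocolim_I \mb RF_{\Sp}(M_I, M_J) \to M_J$ induced by the units $\mb S \to M_I$ is an equivalence, and then exploit finiteness/dualizability of the $M_I$ together with the Hovey--Strickland pro-isomorphism. That reduction and the concluding ``ind-constant, hence equivalence on $\hocolim$'' step are fine and match the paper. The problem is the key middle step. In your ``more careful'' version you invoke the Hovey--Strickland theorem with $Z = \mb S$ to get a pro-isomorphism $\{DM_I\}_I \to \mb S$. The theorem only applies to finite spectra of type $\geq n$, and $\mb S$ has type $0$; moreover the claimed conclusion is simply false: if $\{DM_I\}_I$ (or equivalently $\mb S \to \{M_I\}_I$) were pro-isomorphic to the constant object $\mb S$ in the homotopy category, the Moore tower would be pro-constant and $\mb S$ would be a retract of some torsion spectrum $M_I$, which it is not. (There is also a pro/ind slip: dualizing a pro-system levelwise produces an ind-system, so the statement would in any case be about ind-objects.) Your earlier paragraph, which applies the theorem with $Z = M_J$, is legitimate but dualizes to the ind-isomorphism $\{DM_I \wedge^{\mb L} DM_J\}_I \to DM_J$, i.e.\ a statement about $DM_J$ rather than $M_J$, so it too does not directly give what you need.

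The repair is exactly the paper's move: apply the theorem to $Z = DM_J$ (up to suspension a finite spectrum of type $n$), obtaining a pro-isomorphism $DM_J \to \{DM_J \wedge M_I\}_I$, and then dualize levelwise, using $D(DM_J \wedge M_I) \simeq DM_I \wedge^{\mb L} M_J \simeq \mb RF_{\Sp}(M_I, M_J)$ and $DDM_J \simeq M_J$, to conclude that $\{\mb RF_{\Sp}(M_I,M_J)\}_I \to M_J$ is an isomorphism of ind-objects in the homotopy category; ind-constancy then gives the equivalence after $\hocolim_I$, compatibly with the map induced by $\eta$ since all identifications are Spanier--Whitehead duality applied to the unit maps. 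So the dualization must be performed on the theorem applied to the dual of the target Moore spectrum, not to $\mb S$ (nor to $M_J$ itself); with that one substitution your argument becomes the paper's proof.
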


\begin{proof}
By Proposition~\ref{prop:functionhomotopytype} it suffices to show
that for any $I$, the natural map
\[
\hocolim_J F(M_J, M_I) \to M_I
\]
is an equivalence.

The dual of $M_I$ is still finite of type $n$, and so the natural map
\[
DM_I \to \{DM_I \wedge M_J\}_J
\]
becomes an isomorphism of pro-objects in the homotopy category.
Taking duals, we find that
\[
\{F(M_J, M_I)\}_J \to M_I
\]
becomes an isomorphism of ind-objects in the homotopy category.  In
particular, this map expresses the domain as being ind-constant in the
homotopy category, and so the induced map
\[
\hocolim_J F(M_J, M_I) \to M_I
\]
is a weak equivalence, as desired.
\end{proof}

\begin{thm}
\label{thm:mainthm}
Any Moore tower admits the structure of an $E_\infty$-algebra. 

For $n \geq 1$, let $K(n)$ and $T(n)$ denote a Morava $K$-theory of height $n$ and the
mapping telescope of a $v_n$-self-map of a type $n$ complex.  Then the
$K(n)$-local and $T(n)$-local spheres lift to the $E_\infty$-algebras 
$\{L_{K(n)}{\mb S} \wedge M_I\}_I$ and $\{L_{T(n)}{\mb S} \wedge M_I\}_I$ 
in the category of pro-spectra.
\end{thm}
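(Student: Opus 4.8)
The plan is to assemble the main theorem from the machinery already developed in the paper. First I would establish that the base category of symmetric spectra $\Sp$ (equivalently, $\Sp_{\mb S}$) satisfies all the hypotheses needed: by Proposition~\ref{prop:symspec-work} it is a proper operadic model category with cofibrant unit in which filtered colimits preserve fibrations, represent homotopy colimits, and commute with finite limits, and in which $\mb R\Map(\mb S,-)$ commutes with filtered homotopy colimits. Hence $\proP\Sp$ is an operadic model category equipped with the weak function object $F^{weak} = F_{\proP\Sp}$ of Section~\ref{sec:function-objects}, and Theorem~\ref{thm:rigiddualizable} applies in this setting.

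Next I would verify the two hypotheses of Theorem~\ref{thm:rigiddualizable} for the unit map $\eta\co \mb S \to \{M_I\}_I$. The map $\eta$ is rigid by the Corollary immediately preceding the theorem. For pro-dualizability: each $M_I$ is a finite spectrum, hence dualizable in $ho(\Sp)$, so the Moore tower $\{M_I\}_I$ is pro-dualizable by definition, and the constant pro-object $\mb S$ is pro-dualizable since $\mb S$ is dualizable. Since $\mb S$ carries a (unique, by Proposition~\ref{prop:einfty-unit}) $E_\infty$-structure as the unit, Theorem~\ref{thm:rigiddualizable}, applied with ${\cal O}$ a cofibrant $E_\infty$-operad, produces an $E_\infty$-algebra structure on $\{M_I\}_I$, compatible with $\eta$ and unique up to homotopy. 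This gives the first assertion.

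For the chromatic localizations, I would use the identifications $L_{K(n)}{\mb S} \simeq \holim_I (L_n{\mb S} \wedge M_I)$ and the analogous telescopic statement, so that the relevant pro-spectra are the levelwise smash products $\{L_{K(n)}{\mb S} \wedge M_I\}_I$ and $\{L_{T(n)}{\mb S} \wedge M_I\}_I$. These are obtained from the Moore tower $\{M_I\}_I$ by levelwise smashing with the (commutative) ring spectra $L_{K(n)}{\mb S}$ and $L_{T(n)}{\mb S}$; equivalently, they are the images of the $E_\infty$-algebra $\{M_I\}_I$ under the symmetric monoidal base-change functor $\proP\Sp \to \proP{\Sp_{L_{K(n)}{\mb S}}}$ (respectively for $T(n)$) induced levelwise by $(-) \wedge L_{K(n)}{\mb S}$. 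Since this functor is symmetric monoidal and takes $\mb S$ to the unit, and since the relevant localized sphere is again a commutative ring spectrum so the module category is an operadic model category by Proposition~\ref{prop:symspec-work}, it carries the $E_\infty$-structure along; alternatively one directly reruns Theorem~\ref{thm:rigiddualizable} over $\Sp_{L_{K(n)}{\mb S}}$ after checking rigidity and pro-dualizability there, where $\{L_{K(n)}{\mb S} \wedge M_I\}_I$ is pro-dualizable because each $L_{K(n)}{\mb S} \wedge M_I$ is dualizable over $L_{K(n)}{\mb S}$ (being the base change of a finite spectrum) and rigidity is inherited from the base via the smash.

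The main obstacle I anticipate is the bookkeeping around pro-objects: one must be careful that the relevant ``levelwise'' smash products genuinely represent the homotopy type under consideration, that the base-change functor is well-behaved enough (operadic Quillen adjunction, by Corollary~\ref{cor:operadicquillenadjunction}) to transport $E_\infty$-structures, and that the finiteness/dualizability hypotheses survive base change so that Theorem~\ref{thm:rigiddualizable} can be invoked in the localized setting. The subtle point flagged in the introduction — that an $E_\infty$-structure on a pro-object is a map of operads into its endomorphism operad and does \emph{not} give honest structure maps of pro-objects — means the argument must stay within the operadic framework of Sections~\ref{sec:operads}--\ref{sec:pro-objects} rather than attempting to build multiplication maps by hand.
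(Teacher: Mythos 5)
Your proof of the first assertion is essentially the paper's: rigidity of $\mb S \to \{M_I\}_I$ plus pro-dualizability feed into Proposition~\ref{prop:dualizableadjoint} and the rigid-lifting machinery (the paper invokes Corollary~\ref{cor:rigid-einfty} directly, tailored to the unit map, rather than Theorem~\ref{thm:rigiddualizable}, but this is the same argument). For the second assertion you take a genuinely heavier route than the paper. The paper simply smashes the now-$E_\infty$ Moore tower levelwise with the constant pro-objects $L_{K(n)}\mb S$ and $L_{T(n)}\mb S$, which are strictly commutative ring spectra, inside the symmetric monoidal category of pro-spectra, and observes that the homotopy inverse limit is still the localized sphere; no passage to module categories is needed. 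Your base-change argument works, but note two points of care: Proposition~\ref{prop:operadicendomorphisms} transports operad actions along the lax symmetric monoidal \emph{right} adjoint, whereas you push forward along the left adjoint, so you are implicitly using that levelwise $(-)\wedge L_{K(n)}\mb S$ is a strong symmetric monoidal simplicial functor (true here, but not literally what Corollary~\ref{cor:operadicquillenadjunction} gives), and you must then forget back to pro-spectra to land in the category the theorem refers to; and your alternative of rerunning Theorem~\ref{thm:rigiddualizable} over $\Sp_{L_{K(n)}\mb S}$ requires reproving rigidity there, which does follow by smashing the pro-isomorphism $DM_I \to \{DM_I \wedge M_J\}_J$ with the localized sphere but is an extra verification the paper's route avoids. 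Finally, your appeal to $L_{K(n)}\mb S \simeq \holim_I(L_n\mb S\wedge M_I)$ needs the standard identification $L_{K(n)}\mb S \wedge M_I \simeq L_n\mb S \wedge M_I$ for type~$n$ finite complexes (or, more directly, $\holim_I(L_{K(n)}\mb S \wedge M_I) \simeq L_{K(n)}(L_{K(n)}\mb S) \simeq L_{K(n)}\mb S$). What your approach buys is functoriality of the construction in the base ring, at the cost of machinery; what the paper's buys is brevity, using only that the smash of an $E_\infty$-pro-object with a constant commutative ring object is again $E_\infty$.
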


\begin{proof}
As Moore towers are pro-dualizable, application of
Corollary~\ref{cor:rigid-einfty} and
Proposition~\ref{prop:dualizableadjoint} yields the first statement.

By smashing a Moore tower with the constant pro-objects 
$L_{K(n)}\mb S$ and $L_{T(n)}\mb S$, each of which is 
an $E_\infty$-algebra (since localizations of $\mb S$ are 
$E_\infty$-algebras in spectra), and noting that the inverse limit is still the
$K(n)$-local or $T(n)$-local sphere, we obtain the second statement. 
\end{proof}

Since $E(k, \Gamma)$ is $K(n)$-local and an $E_\infty$-algebra in
spectra, the argument for the second part of the above theorem gives
the following result.

\begin{cor}
Let $n \geq 1$, let $k$ be any perfect field of characteristic $p$, and let 
$\Gamma$ be any height $n$ formal group law over $k$. Then 
$E(k, \Gamma)$ lifts to the $E_\infty$-algebra $\{E(k, \Gamma) \wedge M_I\}_I$ 
in the category of pro-spectra, functorially in $(k,\Gamma)$.
\end{cor}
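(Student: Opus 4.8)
The plan is to observe that the existence of the lift is exactly the argument used for the second assertion of Theorem~\ref{thm:mainthm}, applied with $E(k,\Gamma)$ in place of $L_{K(n)}\mb S$, and then to extract functoriality from the fact that that argument is natural in the ring spectrum being smashed with the Moore tower, so that precomposing with the Goerss--Hopkins--Miller presheaf produces the asserted functoriality in $(k,\Gamma)$.

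Concretely, I would first fix a cofibrant $E_\infty$-operad ${\cal E}$, a Moore tower $\{M_I\}_I$, and an ${\cal E}$-algebra structure on $\{M_I\}_I$ in $\proP\Sp$; such a structure exists by Theorem~\ref{thm:mainthm} (via Corollary~\ref{cor:rigid-einfty} and Proposition~\ref{prop:dualizableadjoint}), and the space of such structures compatible with the unit $\mb S\to\{M_I\}_I$ is contractible. By Goerss--Hopkins--Miller, $E(k,\Gamma)$ is a strict commutative symmetric ring spectrum, so the constant pro-object on $E(k,\Gamma)$ is a commutative monoid in $\proP\Sp$ and hence an ${\cal E}$-algebra by restriction along ${\cal E}\to{\rm Com}$. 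Since $\proP\Sp$ is an operadic model category and every operad of simplicial sets carries compatible diagonals ${\cal E}(m)\to{\cal E}(m)\times{\cal E}(m)$, the category of ${\cal E}$-algebras in $\proP\Sp$ is closed under the levelwise smash; therefore $\{E(k,\Gamma)\wedge M_I\}_I$ acquires an ${\cal E}$-algebra --- in particular an $E_\infty$-algebra --- structure in $\proP\Sp$. That its homotopy limit is again $E(k,\Gamma)$ follows from the formula $L_{K(n)}X\simeq\holim_I(L_nX\wedge M_I)$ of \cite[\S 2]{hovey-splittingconjecture} applied to $X=E(k,\Gamma)$, using that $E(k,\Gamma)$ is $E(n)$-local, so $L_nE(k,\Gamma)\simeq E(k,\Gamma)$, and $K(n)$-local, so $L_{K(n)}E(k,\Gamma)\simeq E(k,\Gamma)$.

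For functoriality, the point is that with ${\cal E}$ and $\{M_I\}_I$ fixed as above, the assignment $A\mapsto\{A\wedge M_I\}_I$ is a genuine functor from $\Sp_{\scriptscriptstyle{E_\infty}}$, the category of commutative symmetric ring spectra, to the category of ${\cal E}$-algebras in $\proP\Sp$: a morphism $A\to A'$ induces the level map $\{A\wedge M_I\to A'\wedge M_I\}_I$, which respects the ${\cal E}$-actions by naturality of the diagonal construction. Composing this functor with the Goerss--Hopkins--Miller presheaf $E\co{\cal{FG}}^{op}\to\Sp_{\scriptscriptstyle{E_\infty}}$ of \cite{goerss-hopkins-summary} yields a presheaf on ${\cal{FG}}$ of $E_\infty$-algebras in pro-spectra whose value at $(k,\Gamma)$ is $\{E(k,\Gamma)\wedge M_I\}_I$ and which recovers $E$ after forgetting to $\proP\Sp$ and passing to homotopy limits. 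This is the desired functorial lift.

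I expect the only real subtlety to be the interpretation of ``functorially'': the $E_\infty$-structure supplied by Theorem~\ref{thm:mainthm} is a point in a space rather than a canonical datum, so the presheaf above depends on the initial choice of ${\cal E}$ and of the structure on $\{M_I\}_I$. This is harmless because the relevant space of choices is contractible (Corollary~\ref{cor:rigid-einfty}) and any two Moore towers are isomorphic as pro-objects, so different choices give presheaves connected by a contractible space of equivalences. The remaining ingredients --- that ${\cal E}$-algebras in $\proP\Sp$ are closed under the levelwise smash, and that the constant-pro-object functor sends commutative ring spectra to ${\cal E}$-algebras --- are formal consequences of $\proP\Sp$ being an operadic model category, as established in Section~\ref{sec:pro-objects}, and involve no new homotopy theory.
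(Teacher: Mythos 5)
Your proposal is correct and follows essentially the same route as the paper: the paper proves this corollary by repeating the argument for the second half of Theorem~\ref{thm:mainthm}, namely smashing the $E_\infty$ Moore tower levelwise with the constant pro-object $E(k,\Gamma)$ (a commutative ring spectrum by Goerss--Hopkins--Miller) and using $K(n)$-locality of $E(k,\Gamma)$ to see that the homotopy limit recovers it, with functoriality coming from composing the fixed construction $A\mapsto\{A\wedge M_I\}_I$ with the presheaf $E\co{\cal{FG}}^{op}\to\Sp_{\scriptscriptstyle{E_\infty}}$. You simply make explicit the formal points (the operadic diagonal making ${\cal E}$-algebras closed under the levelwise smash, and the $E(n)$- and $K(n)$-locality used in the limit identification) that the paper leaves implicit.
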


\section{Nilpotent completions}
\label{sec:completions}

In this section, we roughly follow Bousfield \cite[\S 5]
{bousfield-spectralocalization} in defining nilpotent resolutions,
though we have been influenced by \cite{carlsson-completions} and
\cite{baker-lazarev-adamss}.

To recap assumptions, in this section the category ${\cal C}$ is
\begin{itemize}
\item an operadic model category,
\item whose monoidal structure is closed,
\item whose underlying model category is proper, and
\item whose filtered colimits preserve fibrations, realize homotopy
  colimits, and commute with finite limits.
\end{itemize}
Finally, we now add the assumption that 
\begin{itemize}
\item the underlying model category is stable.
\end{itemize}
As a result, $ho({\cal C})$ has the structure of a tensor triangulated
category.

The main example in mind is the category of modules over a
commutative symmetric ring spectrum.

\begin{defn}[{cf. \cite[3.7]{bousfield-spectralocalization}}]
  Suppose $E$ is an object in $ho({\cal C})$.
  The category $Nil(E)$ of {\em $E$-nilpotent objects} is the smallest
  subcategory of $ho({\cal C})$ containing $E$ which is closed under
  isomorphisms, cofiber sequences, retracts, and tensoring with
  arbitrary objects of $ho({\cal C})$.
\end{defn}

In other words, $Nil(E)$ is the thick tensor ideal of $ho({\cal C})$
generated by $E$, or equivalently the thick subcategory generated by
objects of the form $(E \owedge^{\mb L} x)$ for $x \in ho({\cal C})$.

\begin{defn}[{cf. \cite[5.6]{bousfield-spectralocalization}}]
For an element $E$ in $ho({\cal C})$, an {\em $E$-nilpotent resolution}
of $y \in ho({\cal C})$ is a tower $\{w_s\}_s$ of objects under $y$ such
that
\begin{enumerate}
\item $w_s$ is in $Nil(E)$ for all $s \geq 0$, and
\item for any $E$-nilpotent object $z$, the map $\hocolim_s \mb
  RF_{\cal C}(w_s,z) \to \mb RF_{\cal C}(y,z)$ is a weak
  equivalence.
\end{enumerate}
\end{defn}

\begin{rmk}
The second condition is preserved by cofiber sequences, isomorphisms,
and retracts in $z$, and so it suffices to show it for objects of the
form $(E \owedge^{\mb L} x)$ for $x \in ho({\cal C})$.
\end{rmk}

\begin{rmk}
Suppose that the category $ho({\cal C})$ has a collection of dualizable
generators $p_i$.  To check that a tower is an $E$-nilpotent
resolution, it suffices to check that the maps
\[
[p_i, \hocolim_s \mb RF_{\cal C}(w_s,z)] \to [p_i, \mb RF_{\cal C}(y,z)]
\]
are isomorphisms, or equivalently that the maps
\[
\colim_s\,[w_s,Dp_i \owedge^{\mb L} z] \to [y,Dp_i \owedge^{\mb L} z]
\]
are isomorphisms.  However, since $z$ is $E$-nilpotent, so is $Dp_i
\owedge^{\mb L} z$, and therefore it suffices to check that the map
\[
\colim_s\,[w_s,z] \to [y,z]
\]
is an isomorphism for all $E$-nilpotent $z$ as in Bousfield's
definition.
\end{rmk}

Given an $E$-nilpotent resolution $\{w_s\}_s$ of an object $y$, we can
lift it to a map in $\proP{\cal C}$ from the constant pro-object $y$ to
a representing tower $\{w_s\}_s$.  We will casually refer to
a map of towers $\{y\} \to \{w_s\}_s$ in ${\cal C}$ as an $E$-nilpotent
resolution if the domain is constant with value $y$ and the image of
the range in the homotopy category is an $E$-nilpotent resolution of
$y$.  We will view ${\cal C}$ as embedded in the category of towers in
${\cal C}$ so that we may abuse notation by writing this as a map $y \to
\{w_s\}_s$.

\begin{prop}
\label{prop:rigidresolution}
If $\eta\co y \to W$ and $\eta'\co y \to W'$ are two $E$-nilpotent
resolutions of $y$ in ${\cal C}$, then the map $F^{weak}(W,W') \to
F^{weak}(y,W')$ is a strict equivalence.  In particular, the
map $\eta$ in $\proP{\cal C}$ is rigid.
\end{prop}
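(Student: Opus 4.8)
The plan is to compute both weak function objects via Proposition~\ref{prop:functionhomotopytype} and to observe that the resulting level map is, degreewise, exactly the equivalence guaranteed by the second condition in the definition of an $E$-nilpotent resolution.

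First I would fix convenient representatives. Write $W = \{w_s\}_s$ and $W' = \{w'_t\}_t$, and choose representatives in which $W$ is levelwise cofibrant and $W'$ is levelwise fibrant; this changes neither $F^{weak}(W,W')$ nor $F^{weak}(y,W')$ up to strict equivalence. Proposition~\ref{prop:functionhomotopytype} then identifies $F^{weak}(W,W')$ with the pro-object $\{\hocolim_s \mb RF_{\cal C}(w_s,w'_t)\}_t$, and---since $y$ is the constant pro-object---identifies $F^{weak}(y,W')$ with $\{\mb RF_{\cal C}(y,w'_t)\}_t$. Under these identifications the map $\eta^*\co F^{weak}(W,W') \to F^{weak}(y,W')$ induced by $\eta$ is the level map whose component in degree $t$ is the natural comparison
\[
\hocolim_s \mb RF_{\cal C}(w_s,w'_t) \to \mb RF_{\cal C}(y,w'_t)
\]
obtained by precomposing with the structure maps $y \to w_s$.

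Next I would invoke the hypotheses. Since $W'$ is an $E$-nilpotent resolution, each $w'_t$ lies in $Nil(E)$, i.e.\ is $E$-nilpotent; and since $\{w_s\}_s$ is an $E$-nilpotent resolution of $y$, the second condition in that definition says precisely that the displayed comparison map is a weak equivalence for every $E$-nilpotent target, in particular for $w'_t$. Hence $\eta^*$ is a level map which is a weak equivalence in each degree, so it is a strict equivalence in $\proP{\cal C}$, which is the first assertion. Taking $W' = W$ in this conclusion shows that $F^{weak}(W,W) \to F^{weak}(y,W)$ is a strict equivalence, which is exactly the statement that $\eta\co y \to W$ is rigid in the sense of Definition~\ref{def:rigid}.

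I do not expect a serious obstacle: the content is entirely contained in Proposition~\ref{prop:functionhomotopytype} together with the definition of an $E$-nilpotent resolution, which was arranged with this application in mind. The only point needing a little care is the identification in the second step---that $F^{weak}(y,-)$ is computed by the constant (single-index) version of the $\hocolim$-formula and that $\eta^*$ agrees level by level with precomposition along $y \to w_s$---but this is immediate from the definitions of $F_{\proP{\cal C}}$ and of the induced map $\eta^*$.
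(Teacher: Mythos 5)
Your proposal is correct and is essentially the paper's own argument: reduce to good (levelwise cofibrant/fibrant) representatives, identify both sides via Proposition~\ref{prop:functionhomotopytype}, and note that each component map $\hocolim_s \mb RF_{\cal C}(w_s,w'_t) \to \mb RF_{\cal C}(y,w'_t)$ is an equivalence because $w'_t$ is $E$-nilpotent and $\{w_s\}_s$ is an $E$-nilpotent resolution, so the map is a levelwise (hence strict) equivalence, with rigidity obtained by taking $W'=W$. The only cosmetic difference is that the paper also takes $y$ cofibrant and the towers cofibrant-fibrant at the outset, which your sketch leaves implicit.
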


\begin{proof}
We may assume without loss of generality that $y$ is cofibrant and
that the towers $W = \{w_s\}_s$ and $W' = \{w'_t\}_t$ are levelwise
cofibrant-fibrant in $\cal C$. By Proposition
\ref{prop:functionhomotopytype}, the map $F^{weak}(W,W') \to
F^{weak}(y,W')$ is homotopy equivalent to a map of pro-objects
\[
  \{\hocolim_s F_{\cal C}(w_s,w'_t)\}_t \to \{F_{\cal C}(y,w'_t)\}_t.
\]
As each $w'_t$ is $E$-nilpotent, the maps $\hocolim_s F_{\cal C}(w_s,
w'_t) \to F_{\cal C}(y,w'_t)$ are weak equivalences.  Therefore, the
associated map of towers $F^{weak}(W,W') \to F^{weak}(y,W')$ is a
levelwise equivalence.
\end{proof}

Any two $E$-nilpotent resolutions are therefore pro-isomorphic in the
homotopy category $ho(\proP{\cal C})$.  We therefore will often 
casually refer to a map in the pro-category $y \to \comp{y}_E$, from
the constant object $y$ to an $E$-nilpotent resolution, as {\em the}
$E$-nilpotent completion of $y$.

\begin{defn}[{cf. \cite[4.8]{hovey-strickland-moravaktheory}}]
A {\em $\mu$-ring} is an object $E \in ho({\cal C})$ equipped with a map
$\mb I \to E$ and a multiplication $E \owedge^{\mb L} E \to E$ which
is left unital.
\end{defn}

\begin{prop}
\label{prop:dualizableresolution}
If the object $E$ is a $\mu$-ring in $ho({\cal C})$ and $y \in {\cal
C}$, there exists an $E$-nilpotent resolution of $y$.  If $E$ and
$y$ are dualizable, then there exists a resolution which is
pro-dualizable.
\end{prop}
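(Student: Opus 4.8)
The plan is to build the classical $E$-Adams tower of $y$ and to check it satisfies the two defining conditions of an $E$-nilpotent resolution. Since the notion of a resolution is homotopy-categorical and, as discussed above, any $E$-nilpotent resolution in $ho({\cal C})$ lifts to a map of towers in $\proP{\cal C}$, it suffices to produce the tower in $ho({\cal C})$; dualizability of its entries will then be automatic. Let $\bar E$ be the fiber of the unit map $\mb I \to E$, so there is a cofiber sequence $\bar E \to \mb I \to E$ in the tensor triangulated category $ho({\cal C})$, and for $s \geq 0$ put $w_s = \mathrm{cofib}\bigl(\bar E^{\owedge^{\mb L} (s+1)} \owedge^{\mb L} y \to y\bigr)$, the map being induced by $\bar E \to \mb I$ in each tensor factor. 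The evident maps $\bar E^{\owedge^{\mb L} (s+2)} \owedge^{\mb L} y \to \bar E^{\owedge^{\mb L} (s+1)} \owedge^{\mb L} y$ assemble $\{w_s\}_s$ into a tower $\cdots \to w_2 \to w_1 \to w_0$ under $y$, with $w_0 = E \owedge^{\mb L} y$.

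To check condition (1), smash the cofiber sequence $\bar E \to \mb I \to E$ with $\bar E^{\owedge^{\mb L} s} \owedge^{\mb L} y$ to identify the cofiber of $\bar E^{\owedge^{\mb L} (s+1)} \owedge^{\mb L} y \to \bar E^{\owedge^{\mb L} s} \owedge^{\mb L} y$ with $E \owedge^{\mb L} \bar E^{\owedge^{\mb L} s} \owedge^{\mb L} y$; the octahedral axiom then produces a cofiber sequence relating this object, $w_s$, and $w_{s-1}$. Since $E \owedge^{\mb L}(-)$ always lands in $Nil(E)$ and $Nil(E)$ is closed under cofiber sequences, an induction starting from $w_0 = E \owedge^{\mb L} y$ gives $w_s \in Nil(E)$ for all $s$.

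Condition (2) is the heart of the matter. Applying the exact functor $\mb RF_{\cal C}(-,z)$ to the cofiber sequence $\bar E^{\owedge^{\mb L} (s+1)} \owedge^{\mb L} y \to y \to w_s$ and then passing to the (exact) homotopy colimit over $s$ reduces the claim to showing $\hocolim_s \mb RF_{\cal C}\bigl(\bar E^{\owedge^{\mb L} (s+1)} \owedge^{\mb L} y, z\bigr) \simeq 0$ for $E$-nilpotent $z$; by the remark following the definition of an $E$-nilpotent resolution, it suffices to treat $z = E \owedge^{\mb L} x$. Here the $\mu$-ring hypothesis enters. Smashing the cofiber sequence $\bar E \to \mb I \to E$ with $E$ and identifying $\mb I \owedge^{\mb L} E \cong E$ yields a cofiber sequence $\bar E \owedge^{\mb L} E \to E \to E \owedge^{\mb L} E$; left-unitality of the multiplication says precisely that $\mu$ is a retraction of its second map, so that map is a split monomorphism and hence the first map $\bar E \owedge^{\mb L} E \to E$ is the zero map. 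Smashing with $x$ and using the internal-hom adjunction of $ho({\cal C})$, it follows that every structure map of the sequential diagram $\{\mb RF_{\cal C}(\bar E^{\owedge^{\mb L}(s+1)} \owedge^{\mb L} y, E \owedge^{\mb L} x)\}_s$ is zero; a sequential homotopy colimit of such a diagram is contractible, which completes condition (2). I expect this key lemma to be the main obstacle, precisely because the vanishing must be extracted from the weak $\mu$-ring structure rather than from an associative unital multiplication; the split-monomorphism observation is what makes it go through.

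For the second statement, if $E$ and $y$ are dualizable then $\bar E$ is dualizable, since the dualizable objects form a thick subcategory of $ho({\cal C})$ and are in particular closed under fibers; hence each $w_s$, built from dualizable objects by iterated tensor products and a single cofiber, is dualizable. The tower $\{w_s\}_s$, lifted to $\proP{\cal C}$ as above, is then a cofiltered diagram whose entries have dualizable image in $ho({\cal C})$, i.e. it is pro-dualizable, and it is an $E$-nilpotent resolution of $y$ by the preceding two paragraphs.
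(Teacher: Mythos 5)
Your proposal is correct and is essentially the paper's own argument: the canonical Adams tower built from the fiber $\bar E$ of the unit $\mb I \to E$, condition (2) reduced (by taking fibers and using the remark on $E$-nilpotent test objects) to the vanishing of $\hocolim_s \mb RF_{\cal C}(\bar E^{\owedge^{\mb L}(s+1)} \owedge^{\mb L} y, E \owedge^{\mb L} x)$, with the structure maps killed by left-unitality, and pro-dualizability obtained from closure of dualizable objects under cofiber sequences and tensor products. Your observation that $\bar E \owedge^{\mb L} E \to E$ is null because $\eta \owedge 1_E$ is split mono is just a repackaging of the paper's step that any map out of $\bar E^{\owedge^{\mb L} s} \owedge^{\mb L} y$ into $E \owedge^{\mb L} x$ lifts through $E \owedge^{\mb L} \bar E^{\owedge^{\mb L} s} \owedge^{\mb L} y$; you additionally verify condition (1) by the octahedral induction, which the paper leaves implicit.
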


\begin{proof}
We apply the standard techniques to provide a canonical ``Adams
resolution'' of $y$ as follows.  First form a fiber sequence $J \to
\mb I \to E$ in $ho({\cal C})$.  The maps
\[
J^{\owedge^{\mb L} (n+1)} \to \mb I \owedge^{\mb L} J^{\owedge^{\mb L}
  n} \cong J^{\owedge^{\mb L} n}
\]
construct a tower of tensor powers of $J$.  We then define
$\mb I/J^n$ as the cofiber of the composite map $J^{\owedge^{\mb L} n}
\to \mb I^{\owedge^{\mb L} n} \cong \mb I$.

For an arbitrary object $y$, to show that the tower
\[
\{(\mb I/J^n) \owedge^{\mb L} y\}_n
\]
is an $E$-nilpotent resolution of $y$, it suffices to show that for
any object $x \in ho({\cal C})$ the map
\[
\hocolim_n \mb RF_{\cal C}((\mb I/J^n) \owedge^{\mb L} y, E
\owedge^{\mb L} x) \to \mb RF_{\cal C}(y,E \owedge^{\mb L} x)
\]
is a weak equivalence.  Taking fibers, it suffices to show that
\[
\hocolim_n \mb RF_{\cal C}(J^{\owedge^{\mb L}n} \owedge^{\mb L} y, E
\owedge^{\mb L} x)
\]
is trivial.  However, the $\mu$-ring structure on $E$ implies that any
map from $J^{\owedge^{\mb L}n} \owedge^{\mb L} y$ to
$E \owedge^{\mb L} x$ automatically lifts to a map from $E
\owedge^{\mb L} J^{\owedge^{\mb L}n} \owedge^{\mb L} y$, and thus
restricts to the trivial map from $J^{\owedge^{\mb L}(n+1)} \owedge^{\mb L} y$.

Dualizable objects are closed under cofiber sequences and tensor
products \cite[2.1.3]{hovey-palmieri-strickland-axiomatic}, and so if
$E$ and $y$ are dualizable this tower is pro-dualizable.
\end{proof}

\begin{thm}
\label{thm:completionisalgebra}
Suppose that $E$ is a dualizable $\mu$-ring in $ho({\cal C})$, and $y
\in {\cal C}$ is an algebra over a cofibrant operad ${\cal O}$. Then
there exists a unique ${\cal O}$-algebra structure on the
$E$-nilpotent completion $\comp{y}_E$ which is compatible with $y$.
\end{thm}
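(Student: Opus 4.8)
The plan is to obtain this as an instance of Theorem~\ref{thm:rigidlifting}, applied in the operadic model category $\proP{\cal C}$ with its weak function object $F_{\proP{\cal C}}$ from Section~\ref{sec:function-objects}. First I would fix a convenient model for the completion, namely the canonical ``Adams tower'' $\comp{y}_E = \{(\mb I/J^n) \owedge^{\mb L} y\}_n$ produced in the proof of Proposition~\ref{prop:dualizableresolution}, where $J$ is the fiber of $\mb I \to E$. Since $E$ is dualizable, $J$ and every $\mb I/J^n$ are dualizable (dualizable objects being closed under cofiber sequences and tensor products), so the completion of the unit $\comp{\mb I}_E = \{\mb I/J^n\}_n$ is pro-dualizable, and the Adams tower factors as a levelwise tensor $\comp{y}_E \cong \comp{\mb I}_E \owedge^{\mb L} \{y\}$ of a pro-dualizable object with the constant pro-object on $y$. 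By Proposition~\ref{prop:rigidresolution} the structure map $\eta\co \{y\} \to \comp{y}_E$ in $\proP{\cal C}$ is rigid, and the constant pro-object $\{y\}$ is an ${\cal O}$-algebra (the constant embedding ${\cal C} \to \proP{\cal C}$ being symmetric monoidal).

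The substance of the argument is then to verify the remaining hypothesis of Theorem~\ref{thm:rigidlifting}: that $F_{\proP{\cal C}}$ provides adjoints for all maps
\[
(\{y\}^{\owedge n} \owedge (\comp{y}_E)^{\owedge m}) \owedge \{y\} \to \comp{y}_E
\quad\text{and}\quad
(\{y\}^{\owedge n} \owedge (\comp{y}_E)^{\owedge m}) \owedge \comp{y}_E \to \comp{y}_E.
\]
Using the factorization above, every iterated tensor occurring here has the shape $(\comp{\mb I}_E)^{\owedge k} \owedge \{y^{\owedge \ell}\}$: a pro-dualizable factor smashed with a constant pro-object. For a constant pro-object the weak-function-object adjunction collapses levelwise to the ordinary internal-hom adjunction in ${\cal C}$, with no colimit to commute past, so the constant factors cause no difficulty; the pro-dualizable factors are handled by Proposition~\ref{prop:dualizableadjoint}, using that $\mb RF_{\cal C}(d,-) \simeq Dd \owedge^{\mb L}(-)$ commutes with filtered homotopy colimits whenever $d$ is dualizable, which lets one successively ``curry'' $F_{\proP{\cal C}}$ against the dualizable factors $\comp{\mb I}_E$. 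I would organize these reductions so that each displayed map is sent by the adjunction map to an equivalence.

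Granting this, Theorem~\ref{thm:rigidlifting} shows that $\mb R\End_{\proP{\cal C}}(\eta) \to \mb R\End_{\proP{\cal C}}(\{y\})$ is an equivalence of operads, and hence that the homotopy fiber over the given class $\theta\co {\cal O} \to \mb R\End_{\proP{\cal C}}(\{y\})$ of the restriction map $\mb R\Map_{operad}({\cal O}, \mb R\End_{\proP{\cal C}}(\eta)) \to \mb R\Map_{operad}({\cal O}, \mb R\End_{\proP{\cal C}}(\{y\}))$ is contractible. A point of that fiber is exactly an ${\cal O}$-algebra structure on $\comp{y}_E$ compatible with $\eta$, so contractibility yields both existence and the asserted uniqueness; uniqueness over an arbitrary $E$-nilpotent resolution then follows because any two are pro-isomorphic in $ho(\proP{\cal C})$ (the discussion following Proposition~\ref{prop:rigidresolution}). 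I expect the adjunction verification to be the main obstacle: since $y$ is not assumed dualizable, neither $\comp{y}_E$ nor the intermediate powers $\{y\}^{\owedge n} \owedge (\comp{y}_E)^{\owedge m}$ are pro-dualizable, so Theorem~\ref{thm:rigiddualizable} does not apply directly, and the crux is that all the ``pro-infinite'' behaviour is concentrated in the pro-dualizable factor $\comp{\mb I}_E$ coming from the dualizable $\mu$-ring $E$, while the non-dualizable object $y$ only ever contributes a constant pro-object.
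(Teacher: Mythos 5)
Your overall route is the paper's own: apply Theorem~\ref{thm:rigidlifting} to $\eta\co \{y\} \to \comp{y}_E$ modeled by the canonical Adams tower of Proposition~\ref{prop:dualizableresolution}, with rigidity from Proposition~\ref{prop:rigidresolution} and the adjoint hypotheses supplied through Proposition~\ref{prop:dualizableadjoint}. The problem is the step you yourself single out as the crux. Write $w_\alpha = y^{\owedge \ell} \owedge d_\alpha$ (with $d_\alpha$ a tensor of the dualizable objects $\mb I/J^{s_i}$), $v_\beta = (\mb I/J^\beta) \owedge y$ and $z_\gamma = (\mb I/J^\gamma) \owedge y$. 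The adjoints of the second kind require
\[
\Map_{\proP{\cal C}}\bigl(W \owedge \comp{y}_E, \comp{y}_E\bigr) \to \Map_{\proP{\cal C}}\bigl(W, F^{weak}(\comp{y}_E,\comp{y}_E)\bigr), \qquad W=\{w_\alpha\}_\alpha,
\]
to be an equivalence, and after the identifications of Propositions~\ref{prop:maphomotopytype} and \ref{prop:functionhomotopytype} the obstruction is commuting $\mb R\Map_{\cal C}(w_\alpha,-)$ past $\hocolim_\beta \mb RF_{\cal C}(v_\beta,z_\gamma)$, where $\beta$ runs over the index of the tower sitting in the \emph{function-object} slot. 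Constancy of the factor $\{y^{\owedge \ell}\}$ does not remove this colimit: your ``no colimit to commute past'' observation is correct only for the adjoints whose last factor is the constant $\{y\}$ (where $F_{\proP{\cal C}}(\{y\},Z)=\{F_{\cal C}(y,z_\gamma)\}_\gamma$ has no colimit). Currying away the dualizable factors $d_\alpha$ leaves exactly $\mb R\Map_{\cal C}(y^{\owedge\ell},-)$ against a filtered homotopy colimit, which does not commute for a general non-dualizable $y$; so the verification, as you describe it, fails precisely for the maps $(X^{\owedge n}\owedge Y^{\owedge m})\owedge Y \to Y$.

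The missing idea is not constancy or compactness of $y$ but the nilpotent-resolution property itself: $\{w_\alpha \owedge v_\beta\}_\beta \cong \{(\mb I/J^\beta)\owedge (w_\alpha \owedge y)\}_\beta$ is again the canonical $E$-nilpotent resolution of $w_\alpha \owedge y$ (the first clause of Proposition~\ref{prop:dualizableresolution} needs no dualizability), and each $z_\gamma$ is $E$-nilpotent; hence $\hocolim_\beta \mb RF_{\cal C}(w_\alpha \owedge v_\beta, z_\gamma) \to \mb RF_{\cal C}(w_\alpha \owedge y, z_\gamma)$ is an equivalence by the defining property of a resolution, and applying the compact unit $\mb R\Map_{\cal C}(\mb I,-)$ gives the required equivalence of mapping spaces. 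That is the argument that actually closes the case you flagged. (Note also that the paper's citation of Proposition~\ref{prop:dualizableadjoint} literally requires $X^{\owedge n}\owedge Y^{\owedge m}$ to be pro-dualizable, i.e., in effect $y$ dualizable, which is how Proposition~\ref{prop:dualizableresolution}'s second clause enters; if you retain that hypothesis your proposal coincides with the paper's proof, but your attempt to dispense with it needs the resolution-property argument above rather than the constant/dualizable factorization.)
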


\begin{proof}
  This is obtained by applying Theorem~\ref{thm:rigidlifting}, the
  hypotheses of which are verified by
  Propositions~\ref{prop:dualizableresolution},
  \ref{prop:rigidresolution}, and \ref{prop:dualizableadjoint}.
\end{proof}

\begin{rmk}
  When Theorem \ref{thm:completionisalgebra} is applied to the
  category of modules over $E(k, \Gamma)$ (where $E(k, \Gamma)$ is any
  Morava $E$-theory, as defined in Section \ref{sec:intro}) with
  dualizable $\mu$-ring given by the associated $2$-periodic Morava
  $K$-theory, we recover the $E_\infty$-structure on $E(k, \Gamma)$ in
  the category of pro-spectra.  However, this construction does not
  respect the action of the extended Morava stabilizer group $G(k,
  \Gamma) = \Aut_{\scriptscriptstyle{{\cal{FG}}}}(k, \Gamma)$,
  the automorphism group of $(k, \Gamma)$ in the category
  ${\cal{FG}}$. One could also apply this method to the smash product
  of a $\mu$-ring with $E(k,\Gamma)$, $L_{E(n)} \mb S$, $L_{K(n)} \mb
  S$, or $L_{T(n)}\mb S$.
\end{rmk}

\bibliography{masterbib}

\end{document}